
\documentclass{daj}

\usepackage{amsmath}
\usepackage{amsthm}
\usepackage{amsfonts}
\usepackage{enumerate}
\usepackage{enumitem}

\DeclareMathOperator{\im}{Im}
\DeclareMathOperator{\Id}{Id}

\def\rest#1#2{\mathchoice
              {\setbox1\hbox{${\displaystyle #1}_{\scriptstyle #2}$}
              \restrictionaux{#1}{#2}}
              {\setbox1\hbox{${\textstyle #1}_{\scriptstyle #2}$}
              \restrictionaux{#1}{#2}}
              {\setbox1\hbox{${\scriptstyle #1}_{\scriptscriptstyle #2}$}
              \restrictionaux{#1}{#2}}
              {\setbox1\hbox{${\scriptscriptstyle #1}_{\scriptscriptstyle #2}$}
              \restrictionaux{#1}{#2}}}
\def\restrictionaux#1#2{{#1\,\smash{\vrule height .8\ht1 depth .85\dp1}}_{\,#2}}

\newcommand{\NN}{\mathbb{N}}
\newcommand{\ZZ}{\mathbb{Z}}
\newcommand{\QQ}{\mathbb{Q}}

\newcommand{\RR}{\mathbb{R}}


\newtheorem{Thm}{Theorem}[section]
\newtheorem{Prop}[Thm]{Proposition}
\newtheorem{Lem}[Thm]{Lemma}
\newtheorem{Cor}[Thm]{Corollary}
\newtheorem*{Question}{Question}
\newtheorem*{Lem*}{Lemma}
\newtheorem*{Thm*}{Theorem}
\newtheorem*{Cor*}{Corollary}

\theoremstyle{definition}
\newtheorem*{Def}{Definition}

\theoremstyle{remark}
\newtheorem{Rem}[Thm]{Remark}
\newtheorem{Ex}[Thm]{Example}
\dajAUTHORdetails{%
  title = {Approximate Lattices and Meyer Sets in Nilpotent Lie Groups}, 
  author = {Simon Machado},
  plaintextauthor = {Paul Erdos, Johan Hastad, Laszlo Lovasz, and Andrew Chi-Chih Yao},
    %
    %
    %
  runningtitle = {Approximate Lattices in Nilpotent Lie Groups}, 
    %
    %
   %
}   

\dajEDITORdetails{%
   year={2020},
   number={1},
   received={7 November 2018},   
   revised={18 October 2019},    
   published={18 February 2020},  
   doi={10.19086/da.11886},       
}   

\begin{document}

\begin{frontmatter}[classification=text]


\author[sm]{Simon Machado}

\begin{abstract}
 We show that uniform approximate lattices in nilpotent Lie groups are subsets of model sets. This extends Yves Meyer's theorem about quasi-crystals in Euclidean spaces. To do so we study relatively dense subsets of simply connected nilpotent Lie groups and their logarithms. We then deduce a simple criterion for the existence of an approximate lattice in a given nilpotent Lie group.
\end{abstract}
\end{frontmatter}

\section{Introduction}
 
Approximate subgroups were defined by Terence Tao (\cite[Definition $3.8$]{MR2501249}) to generalise results from additive combinatorics such as Freiman's theorem and the sum-product phenomenon. This led to the structure theorem for finite approximate subgroups \cite[Theorem $2.10$]{MR3090256} by Emmanuel Breuillard, Ben Green and Terence Tao. However, aiming at a similar structure theorem for infinite approximate subgroups seems hopeless. The class of infinite approximate subgroups being too large, one needs to consider infinite approximate subgroups that satisfy additional assumptions.  Some results for particular classes of infinite approximate subgroups can be found in \cite{MR3438951}, \cite{MR3345797} and \cite{MR2833482}.

In \cite{bjorklund2016approximate}, Michael Bj\"orklund and Tobias Hartnick defined a new class of infinite approximate subgroups called \emph{uniform approximate lattices}: approximate subgroups of locally compact groups that are both uniformly discrete and relatively dense. They aimed at giving a non-commutative generalisation to the mathematical quasi-crystals that were defined in early works of Yves Meyer (\cite{meyer1972algebraic}). Mathematical quasi-crystals are versatile objects that find applications in many areas of mathematics and physics: algebraic numbers (\cite{meyer1972algebraic}), aperiodic order (\cite{MR3136260}) or crystallography (\cite{MR1420414}). Another interesting aspect of mathematical quasi-crystals is that they can be defined in a number of ways. In particular, one can think of quasi-crystals as approximate subgroups with additional topological properties, or in a more geometric approach by \emph{model sets} and \emph{cut-and-project schemes} that link quasi-crystals to lattices. The equivalence of these definitions (\cite[Theorem $IV$]{meyer1972algebraic}) is at the very foundation of the theory of mathematical quasi-crystals (see for instance the survey \cite{moody1997meyer}).

In the same article (\cite{bjorklund2016approximate}) Michael Bj\"orklund and Tobias Hartnick developed a non-commutative theory of model sets and cut-and-project schemes. More precisely, a model set is defined as the projection to the first factor of a certain well chosen subset of a lattice in a product $G \times H$. We refer the reader to Section \ref{definitions} below for precise definitions. Rephrased in the terminology of \cite{bjorklund2016approximate} Meyer's theorem becomes:

\begin{Thm*}[Theorem $IV$, \cite{meyer1972algebraic}]
 Let $\Lambda \subset G$ be an approximate lattice in a locally compact abelian group. Then $\Lambda$ is contained in a model set. 
\end{Thm*}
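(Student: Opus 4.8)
The plan is to realise $\Lambda$ inside an explicit cut-and-project scheme built out of $\Lambda$ itself. Write $\Gamma=\langle\Lambda\rangle\le G$ for the subgroup generated by $\Lambda$, and let $\iota\colon\Gamma\hookrightarrow G$ be the inclusion. Since $\Lambda$ is a symmetric approximate subgroup we have $\Lambda-\Lambda\subseteq\Lambda+\Lambda\subseteq F+\Lambda$ for a finite set $F$; in particular $\Gamma$ is countable and $\Lambda-\Lambda$ is uniformly discrete in $G$, being a finite union of translates of the uniformly discrete set $\Lambda$. This \emph{Meyer property} is the feature I will exploit. The goal is to produce a locally compact abelian group $H$ (the internal space), a homomorphism $\tau\colon\Gamma\to H$ with dense image, and a relatively compact window $W\subseteq H$ such that $\widetilde\Gamma:=\{(\iota(\gamma),\tau(\gamma)):\gamma\in\Gamma\}$ is a lattice in $G\times H$. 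Once this is done, injectivity of $\pi_G|_{\widetilde\Gamma}$ is automatic (it is $\iota$), and taking $W=\overline{\tau(\Lambda)}$ gives $\Lambda\subseteq\pi_G(\{\gamma\in\widetilde\Gamma:\tau(\gamma)\in W\})$, i.e. $\Lambda$ lies in the model set cut out by $W$ (after enlarging $W$ to a compact neighbourhood if one insists on nonempty interior).

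First I would build the internal space by completing $\Gamma$ in a suitable group topology. Consider the characters $\chi\in\hat G$ that are nearly trivial on $\Lambda$, i.e. the $\epsilon$-dual sets $\Lambda^\epsilon=\{\chi\in\hat G:\sup_{\lambda\in\Lambda}|\chi(\lambda)-1|\le\epsilon\}$, and equip $\Gamma$ with the topology $\mathfrak t$ whose basic neighbourhoods of $0$ are $\{\gamma:|\chi_i(\gamma)-1|<\delta,\ i\le k\}$ for finite families drawn from these duals. By construction $\Lambda$ is $\mathfrak t$-precompact, so its closure $W$ in the completion $H:=\widehat{(\Gamma,\mathfrak t)}$ is compact and $\tau\colon\Gamma\to H$ has dense image. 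The analytic heart of the argument, and the step I expect to be the main obstacle, is Meyer's \emph{harmonious} criterion: for every $\epsilon\in(0,1)$ the dual $\Lambda^\epsilon$ is relatively dense in $\hat G$. This is what simultaneously guarantees that $\mathfrak t$ is rich enough for $H$ to be locally compact and that $\widetilde\Gamma$ will be cocompact rather than merely discrete. Proving it is the genuinely hard, Fourier-analytic part: one passes to almost periodic functions and mean values associated with the relatively dense set $\Lambda$ and runs a pigeonhole–compactness argument producing relatively dense families of approximate dual characters.

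With the harmonious property in hand, the remaining verifications are comparatively soft. Discreteness of $\widetilde\Gamma$ in $G\times H$ follows by combining uniform discreteness of $\Lambda-\Lambda$ in the $G$-direction with precompactness of the window in the $H$-direction: a net $(\iota(\gamma_n),\tau(\gamma_n))\to 0$ would force the $G$-coordinates $\gamma_n\in\Lambda-\Lambda$ to be eventually $0$. Cocompactness (relative density of $\widetilde\Gamma$) follows from relative density of $\Lambda$ in $G$ together with density of $\tau(\Gamma)$ in $H$, the latter being immediate from the definition of the completion and the harmonious criterion controlling the internal coordinate. This yields the cut-and-project scheme and the containment $\Lambda\subseteq P(W)$. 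An alternative to the duality construction, worth keeping in reserve, is to realise $H$ dynamically as a factor of the hull of $\Lambda$ under the Chabauty--Fell $G$-action; the two approaches agree, but the harmonic-analytic route makes the role of the Meyer property most transparent, which is why I would organise the proof around it.
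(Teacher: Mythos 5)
Your outline is Meyer's original duality route, and as written it has a genuine gap at exactly the point you yourself flag as ``the analytic heart'': you never prove that $\Lambda$ is harmonious, i.e.\ that $\Lambda^\epsilon$ is relatively dense in $\hat G$ for every $\epsilon\in(0,1)$. This cannot be waved at with ``almost periodic functions and a pigeonhole--compactness argument''. In the standard treatments (\cite{meyer1972algebraic}, \cite{moody1997meyer}) the implication (Meyer property) $\Rightarrow$ (harmonious) is \emph{deduced} from the containment in a model set, by passing to the dual lattice in $\hat G\times\hat H$ and projecting the characters that are nearly trivial on the window; so the lemma you propose to use as your main engine is, in the usual logical organisation, a corollary of the theorem you are trying to prove, and you supply no independent argument for it. Without it the construction does not get off the ground: if the $\epsilon$-duals were not relatively dense, the topology $\mathfrak t$ would be too coarse and either $H$ would fail to be locally compact or $\widetilde\Gamma$ would fail to be a lattice. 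Two further steps are asserted rather than proved. Discreteness of $\widetilde\Gamma$: a net $(\gamma_n,\tau(\gamma_n))\to(0,0)$ has no reason to have $G$-coordinates in $\Lambda-\Lambda$; what you actually need is that $\tau^{-1}(V)\cap\Gamma\subseteq\Lambda^k$ for some neighbourhood $V$ of $0$ in $H$ and fixed $k$, and arranging that is precisely where the fine structure of the completion enters. And your justification that $\Lambda-\Lambda$ is uniformly discrete is wrong as stated --- a finite union of translates of a uniformly discrete set need not be uniformly discrete; the correct reason is that $(\Lambda-\Lambda)-(\Lambda-\Lambda)\subseteq\Lambda^4$ is discrete, so $0$ is not an accumulation point of it (Proposition \ref{Caractérisation des réseaux approximatifs uniformes} and Remark \ref{Caractérisation des ensembles uniforméments discrets}).

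For contrast, the argument this paper models itself on (Schreiber's, sketched before the proof of Theorem \ref{Théorème principal}) avoids Fourier analysis entirely: $\Gamma=\langle\Lambda\rangle$ is finitely generated and torsion-free, hence isomorphic to $\ZZ^d$, which is a lattice in $\RR^d$; the inclusion $\Gamma\hookrightarrow G$ extends to a continuous surjection $\pi:\RR^d\to G$; the abelian rigidity lemma (Lemma \ref{Lemme de rigidité abélien}) yields a linear section $s$ of $\pi$ with $s-\mathrm{id}$ bounded on $\Lambda$; and $H:=\ker\pi$ with window the closure of $(\mathrm{id}-s\circ\pi)(\Lambda)$ gives the cut-and-project scheme. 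The only nontrivial input there is the rigidity lemma, which is elementary. To salvage your plan you must either supply an independent proof of harmoniousness or run the duality construction \emph{after} the model-set structure is established, at which point it proves a different (converse) implication.
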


Michael Bj\"orklund and Tobias Hartnick naturally raised the following question:

\begin{Question}[Problem $1$, \cite{bjorklund2016approximate}]
 Are all uniform approximate lattices of locally compact second countable groups subsets of model sets?
\end{Question}

The main purpose of this article is to answer affirmatively in the particular case of nilpotent Lie groups. 
\begin{Thm}\label{Théorème principal}
 Let $\Lambda \subset G$ be a uniform approximate lattice in a simply connected nilpotent Lie group. Then there exists a unique simply connected nilpotent Lie group $H$ such that:
 \begin{enumerate}[label=(\roman*)]
  \item $\langle \Lambda \rangle$ is isomorphic as an abstract group to a lattice $\Gamma$ in $G \times H$;
  \item There is a compact neighbourhood $W_0$ of $e$ in $H$ such that $$\Lambda \subset \pi_G(\left(G \times W_0\right) \cap \Gamma).$$ 
 \end{enumerate}
 Here $\langle \Lambda \rangle$ denotes the subgroup generated by $\Lambda$ and $\pi_G$ the projection to the first factor. 
\end{Thm}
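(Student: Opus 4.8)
The plan is to realize $\langle\Lambda\rangle$ through its Mal'cev completion and to reduce the statement to a splitting assertion. First I would check that $\langle\Lambda\rangle$ is a finitely generated torsion-free nilpotent group: torsion-freeness and nilpotency are inherited from $G$, while finite generation should follow from the approximate subgroup relation $\Lambda\Lambda\subseteq F\Lambda$ (which already gives $\langle\Lambda\rangle=\langle F\rangle\Lambda$ as a set) combined with uniform discreteness and relative density of $\Lambda$. By Mal'cev's theorem $\langle\Lambda\rangle$ is then a lattice in a unique simply connected nilpotent Lie group $\tilde G$, and the inclusion $\langle\Lambda\rangle\hookrightarrow G$ extends to a unique surjective homomorphism $\phi\colon\tilde G\to G$. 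Since $\phi$ restricts to the inclusion on $\langle\Lambda\rangle$ it is injective there, so $\langle\Lambda\rangle\cap\ker\phi=\{e\}$. By Mal'cev rigidity, any group $H$ as in the statement forces $G\times H\cong\tilde G$ with $\pi_G$ corresponding to $\phi$; hence the content of (i) is exactly that $\phi$ exhibits $\tilde G$ as a direct product $G\times\ker\phi$, and we are compelled to take $H:=\ker\phi$. This already yields the uniqueness of $H$.

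Granting the splitting $\tilde G\cong G\times H$, I would set $\tau:=\pi_H|_{\langle\Lambda\rangle}$ and build the cut-and-project scheme from the lattice $\Gamma:=\{(\gamma,\tau(\gamma)):\gamma\in\langle\Lambda\rangle\}\subset G\times H$. The image $\tau(\langle\Lambda\rangle)$ is dense in $H$: the subgroup $\langle\Lambda\rangle\,(G\times\{e\})$ cannot be closed, since a discrete image in $H$ would require $\langle\Lambda\rangle\cap\ker\phi$ to be a lattice in $\ker\phi$, whereas that intersection is trivial. For condition (ii) I must show $\tau(\Lambda)$ is relatively compact in $H$, so that a compact neighbourhood $W_0\supseteq\tau(\Lambda)$ exists; this is the Meyer property and is where uniform discreteness of $\Lambda$ is used decisively — were $\tau(\Lambda)$ unbounded, discreteness of $\Gamma$ together with relative density of $\Lambda$ in $G$ would manufacture arbitrarily close pairs of points of $\Lambda$, a contradiction.

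The main obstacle is the splitting $\tilde G\cong G\times\ker\phi$, which I would establish by induction on the nilpotency class using the lower central series, with Meyer's theorem as the abelian base case. Let $Z$ denote the last nontrivial term of the lower central series of $G$, a central subgroup isomorphic to some $\RR^k$, and let $p\colon G\to G/Z=:\bar G$. The two pieces to control are $\Lambda\cap Z$ and $\bar\Lambda:=p(\Lambda)$: using Meyer's theorem applied to the abelian group $Z$, one shows that $\Lambda\cap Z$ is an approximate lattice in $Z$ and is relatively dense there, which in turn guarantees that $\bar\Lambda$ remains uniformly discrete and hence is a uniform approximate lattice in $\bar G$. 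Applying the inductive hypothesis to $\bar\Lambda$ and Meyer's theorem to $\Lambda\cap Z$ produces internal groups $\bar H$ and $H_Z$, and the task becomes to assemble these into a central extension $H$ of $\bar H$ by $H_Z$ so that $\langle\Lambda\rangle$ embeds as a lattice in $G\times H$ and the product splitting persists.

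This assembly is the technical heart, and here the study of logarithms enters: via the Baker--Campbell--Hausdorff formula the cocycle describing $\langle\Lambda\rangle$ as a central extension of $\langle\bar\Lambda\rangle$ by $\langle\Lambda\rangle\cap Z$ is given by polynomial expressions in $\log\Lambda$, and I would show that this cocycle extends compatibly to the completions, matching an $H_Z$-valued cocycle over $\bar H$. Verifying that the resulting $H$ is simply connected nilpotent, that the combined embedding of $\langle\Lambda\rangle$ into $G\times H$ is discrete and cocompact, and that the product splitting $\tilde G\cong G\times\ker\phi$ is respected at each stage, is the delicate bookkeeping; the crux is the compatibility of the central cocycles, which is precisely what the relative denseness of $\Lambda\cap Z$ and the commutator estimates on $\log\Lambda$ are designed to furnish.
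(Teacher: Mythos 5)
Your opening and closing moves match the paper: realize $\langle\Lambda\rangle$ as a lattice $\Gamma$ in its Malcev completion $\tilde G$, extend the inclusion $\langle\Lambda\rangle\hookrightarrow G$ to a surjection $\phi\colon\tilde G\to G$ by Malcev rigidity, reduce everything to the splitting $\tilde G\cong G\times\ker\phi$, and take $H=\ker\phi$. But the middle of your argument has two genuine gaps. First, your induction on the lower central series is circular as stated. You need $\Lambda\cap Z$ to be a relatively dense approximate lattice in $Z$ (so that $\bar\Lambda=p(\Lambda)$ stays uniformly discrete downstairs), and you propose to get this ``using Meyer's theorem applied to the abelian group $Z$'' --- but Meyer's theorem takes an approximate lattice as input, and relative density of $\Lambda\cap Z$ in $Z$ is exactly what has to be proved; a priori $\Lambda\cap Z$ could be finite (one should at least pass to $\Lambda^k\cap Z$, and even then the statement is nontrivial). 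In the paper the analogous facts --- that $p(\Lambda)$ is an approximate lattice in $G/[G,G]$ and that $\Lambda^2\cap[G,G]$ is an approximate lattice in $[G,G]$ --- appear as Corollary \ref{Intersection uniform approximate lattice and derived subgroup}, deduced \emph{from} the main theorem via the cut-and-project structure and a theorem of Bj\"orklund--Hartnick; they are not available as inputs to an inductive proof of it.

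Second, the ``assembly'' of $H$ as a central extension of $\bar H$ by $H_Z$ is precisely where all the difficulty lives, and you have not carried it out. Extending the $Z$-valued cocycle describing $\langle\Lambda\rangle$ from $\langle\bar\Lambda\rangle$ to a continuous cocycle on $\bar G$, up to a bounded correction, is a cohomological reformulation of the paper's rigidity theorem (Theorem \ref{Lemme de rigidité}); its proof requires the quantitative abelian extension lemma (Lemma \ref{Lemme de rigidité abélien}) applied not to $\log\Lambda$ alone but to $\log\Lambda+[\log\Lambda,\log\Lambda]$, together with the Baker--Campbell--Hausdorff word bookkeeping of Lemmas \ref{Lemme technique sur l'existence d'un mot pour somme et crochet}--\ref{Lorgarithme des ensembles relativement denses}. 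Asserting that the cocycle ``is given by polynomial expressions in $\log\Lambda$'' and ``extends compatibly'' does not establish this. Relatedly, your justification that $\tau(\Lambda)$ is relatively compact (``discreteness of $\Gamma$ together with relative density of $\Lambda$ would manufacture arbitrarily close pairs'') is not an argument: boundedness of the star-map image on $\Lambda$ is part of the conclusion of the rigidity theorem (item (iv) there), obtained from the abelian lemma, not a formal consequence of discreteness of $\Gamma$. The paper's route --- prove Theorem \ref{Lemme de rigidité} first, then obtain a section $\tilde i$ of $\phi$ and the complement $H=\ker\phi$ directly from it --- is what fills these holes; your outline defers exactly that content to unproven steps.
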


As a corollary to Theorem \ref{Théorème principal} we get a `Meyer type' theorem for all connected nilpotent Lie groups (i.e. without the assumption of simple connectedness). 

\begin{Cor}\label{Théorème de Meyer pour les groupes de Lie nilpotents}
 Let $\Lambda \subset G$ be a uniform approximate lattice in a connected nilpotent Lie group. Then there exists a simply connected nilpotent Lie group $H$ such that $\Lambda$ is a Meyer set given by a cut-and-project scheme $(G,H,\Gamma)$. 
\end{Cor}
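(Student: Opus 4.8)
The plan is to reduce to the simply connected case already settled by Theorem \ref{Théorème principal} by passing to the universal cover, and then to push the resulting cut-and-project scheme back down. First I would let $p \colon \tilde{G} \to G$ be the universal covering homomorphism; since $G$ is connected nilpotent, $\tilde{G}$ is simply connected nilpotent and $Z := \ker p$ is a discrete central subgroup, torsion-free (as a subgroup of the simply connected $\tilde{G}$) and hence isomorphic to $\ZZ^k$. I claim $\hat{\Lambda} := p^{-1}(\Lambda)$ is a uniform approximate lattice in $\tilde{G}$. Indeed, $p$ is a local homeomorphism and $Z$ is discrete, so a small enough evenly covered identity neighbourhood meeting $Z$ only at $e$ shows $\hat{\Lambda}$ is uniformly discrete; relative density and the approximate-subgroup property ($\hat\Lambda = \hat\Lambda^{-1}$, and $\hat\Lambda\hat\Lambda \subset \tilde F\hat\Lambda$ for a finite $\tilde F$ lifting the finite $F$ with $\Lambda\Lambda \subset F\Lambda$) follow from surjectivity of $p$ after lifting the relevant compact sets. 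Applying Theorem \ref{Théorème principal} to $\hat{\Lambda} \subset \tilde{G}$ yields a simply connected nilpotent $H$, an isomorphism $\langle \hat{\Lambda}\rangle \cong \hat{\Gamma}$ onto a lattice $\hat{\Gamma} \subset \tilde{G}\times H$ realised by the injective projection $\pi_{\tilde G}$, and a compact neighbourhood $W_0$ of $e$ with $\hat{\Lambda} \subset \pi_{\tilde G}\big((\tilde G \times W_0)\cap \hat\Gamma\big)$. Write $\beta \colon \langle\hat\Lambda\rangle \to H$ for the associated star map $\pi_H \circ (\pi_{\tilde G}|_{\hat\Gamma})^{-1}$, so that the window condition reads $\beta(\hat\Lambda) \subset W_0$.

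The crux of the argument is to show that $Z$ sits in $\hat\Gamma$ with trivial internal coordinate, i.e. that $\beta|_Z$ is trivial; this is exactly what keeps the internal group simply connected and spares us from having to ``unfold'' the maximal torus of $G$ into $H$. Since $e \in \Lambda$ we have $Z = p^{-1}(e) \subset \hat\Lambda \subset \langle\hat\Lambda\rangle$, and each $z \in Z$ is a period of $\hat\Lambda$ because $\hat\Lambda = p^{-1}(\Lambda)$ is right $Z$-invariant. Hence for every $z \in Z$ and every $n \in \ZZ$ the element $z^n$ lies in $\hat\Lambda$, so $\beta(z)^n = \beta(z^n) \in W_0$; as $W_0$ is compact while a simply connected nilpotent Lie group has no nontrivial element with relatively compact cyclic group (the powers of $\exp X$ run off to infinity), this forces $\beta(z) = e$. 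Therefore $z \mapsto (z,e)$ under $\langle\hat\Lambda\rangle \cong \hat\Gamma$, that is, $Z$ corresponds to the central discrete subgroup $Z \times \{e\} \subset \hat\Gamma$.

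It then remains to descend the scheme. I would set $\Gamma := \hat\Gamma / (Z\times\{e\})$ inside $(\tilde G \times H)/(Z\times\{e\}) = (\tilde G/Z)\times H = G \times H$, with $\pi_G$ induced by $p$. One checks that $\Gamma$ is a lattice in $G\times H$: it is cocompact since $(G\times H)/\Gamma = (\tilde G\times H)/\hat\Gamma$ is compact, and it is discrete because $Z\times\{e\} \subset \hat\Gamma$ and $\hat\Gamma$ is discrete. The projection $\pi_G|_\Gamma$ is injective, for if $(g,h)(Z\times\{e\})$ maps to $e$ then $g \in Z$, and since both $(g,h)$ and $(g,e)$ lie in $\hat\Gamma$ over $g$, injectivity of $\pi_{\tilde G}|_{\hat\Gamma}$ gives $h = e$; moreover $\pi_H(\Gamma) = \pi_H(\hat\Gamma)$ is dense, so $(G,H,\Gamma)$ is a genuine cut-and-project scheme. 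Finally, lifting $\lambda \in \Lambda$ to $\hat\lambda \in \hat\Lambda$ and using $\beta(\hat\lambda) \in W_0$ shows $(\lambda, \beta(\hat\lambda)) \in (G\times W_0)\cap\Gamma$, whence $\Lambda \subset \pi_G\big((G\times W_0)\cap\Gamma\big)$; as $\Lambda$ is relatively dense this exhibits it as a Meyer set for $(G,H,\Gamma)$. I expect the only genuinely delicate point to be the vanishing of $\beta$ on $Z$ in the second step; everything else is the routine verification that the universal-cover preimage is again a uniform approximate lattice and that lattices and their projections descend along the central quotient.
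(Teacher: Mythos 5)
Your proposal is correct and follows essentially the same route as the paper: lift $\Lambda$ to the universal cover, apply Theorem \ref{Théorème principal} there, observe that the star map kills $\ker p$ because a simply connected nilpotent group has no non-trivial relatively compact subgroup, and then descend the cut-and-project scheme along the central quotient. The paper merely packages the descent step as a standalone proposition about coverings (Proposition \ref{Réseaux approximatifs uniformes et revêtements}) rather than carrying it out inline, but the substance, including the key vanishing of $\beta$ on $Z$, is identical.
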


In order to prove Theorem \ref{Théorème principal} we first prove a rigidity result for group homomorphisms defined over uniform approximate lattices. In the case of lattices of nilpotent Lie groups Theorem \ref{Lemme de rigidité} reduces to the Malcev rigidity theorem (see Theorem \ref{Rigidité de Malcev}). 

\begin{Thm}\label{Lemme de rigidité}
Let $\Lambda \subset G$ be a uniform approximate lattice in a simply connected nilpotent Lie group and $\Gamma: = \langle \Lambda \rangle$ be the subgroup generated by $\Lambda$. Moreover, let $f: \Gamma \rightarrow N$ be an abstract group homomorphism from $\Gamma$ to a simply connected nilpotent Lie group $N$. Then there are unique group homomorphisms $\tilde{f}: G \rightarrow N$ and $\rho: \Gamma \rightarrow N$ such that :
\begin{enumerate}[label=(\roman*)]
 \item $\tilde{f}$ is a Lie group homomorphism;
 \item $f=\rest{\tilde{f}}{\Gamma} \cdot \rho$;
 \item The image of $\rho$ lies in the centraliser $C(\im(\tilde{f}))$ of $\im(\tilde{f})$;
 \item $\rest{\rho}{\Lambda}$ is bounded. 
\end{enumerate}

\end{Thm}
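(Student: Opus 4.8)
The plan is to induct on the nilpotency class of the target group $N$, treating the abelian case as the base and then peeling off the centre one step at a time.

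\textbf{Base case.} If $N$ is abelian then $N\cong\RR^k$ and condition (iii) is vacuous. Here I would invoke the abelian Meyer theorem (Theorem $IV$ above): $\Lambda$ is contained in a model set, so for the associated lattice $\Gamma_0\subset G\times H$ the projection $\pi_G$ restricts to an injection on $\Gamma_0$ and carries a subgroup $\hat\Gamma\subseteq\Gamma_0$ isomorphically onto $\Gamma=\langle\Lambda\rangle$, with $\pi_H(\hat\Lambda)$ contained in a compact window $W_0\subset H$. Since $\Gamma_0$ is finitely generated and free abelian, the homomorphism $f$, transported to $\hat f\colon\hat\Gamma\to\RR^k$, extends to an $\RR$-linear map $F\colon G\times H\to\RR^k$. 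Writing $F=F_G\circ\pi_G+F_H\circ\pi_H$ with $F_G,F_H$ linear, I set $\tilde f:=F_G$ (a Lie homomorphism $G\to\RR^k$) and $\rho(\gamma):=F_H(\pi_H(\hat\gamma))$. Then $f=\rest{\tilde f}{\Gamma}+\rho$, the map $\rho$ is a homomorphism, and $\rho|_\Lambda$ takes values in the compact set $F_H(W_0)$, so (i), (ii) and (iv) hold.

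\textbf{Inductive step.} Let $Z=Z(N)\cong\RR^d$ be the centre and $p\colon N\to\bar N:=N/Z$, so that $\bar N$ is simply connected nilpotent of smaller class. Applying the inductive hypothesis to $p\circ f$ yields a Lie homomorphism $g\colon G\to\bar N$ and a homomorphism $\sigma\colon\Gamma\to\bar N$ with $p\circ f=\rest{g}{\Gamma}\cdot\sigma$, $\im\sigma\subseteq C(\im g)$, and $\sigma|_\Lambda$ bounded. I would then lift $g$ to a Lie homomorphism $\tilde f\colon G\to N$ with $p\circ\tilde f=g$ and define $\rho:=(\rest{\tilde f}{\Gamma})^{-1}\cdot f$; by construction $p\circ\rho=\sigma$. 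A direct computation shows that $\rho$ is a homomorphism precisely when $\im\rho$ centralises $\im\tilde f$, so that establishing (iii) is the same as making $\rho$ multiplicative, after which (ii) is immediate. Because $\im\sigma$ centralises $\im g$, every commutator $[\rho(\gamma),\tilde f(\gamma')]$ already lies in the central $\RR^d$; what remains is to choose the lift $\tilde f$ so that these central commutators vanish and so that the $Z$-component of $\rho|_\Lambda$ stays bounded.

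\textbf{Main obstacle.} The crux is precisely this lifting through the central extension $0\to\RR^d\to N\to\bar N\to0$. First, the obstruction to the existence of a homomorphic lift $\tilde f$ of $g$ lives in $H^2(\mathfrak g;\RR^d)$; I expect it to vanish because $g$ is the continuous part of the honest homomorphism $f$, so that the pulled-back extension is already split over $\Gamma$ by $f$, and one transfers this splitting from $\Gamma$ to $G$ using relative density. Second, the residual central data—the commutator map $(\gamma,\gamma')\mapsto[\rho(\gamma),\tilde f(\gamma')]$ and the $\RR^d$-valued defect of $\rho|_\Lambda$—are handled by applying the abelian base case to $\RR^d$-valued homomorphisms together with the rigidity principle that a Lie homomorphism (or linear map) bounded on the relatively dense set $\Lambda$ must be trivial. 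That same principle yields uniqueness: if $(\tilde f_1,\rho_1)$ and $(\tilde f_2,\rho_2)$ both satisfy (i)--(iv), then $\gamma\mapsto\tilde f_2(\gamma)^{-1}\tilde f_1(\gamma)=\rho_2(\gamma)\rho_1(\gamma)^{-1}$ is bounded on $\Lambda$, which by the same induction on the class of $N$ forces $\tilde f_1=\tilde f_2$ and hence $\rho_1=\rho_2$. I anticipate that correctly organising the vanishing of these central obstructions, rather than any single computation, is the main difficulty; the Malcev rigidity theorem (Theorem \ref{Rigidité de Malcev}) is what makes the genuine-lattice analogue of each step automatic.
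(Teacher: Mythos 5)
There are two genuine gaps here, one in the base case and one in the inductive step. The base case is circular: Meyer's theorem as stated (Theorem \ref{Meyer's theorem}) applies to approximate lattices in locally compact \emph{abelian} groups, but your $\Lambda$ lives in the possibly non-abelian nilpotent group $G$ --- only the target $N$ is abelian in your base case. The assertion that $\Lambda\subset G$ is contained in a model set for non-abelian $G$ is Theorem \ref{Théorème principal}, which the paper deduces \emph{from} Theorem \ref{Lemme de rigidité}, so you cannot invoke it. The fix is to linearise the source rather than the statement: by Proposition \ref{Consequence logarithme des ensembles relativement denses}, $\log(\Lambda^k)$ for suitable $k$ is an approximate lattice in $(\mathfrak{g},+)\cong\RR^n$, and the abelian input one actually needs is Lemma \ref{Lemme de rigidité abélien} applied there. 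This is what the paper does, and it handles every nilpotency class of $N$ at once after extending $\log\circ f\circ\exp$ to a Lie ring homomorphism on the Lie ring generated by $\log(\Lambda)$ (Lemma \ref{Extension à l'anneau de Lie engendré}).

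The second gap is the central lifting in your inductive step, which you correctly identify as the crux but do not resolve. The obstruction to lifting $g\colon G\to N/Z(N)$ through $0\to\RR^d\to N\to N/Z(N)\to 0$ does not vanish for free: already the identity map $\RR^2\to\RR^2\cong H_3(\RR)/Z(H_3(\RR))$ admits no homomorphic lift to the Heisenberg group, since any lift would have abelian image projecting onto $\RR^2$, forcing the commutator pairing to vanish. Your heuristic that ``$f$ splits the pulled-back extension over $\Gamma$'' is off by the correction term $\sigma$: what $f$ splits is the pullback along $\rest{g}{\Gamma}\cdot\sigma$, not along $\rest{g}{\Gamma}$; and ``transferring a splitting from $\Gamma$ to $G$ using relative density'' is itself a rigidity statement of the same depth as the theorem being proved. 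The paper avoids the entire tower of central extensions by working in $\mathfrak{n}$: the defect $(x_1,x_2)\mapsto\tilde{\phi}([x_1,x_2])-[\tilde{\phi}(x_1),\tilde{\phi}(x_2)]$ is bilinear and $\mathcal{O}(N(x_1)+N(x_2))$ on the approximate lattice $\log(\Lambda)$, hence identically zero, and a similar boundedness argument forces $\im(\phi-\tilde{\phi})$ to commute with $\im(\tilde{\phi})$. Your uniqueness argument is essentially the paper's Lemma \ref{Egalité de deux morphismes à distance bornée sur un réseau approximatif} and is fine.
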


\medbreak

Finally, using Theorem \ref{Théorème principal} we prove a criterion for the existence of uniform approximate lattices in simply connected nilpotent Lie groups. Our result is reminiscent of Malcev's criterion for the existence of uniform lattices in simply connected nilpotent Lie groups. Recall Malcev's criterion:

\begin{Thm}[\cite{mal1949class}]\label{CNS existence d'un réseau uniforme}
 Let $G$ be a simply connected nilpotent Lie group and $\mathfrak{g}$ be its Lie algebra.  There is a uniform lattice in $G$ if and only if $\mathfrak{g}$ has a basis with rational structure constants. 
\end{Thm}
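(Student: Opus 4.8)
The plan is to prove both implications using two standard facts about a simply connected nilpotent Lie group $G$: the exponential map $\exp \colon \mathfrak{g} \to G$ is a diffeomorphism, and the Baker--Campbell--Hausdorff formula $\log(\exp X \exp Y) = X + Y + \tfrac{1}{2}[X,Y] + \cdots$ is a \emph{finite} sum of iterated brackets with rational coefficients. Transporting the group law to $\mathfrak{g}$ via $\log$ therefore turns multiplication into a polynomial map whose coefficients are rational combinations of the structure constants $c^k_{ij}$ defined by $[X_i,X_j] = \sum_k c^k_{ij} X_k$. The bracket itself is recovered as the leading term of the group commutator, since $\log\bigl(\exp X \exp Y \exp(-X)\exp(-Y)\bigr) = [X,Y] + (\text{higher brackets})$. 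The whole argument exploits this dictionary together with the elementary fact that a polynomial on $\RR^n$ taking values in $\QQ$ at every point of $\ZZ^n$ has rational coefficients.

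For the implication ``rational structure constants $\Rightarrow$ lattice'', I would first note that the terms of the lower central series $\mathfrak{g} = \mathfrak{g}^{(1)} \supset \cdots \supset \mathfrak{g}^{(s+1)} = 0$ are spanned by iterated brackets of the $X_i$ and are hence rational subspaces. I would then choose a basis of rational vectors adapted to this flag (a rational strong Malcev basis) and rescale each vector by a large integer so that all structure constants, and the finitely many denominators appearing in the truncated BCH series, become integral. With this normalisation the $\ZZ$-lattice $L = \bigoplus_i \ZZ X_i$ is closed under the BCH product, so $\Gamma := \exp(L)$ is a subgroup; using coordinates of the second kind $(t_1,\dots,t_n) \mapsto \exp(t_1 X_1)\cdots\exp(t_n X_n)$, which is a diffeomorphism $\RR^n \to G$, one checks that $\Gamma$ is discrete and that $[0,1]^n$ maps onto a compact fundamental domain, so $\Gamma$ is a uniform lattice.

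For the converse I would argue by induction on the nilpotency class $s$. Let $\mathfrak{z} = \mathfrak{g}^{(s)}$ be the last nonzero term of the lower central series, a central ideal, and $Z = \exp \mathfrak{z}$. The key structural input is that $\Gamma \cap Z$ is a lattice in $Z \cong \RR^d$ while $\Gamma Z / Z$ is a lattice in $G/Z$; this lets me equip $\mathfrak{z}$ with a rational structure coming from the full $\ZZ$-lattice $\log(\Gamma \cap Z)$ and, by the induction hypothesis, equip $\mathfrak{g}/\mathfrak{z}$ with a rational structure. Lifting a compatible Malcev system of generators $\gamma_1,\dots,\gamma_n$ of $\Gamma$ and setting $X_i = \log \gamma_i$, the group law written in these coordinates is polynomial (by BCH) and integer-valued on the integer points, which are exactly $\Gamma$; hence its coefficients are rational, and reading off the commutator term shows the $c^k_{ij}$ are rational.

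The step I expect to be the main obstacle is the structural claim in the converse that $\Gamma \cap Z$ is cocompact in $Z$ and that $\Gamma Z/Z$ is discrete and cocompact in $G/Z$ --- equivalently, that the lattice interacts well with the lower central series. Discreteness of the image in $G/Z$ is the delicate point, since a priori the projection of a discrete set need not be discrete; here it rests on the fact that a lattice in a simply connected nilpotent group is finitely generated and that its intersection with the closed connected central subgroup $Z$ is itself relatively dense in $Z$. Making this precise, and then checking that the two induced rational structures on $\mathfrak{z}$ and $\mathfrak{g}/\mathfrak{z}$ are compatible enough that the combined structure constants are genuinely rational in a single basis, is where the real work lies.
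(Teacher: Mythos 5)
The paper does not prove this statement at all: it is quoted as Malcev's classical criterion, with the proof deferred to \cite{mal1949class} (see also Raghunathan, \emph{Discrete subgroups of Lie groups}, Theorem~2.12). So there is no in-paper argument to compare against; judged on its own, your proposal is a correct outline of the standard textbook proof. The forward direction is sound: uniform rescaling of a rational basis by $N$ multiplies a $d$-fold bracket by $N^{d-1}$, which clears the finitely many BCH denominators in a nilpotent algebra, so the $\ZZ$-span is closed under the BCH product and $\exp(L)$ is a discrete cocompact subgroup (cocompactness via second-kind coordinates adapted to the flag, as you say). For the converse, you correctly isolate the real content, namely that $\Gamma\cap Z$ is a lattice in $Z$ and $\Gamma Z/Z$ a lattice in $G/Z$; be aware that this is itself a nontrivial theorem (Raghunathan, Proposition~2.17 and the Corollary to Theorem~2.3, the latter of which this paper invokes elsewhere for $[G,G]$), so your induction is only as complete as that input, and the ``compatibility of the two rational structures'' you flag is genuinely where the work sits. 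A slicker converse, closer in spirit to what this paper actually develops, avoids the induction entirely: by the word identities of Corollary~\ref{Corollaire de l'existence d'un mot pour la somme et le crochet}, sums and brackets of elements of $\log\Gamma$ land in $\frac{1}{m}\log(\Gamma^n)\subset\QQ\cdot\log\Gamma$, so the $\QQ$-span of $\log\Gamma$ is a $\QQ$-Lie subalgebra; cocompactness of $\Gamma$ forces it to span $\mathfrak{g}$ over $\RR$, and any $\QQ$-basis of it has rational structure constants. That route trades your two structural lattice facts for the single (and, in this paper, already established) observation that $\log\Gamma$ generates a rational form.
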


In the case of approximate lattices the field of interest is no longer $\QQ$ but the field of real algebraic numbers $\RR \cap \overline{\QQ}$. 

\begin{Thm}\label{Existence d'un réseau approximatif}
 Let $G$ be a simply connected nilpotent Lie group and $\mathfrak{g}$ its Lie algebra. The group $G$ contains an approximate lattice if and only if $\mathfrak{g}$ has a basis with structure constants in $\RR \cap \overline{\QQ}$.
\end{Thm}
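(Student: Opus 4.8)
The plan is to prove both implications by reducing to Malcev's criterion (Theorem \ref{CNS existence d'un réseau uniforme}) together with the model set description supplied by Theorem \ref{Théorème principal}. For the sufficiency direction, suppose $\mathfrak{g}$ has a basis with structure constants in $\RR \cap \overline{\QQ}$, and let $K \subseteq \RR \cap \overline{\QQ}$ be the number field they generate, with distinguished real embedding given by the inclusion. Let $\mathfrak{g}_K$ be the resulting $K$-Lie algebra and form the restriction of scalars $\mathfrak{q} := \mathrm{Res}_{K/\QQ}\mathfrak{g}_K$, a $\QQ$-Lie algebra whose real points decompose as $\mathfrak{q}_\RR \cong \bigoplus_{\sigma} \mathfrak{g}^{\sigma}$ over the archimedean places of $K$, the distinguished place contributing a copy of $\mathfrak{g}$ itself. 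I would write $\mathfrak{q}_\RR = \mathfrak{g}\oplus\mathfrak{h}$ with $\mathfrak{h}$ the sum of the remaining conjugates, let $G\times H$ be the simply connected nilpotent Lie group integrating this algebra, and use the rationality of $\mathfrak{q}$ to produce, via Malcev, a uniform lattice $\Gamma \le G\times H$ coming from an order in $\mathfrak{g}_K$. It then remains to check the two cut-and-project conditions: $\pi_G|_\Gamma$ is injective because the distinguished embedding is injective on $K$, and $\pi_H(\Gamma)$ is dense in $H$ by the standard density of the non-distinguished conjugates of a number-field lattice. For a compact symmetric window $W_0 \subseteq H$ the model set $\pi_G((G\times W_0)\cap \Gamma)$ is then a uniform approximate lattice in $G$.

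Conversely, suppose $\Lambda \subseteq G$ is a uniform approximate lattice. I would apply Theorem \ref{Théorème principal} to obtain the simply connected nilpotent group $H$, the lattice $\Gamma \le G\times H$ isomorphic to $\langle\Lambda\rangle$, and the inclusion $\Lambda \subseteq \pi_G((G\times W_0)\cap \Gamma)$. Since $\Gamma$ is a lattice in the simply connected nilpotent group $G\times H$, Malcev's theorem gives a $\QQ$-structure $\mathfrak{q}$ on $\mathfrak{g}\oplus\mathfrak{h}$, i.e. a basis with rational structure constants for which $\Gamma$ has rational Malcev coordinates. The next step is to translate the two defining features of the scheme into rationality statements: injectivity of $\pi_G$ on $\Gamma$ gives $\mathfrak{h}\cap\mathfrak{q}_\QQ = 0$ (the ideal $\mathfrak{h} = \mathrm{Lie}(H)$ contains no nonzero rational vector), while the minimality of $H$ in Theorem \ref{Théorème principal} ensures $\pi_H(\Gamma)$ is dense, equivalently that $\mathfrak{g} = \mathrm{Lie}(G)$ is contained in no proper $\QQ$-subspace of $\mathfrak{q}_\RR$. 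In particular $\pi_G$ restricts to an injective Lie algebra homomorphism $\mathfrak{q}_\QQ \hookrightarrow \mathfrak{g}$ with dense image, so $\mathfrak{g}$ contains a dense finitely generated $\QQ$-Lie subalgebra isomorphic to $\mathfrak{q}_\QQ$.

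It then remains to deduce that $\mathfrak{g}$ itself has structure constants in $\RR \cap \overline{\QQ}$, and this is where I expect the main difficulty. Fixing a $\QQ$-basis of $\mathfrak{q}_\QQ$ and selecting from its dense image a real basis of $\mathfrak{g}$, the structure constants of $\mathfrak{g}$ become explicit $\QQ$-linear combinations of the coordinates expressing the remaining images in this basis; equivalently they are determined by the position of the real ideal $\mathfrak{h}$. These coordinates satisfy the $\QQ$-polynomial equations encoding that $\mathfrak{h}$ is an ideal, but those equations alone cut out a positive-dimensional $\QQ$-variety and do not by themselves force algebraicity. The heart of the matter is therefore to show that the injectivity and density conditions rigidify $\mathfrak{h}$: concretely, that $\mathfrak{h}$ must be a sum of Galois conjugates of $\mathfrak{g}$ inside $\mathfrak{q}_\RR$, mirroring the restriction-of-scalars picture of the sufficiency proof. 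I would attempt this by a Galois-descent argument, conjugating the rational Malcev coordinates of $\Gamma$ by $\sigma \in \mathrm{Gal}(\overline{\QQ}/\QQ)$ and comparing the resulting embeddings by means of the rigidity theorem (Theorem \ref{Lemme de rigidité}), so as to see that the isomorphism type of $\mathfrak{g}$ is permuted among finitely many conjugates and that its structure constants consequently have finite Galois orbit, hence lie in $\RR\cap\overline{\QQ}$. Making this descent precise — in particular controlling the transcendence of the embedding data and ruling out continuous deformation of $\mathfrak{h}$ within the variety of ideals — is the principal obstacle, and it is the only point where the weaker field $\RR\cap\overline{\QQ}$, rather than $\QQ$, genuinely enters.
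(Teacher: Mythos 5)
Your sufficiency argument is essentially the paper's: both take the number field $K$ generated by the structure constants, form the Weil restriction $\mathrm{Res}_{K/\QQ}$, identify its real points with $G\times H$, invoke Malcev to get a uniform lattice there, and read off a cut-and-project scheme. (The paper is slightly more careful at one point: rather than asserting injectivity of $\pi_G$ on $\Gamma$ directly, it quotients $H$ by the Zariski closure of $\Gamma\cap H$ to force it; you should do the same or justify injectivity for your specific order.) This direction is fine.

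The necessity direction, however, contains a genuine gap, and you have located it yourself: everything after ``the heart of the matter is therefore to show that the injectivity and density conditions rigidify $\mathfrak{h}$'' is a plan, not a proof. Your proposed mechanism --- Galois descent showing that $\mathfrak{h}$ must be a sum of Galois conjugates of $\mathfrak{g}$ inside $\mathfrak{q}_\RR$ --- is not established, and it is not clear it is even the right statement: Theorem \ref{Théorème principal} gives you \emph{some} complement $\mathfrak{h}$ with $\mathfrak{g}\oplus\mathfrak{h}$ rational, and nothing in that theorem ties $\mathfrak{h}$ to the Galois orbit of $\mathfrak{g}$ (the conjugate embeddings need not even be real). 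The paper closes this gap by a completely different and purely algebraic route that never mentions Galois theory. Starting from the same point --- $\mathfrak{g}\oplus\mathfrak{h}\simeq\mathfrak{k}(\RR)$ with $\mathfrak{k}$ a $\QQ$-Lie algebra --- it decomposes $\mathfrak{k}(\RR\cap\overline{\QQ})$ into indecomposable $(\RR\cap\overline{\QQ})$-Lie algebras $\mathfrak{k}_1\oplus\cdots\oplus\mathfrak{k}_r$ via a Krull--Schmidt theorem for Lie algebras (Proposition \ref{Decomposition en indecomposables}), then shows each $\mathfrak{k}_i(\RR)$ is still indecomposable over $\RR$: decomposability is expressed by a system of polynomial equations with coefficients in $\RR\cap\overline{\QQ}$, and since $\RR\cap\overline{\QQ}$ and $\RR$ are both real closed fields, the Tarski transfer principle says solvability over one is equivalent to solvability over the other. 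Uniqueness in Krull--Schmidt then forces $\mathfrak{g}\simeq\bigoplus_{i\in I}\mathfrak{k}_i(\RR)$ for some subset $I$, which exhibits the desired basis with structure constants in $\RR\cap\overline{\QQ}$. This is exactly the ingredient your sketch is missing --- it is where ``$\RR\cap\overline{\QQ}$ is real closed'' does the work, replacing the descent and rigidity-of-ideals argument you flag as the principal obstacle. As written, your necessity proof is incomplete.
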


Let us give a quick overview of the proof strategy for Theorems \ref{Théorème principal}, \ref{Lemme de rigidité} and \ref{Existence d'un réseau approximatif}. Starting with Theorem \ref{Lemme de rigidité}, we first use the Baker-Campbell-Hausdorff formula to reduce the proof to showing a similar result on Lie algebras. Theorem \ref{Lemme de rigidité} is then a consequence of a well-known result from the abelian theory (see Lemma \ref{Lemme de rigidité abélien}).

Moving on to the proof of Theorem \ref{Théorème principal}, we follow a proof strategy due to Jean-Pierre Schreiber in his proof of \cite[Theorem $2$]{schreiber1973approximations} (see also \cite{moody1997meyer} for a more compact exposition). In order to follow this strategy we rely on the theory of simply connected nilpotent Lie groups developed by Anatoly Malcev in \cite{mal1949class}. We start by studying the Malcev completion of the subgroup generated by the approximate subgroup $\Lambda$. We then show, using Theorem \ref{Lemme de rigidité}, that the Malcev completion is the direct product $G \times H$ we are looking for.

Finally, to prove Theorem \ref{Existence d'un réseau approximatif} we rely on the fact that, according to Theorems \ref{Théorème principal} and \ref{CNS existence d'un réseau uniforme}, the Lie algebra of $G$ is a direct factor of a Lie algebra with rational structure constants. Then we use structure results about Lie algebras and algebraic groups to conclude. The tools are hence fairly different from those used in the previous proofs. 
 
 \section{Definitions}\label{definitions}
  \subsection{Uniform Approximate Lattices}
 
 Although approximate lattices can be defined in greater generality (see \cite[Section $4$]{bjorklund2016approximate}), we will focus on \emph{uniform} approximate lattices. 
 
 First of all, let us fix notations. For subsets $X$ and $Y$ of a group $G$ and $n$ a positive integer, define 
 \begin{align*}
  X\cdot Y:=\left\{xy|  x \in X, y \in Y \right\},  \qquad\qquad\qquad\qquad\\
  X^n:=\left\{x_1\cdots x_n | x_1,\ldots,x_n \in X \right\}  \text{ and } X^{-1}:=\left\{x^{-1} | x \in X \right\}.
 \end{align*}
 We can now define approximate subgroups. 
 
 \begin{Def}[Definition $3.8$, \cite{MR2501249}]
 A subset $\Lambda$ of group $G$ is called an \emph{approximate subgroup} if $\Lambda^{-1}=\Lambda$, the identity element belongs to $\Lambda$ and there is a finite subset $F \subset G$ such that $\Lambda \cdot \Lambda \subset F \cdot \Lambda $. 
 \end{Def}
 
 \begin{Def}
  A subset $X$ of a group $G$ is said to be \emph{symmetric} if $X=X^{-1}$.
 \end{Def}

 A uniform approximate lattice is an approximate subgroup satisfying further topological conditions.
 
 \begin{Def}
  A subset $X$ of a locally compact group is:
  \begin{itemize}
   \item a \emph{relatively dense} set if there is a compact set $K \subset G$ such that $\Lambda \cdot K = G$;
   \item a \emph{uniformly discrete} set if there is a compact neighbourhood $K$ of the identity element in $G$ such that $\forall g \in G, |g\cdot K \cap \Lambda|\leq 1$;
   \item a \emph{Delone} set if it is both relatively dense and uniformly discrete. 
  \end{itemize}

 \end{Def}
 
 \begin{Rem}\label{Caractérisation des ensembles uniforméments discrets}
  Equivalently, $X$ is uniformly discrete if and only if $e$ is not an accumulation point in $X^{-1}\cdot X$.  
 \end{Rem}

 We are now able to define uniform approximate lattices.
 
 \begin{Def}
  A subset $\Lambda$ of a locally compact group $G$ is a \emph{uniform approximate lattice} if:
  \begin{enumerate}[label=(\roman*)]
   \item $\Lambda$ is an approximate subgroup;
   \item $\Lambda$ is a Delone set. 
  \end{enumerate}

 \end{Def}

\begin{Ex}
\begin{enumerate}
 \item  All uniform lattices are uniform approximate lattices.
 
 \item According to a result of de Bruijn, the set of vertices of the Penrose rhombus tiling $P3$ is a uniform approximate lattice in the plane, see \cite{deBruijn1981algebraicI} and \cite{deBruijn1981algebraicII}. 
 
 \item If $\gamma \in \overline{\QQ}$ is a Pisot number, set $X$ as $\{\sum\limits_{i \in I}\gamma^i| I \subset \NN, |I| <+ \infty\}$, then $Y:= X \cup (- X)$  is a uniform approximate lattice in $\RR$, see  \cite[8.2]{meyer1972algebraic} .
\end{enumerate}

\end{Ex}

The following statement gives a handy characterisation of uniform approximate lattices.

\begin{Prop}[Proposition $2.9$, \cite{bjorklund2016approximate}]\label{Caractérisation des réseaux approximatifs uniformes}
Let $\Lambda$ be a relatively dense symmetric subset of a locally compact group $G$ containing the identity element. Then the following statements are equivalent: 
\begin{enumerate}[label=(\roman*)]
 \item $\Lambda$ is a uniform approximate lattice;
 \item $\Lambda^k:=\{\lambda_1\cdots\lambda_k | \lambda_1,\ldots,\lambda_k \in \Lambda \}$ is discrete for all $k \in \NN$:
 \item $\Lambda^6$ is discrete;
 \item For every compact subset $K \subset G$, the set $K \cap \Lambda^3$ is finite. 
\end{enumerate}
\end{Prop}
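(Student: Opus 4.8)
The plan is to establish the four statements by proving the cyclic chain $(i)\Rightarrow(ii)\Rightarrow(iii)\Rightarrow(iv)\Rightarrow(i)$. Throughout I will use the standing hypotheses that $\Lambda=\Lambda^{-1}$, that $e\in\Lambda$, and that $\Lambda$ is relatively dense, so that there is a compact set $K$ with $\Lambda\cdot K=G$; enlarging $K$ I may assume $e\in K$. The only genuinely substantial implication will be $(iv)\Rightarrow(i)$, where I must manufacture a finite set witnessing the approximate subgroup property; the remaining steps are bookkeeping built on the observation that a uniformly discrete subset of $G$ is closed and on the characterisation of uniform discreteness recorded in Remark~\ref{Caractérisation des ensembles uniforméments discrets}.

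For $(i)\Rightarrow(ii)$, I would first note that the approximate subgroup property $\Lambda\cdot\Lambda\subset F\cdot\Lambda$ propagates: by induction $\Lambda^k\subset F_k\cdot\Lambda$ for a finite set $F_k$, since $\Lambda^{k+1}=\Lambda^k\cdot\Lambda\subset F_k\cdot F\cdot\Lambda$. Because $\Lambda$ is uniformly discrete, it is closed and discrete (otherwise a convergent sequence in $\Lambda$ would force $e$ to accumulate in $\Lambda^{-1}\cdot\Lambda$). Each translate $f\cdot\Lambda$ is then closed and discrete, a finite union of such sets is again closed and discrete, and a subset of a discrete set is discrete; hence $\Lambda^k$ is discrete. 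The implication $(ii)\Rightarrow(iii)$ is immediate by taking $k=6$.

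For $(iii)\Rightarrow(iv)$, I exploit the symmetry of $\Lambda$. Applying it to $X:=\Lambda^3$ gives $X^{-1}\cdot X=(\Lambda^3)^{-1}\cdot\Lambda^3=\Lambda^3\cdot\Lambda^3=\Lambda^6$. Since $\Lambda^6$ is discrete, $e$ is isolated in $X^{-1}\cdot X$, so by Remark~\ref{Caractérisation des ensembles uniforméments discrets} the set $\Lambda^3$ is uniformly discrete: there is a neighbourhood $U$ of $e$ with $|g\cdot U\cap\Lambda^3|\leq 1$ for every $g\in G$. Given a compact $K\subset G$, cover it by finitely many translates $g_1U,\dots,g_mU$; each meets $\Lambda^3$ in at most one point, so $K\cap\Lambda^3$ is finite, which is $(iv)$.

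The heart of the argument is $(iv)\Rightarrow(i)$. Uniform discreteness of $\Lambda$ follows quickly: choosing a compact neighbourhood $K_0$ of $e$, the set $K_0\cap\Lambda^2\subset K_0\cap\Lambda^3$ is finite by $(iv)$, so $e$ is not an accumulation point of $\Lambda^{-1}\cdot\Lambda=\Lambda^2$, and Remark~\ref{Caractérisation des ensembles uniforméments discrets} applies. It remains to produce a finite $F$ with $\Lambda\cdot\Lambda\subset F\cdot\Lambda$, and this is where I expect the main difficulty to lie. The trick is to convert the right relative density $G=\Lambda\cdot K$ into a left one via symmetry: $G=G^{-1}=(\Lambda\cdot K)^{-1}=K^{-1}\cdot\Lambda$. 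Now set $F:=\Lambda^3\cap K^{-1}$, which is finite by $(iv)$ since $K^{-1}$ is compact. For any $x\in\Lambda^2$, write $x=k\cdot\lambda$ with $k\in K^{-1}$ and $\lambda\in\Lambda$; then $x\cdot\lambda^{-1}=k\in K^{-1}$ and also $x\cdot\lambda^{-1}\in\Lambda^2\cdot\Lambda=\Lambda^3$, so $x\cdot\lambda^{-1}\in F$ and $x\in F\cdot\Lambda$. Hence $\Lambda\cdot\Lambda\subset F\cdot\Lambda$, so $\Lambda$ is an approximate subgroup which is also Delone, completing the cycle.
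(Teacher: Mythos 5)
Your proposal is correct; note that the paper itself gives no proof of this statement, importing it from \cite{bjorklund2016approximate}, and your cyclic argument $(i)\Rightarrow(ii)\Rightarrow(iii)\Rightarrow(iv)\Rightarrow(i)$ is the standard one found there: closedness and discreteness of a uniformly discrete set propagated through $\Lambda^k\subset F_k\cdot\Lambda$, the symmetry identity $(\Lambda^3)^{-1}\cdot\Lambda^3=\Lambda^6$ combined with Remark~\ref{Caractérisation des ensembles uniforméments discrets}, and the finite set $F=\Lambda^3\cap K^{-1}$ extracted from the left form $G=K^{-1}\cdot\Lambda$ of relative density. The only cosmetic caveat is that in a general (possibly non-metrizable) locally compact group the sequence in your parenthetical justification of ``uniformly discrete implies closed'' should be a net, which changes nothing.
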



\begin{Rem}
 Without a relative density assumption, only the implications $(i)\Rightarrow(ii)\Rightarrow(iii)\Rightarrow(iv)$ hold. Indeed, for every positive integer $k$ there is a symmetric set $X \subset \RR$ such that  $0 \in X$, and $\underbrace{X+\ldots + X}_{k-\text{terms}}$ satisfies $(iv)$ whereas $\underbrace{X+\ldots + X}_{k+1-\text{terms}}$ does not. 

Set $Y:=\{k^n + \frac{1}{k^n} | n \geq 1\}$ and $X:= Y \cup (-Y)\cup \{0\}$. A quick computation shows that a sum of $k$ elements of $X$ is equal to zero or is greater than $k - 1$ in absolute value, while $\frac{1}{k^{n+1}} - \frac{1}{k^{n-1}} \in \underbrace{X+\ldots + X}_{k+1-\text{terms}}$ for all $n \geq 1$. 
\end{Rem}

\begin{Cor}[Corollary $2.10$, \cite{bjorklund2016approximate}]\label{relatively dense subset of a uniform approximate lattice}
 Let $\Lambda \subset G$ be a uniform approximate lattice in a locally compact group. A symmetric subset containing the identity of $\Lambda$ is a uniform approximate lattice if and only if it is relatively dense.
\end{Cor}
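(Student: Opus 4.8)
The plan is to read off both implications from the characterisation of uniform approximate lattices in Proposition~\ref{Caractérisation des réseaux approximatifs uniformes}. Write $X$ for the symmetric subset of $\Lambda$ with $e \in X$, so that $X = X^{-1}$ and $X \subset \Lambda$.

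The forward implication is immediate and requires no real argument: a uniform approximate lattice is by definition a Delone set, and every Delone set is relatively dense. So if $X$ is a uniform approximate lattice it is in particular relatively dense.

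For the converse, assume $X$ is relatively dense. Since $X$ is symmetric, contains the identity and is relatively dense, Proposition~\ref{Caractérisation des réseaux approximatifs uniformes} is applicable to $X$, and so it suffices to verify one of its equivalent discreteness conditions; I would use condition $(iv)$. First I would observe that $\Lambda$, being a uniform approximate lattice, is itself relatively dense, so Proposition~\ref{Caractérisation des réseaux approximatifs uniformes} applies to $\Lambda$ and gives that $K \cap \Lambda^3$ is finite for every compact $K \subset G$. The key elementary observation is the monotonicity $X^3 \subset \Lambda^3$, which follows from $X \subset \Lambda$; hence $K \cap X^3 \subset K \cap \Lambda^3$ is finite for every compact $K$. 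This is exactly condition $(iv)$ of the proposition for $X$, and together with symmetry, $e \in X$ and relative density it yields that $X$ is a uniform approximate lattice. (One could equally use condition $(ii)$, noting that $X^k \subset \Lambda^k$ is a subset of the discrete set $\Lambda^k$ and is therefore discrete.)

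There is essentially no obstacle here: the entire nontrivial content—that the relatively dense discrete set $X$ is genuinely an approximate subgroup—is already packaged inside Proposition~\ref{Caractérisation des réseaux approximatifs uniformes}, and the inclusion $X^3 \subset \Lambda^3$ lets the finiteness we already know for $\Lambda$ descend to $X$ for free. The only point requiring care is bookkeeping: before quoting the proposition for $X$ one must make sure all three of its standing hypotheses (symmetry, containing $e$, and relative density) are verified for $X$, the first two being given and the third being precisely the hypothesis of this direction.
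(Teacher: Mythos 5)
Your argument is correct. The paper does not prove this corollary itself (it is imported from Bj\"orklund--Hartnick as their Corollary $2.10$), but your deduction is exactly the intended one: the forward direction is immediate from the Delone condition, and for the converse you correctly verify all three standing hypotheses of Proposition~\ref{Caractérisation des réseaux approximatifs uniformes} for $X$ before invoking the equivalence, transferring condition $(iv)$ from $\Lambda$ to $X$ via the inclusion $X^3 \subset \Lambda^3$.
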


For convenience, we mention yet another result proved in $\cite{bjorklund2016approximate}$.

\begin{Prop}[Theorem $4.25$, \cite{bjorklund2016approximate}]
 Every approximate lattice (as defined in \cite[Section $4$]{bjorklund2016approximate}) in a locally compact second countable nilpotent group is a uniform approximate lattice.  
\end{Prop}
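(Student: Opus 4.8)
The definition from \cite[Section~$4$]{bjorklund2016approximate} relevant here is that an \emph{approximate lattice} is a uniformly discrete approximate subgroup $\Lambda$ of the lcsc group $G$ whose hull $\Omega_\Lambda$ --- the closure of the set of translates $\{g\Lambda : g \in G\}$ in the Chabauty--Fell space of closed subsets of $G$ --- carries a $G$-invariant probability measure $\nu$ of positive intensity (the latter being the analogue of the finite-covolume condition for genuine lattices). Since $\Lambda$ is already assumed to be a uniformly discrete approximate subgroup, the only property separating it from a uniform approximate lattice is \emph{relative density}: a uniform approximate lattice is precisely a Delone approximate subgroup, so it suffices to prove that $\Lambda$ is relatively dense. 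The plan is therefore to extract a density bound from $\nu$ and to convert it into relative density using the amenability of nilpotent groups.

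First I would show that $\Lambda$ has positive upper Banach density in $G$. Here I use that $G$, being nilpotent and lcsc, is amenable and unimodular, so it admits a tempered two-sided F\o{}lner sequence $(F_n)$ with respect to Haar measure. The intensity of $\nu$ is by definition the $\nu$-expected number of points of a configuration per unit Haar volume, and it is positive; averaging the point-counting functional over $(F_n)$ and applying the pointwise ergodic theorem for amenable groups (Lindenstrauss), this intensity equals the mean density of $\nu$-almost every configuration. Since the orbit $\{g\Lambda : g\in G\}$ is dense in the support of $\nu$, this positivity is inherited by $\Lambda$ itself, so that the upper Banach density $d^{*}(\Lambda)$ is positive.

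The second step passes from positive density to relative density. Because $\Lambda$ is symmetric we have $\Lambda^{2}=\Lambda\Lambda^{-1}$, and by the amenable form of the Steinhaus--F\o{}lner theorem (the density theorem of Beiglb\"ock--Bergelson--Fish for countable amenable groups, in its lcsc version) a set of positive upper Banach density has relatively dense product set $\Lambda\Lambda^{-1}$; thus $\Lambda^{2}$ is relatively dense, say $\Lambda^{2}K=G$ for some compact $K$. Using symmetry once more, the approximate subgroup condition gives a finite set $F'$ with $\Lambda^{2}\subset\Lambda F'$, whence $G=\Lambda^{2}K\subset\Lambda(F'K)$ with $F'K$ compact, so $\Lambda$ is relatively dense. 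Combined with uniform discreteness this exhibits $\Lambda$ as a Delone approximate subgroup, i.e.\ a uniform approximate lattice.

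The main obstacle I anticipate is the first step: ensuring that the density recorded by the hull measure is genuinely positive and is inherited by $\Lambda$ rather than escaping to infinity. In groups of exponential growth or with ``cusps'' (for instance semisimple ones) invariant measures on hulls can concentrate on ever-sparser configurations, which is exactly how non-uniform lattices arise; it is the polynomial growth of nilpotent groups that forbids this escape of mass and pins the intensity of $\nu$ to a positive density of $\Lambda$. Making the ``no escape of mass'' rigorous --- controlling the point-counting averages uniformly along the F\o{}lner sequence by means of the growth bounds together with the self-similarity $\Lambda^{2}\subset F'\Lambda$ --- is the technical heart of the argument; once it is in place, the amenable density theorem closes the proof.
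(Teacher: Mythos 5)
First, note that the paper offers no proof of this Proposition at all: it is quoted verbatim from Bj\"orklund--Hartnick (their Theorem $4.25$) and used as a black box, so there is no internal argument to compare yours against. Judged on its own terms, your reduction is correct in outline --- a uniform approximate lattice is exactly a Delone approximate subgroup, uniform discreteness and the approximate subgroup property are part of the hypothesis, so relative density is the whole content --- and the first half is sound: a $G$-invariant probability measure on the punctured hull automatically has positive (and, by uniform discreteness, finite) intensity, since the intensity measure is $G$-invariant and hence a multiple of Haar measure, and positive density passes from $\nu$-generic configurations to $\Lambda$ itself by lower semicontinuity of point counts on open sets in the Chabauty--Fell topology. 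So the ``escape of mass'' you worry about in your last paragraph is not where the difficulty lies.

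The genuine gap is in the Steinhaus/F\o{}lner step. Since $\Lambda$ has Haar measure zero you must thicken it, say to $A=V\Lambda$ for a small compact symmetric neighbourhood $V$ of $e$, before any Haar-density argument applies; the maximal-disjoint-translates argument then yields $G=F\cdot AA^{-1}=F\,V\Lambda^{2}V$ (or a mirror image of this), i.e.\ a covering of the form $G=C\,\Lambda\,K$ with compact sets on \emph{both} sides, or with a compact set trapped between two copies of $\Lambda$. In an abelian group these compacta slide to one side and you are done; in a noncommutative group $C\Lambda K=G$ does \emph{not} formally imply $\Lambda K'=G$, because rewriting $c\lambda k=\lambda(\lambda^{-1}c\lambda)k$ produces conjugates $\lambda^{-1}C\lambda$ that are unbounded as $\lambda$ ranges over $\Lambda$ (already in the Heisenberg group). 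The approximate subgroup property only absorbs finite sets arising as $\Lambda^{k}\subset\Lambda F_{k}$, not a compact factor sitting on the wrong side of $\Lambda$. This is exactly where nilpotency must do real work beyond amenability and unimodularity, and your sketch supplies no mechanism for it; as written, the same argument would ``prove'' the statement for every amenable unimodular lcsc group, which is not what Bj\"orklund and Hartnick claim. (A smaller issue: their Section $4$ definition asks for an invariant, resp.\ stationary, probability measure on the punctured hull rather than ``positive intensity'' as a primitive; for nilpotent, hence amenable, $G$ this discrepancy is harmless.)
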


From now on, when dealing with nilpotent groups we will say `approximate lattices' instead of `uniform approximate lattices'.

\subsection{Model sets and Meyer sets}

The following scheme is the main tool to build uniform approximate lattices.

\begin{Def}
 A \emph{cut-and-project scheme} is a triple $(G,H,\Gamma)$ such that $G$ and $H$ are locally compact groups,  $\Gamma < G \times H$ is a lattice and, when restricted to $\Gamma$, the projection $\pi_G$ to $G$ is injective and the projection $\pi_H$ to $H$ has dense image. 
 Hence, we can define a group homomorphism $\tau : \pi_{G}(\Gamma) \rightarrow H$ by $$\tau := \pi_H \circ \left(\rest{\pi_G}{\Gamma}\right)^{-1}.$$ The map $\tau$ is sometimes called the \emph{star map}. 
\end{Def}

Given a cut-and-project scheme $(G,H,\Gamma)$ we are able to build a whole family of uniform approximate lattices in $G$. Indeed, choose a compact neighbourhood $W_0$ of the identity element in $H$ and define:
$$ P_0(G,H,\Gamma,W_0): = \pi_G((G \times W_0) \cap \Gamma) = \tau^{-1}(W_0). $$

\begin{Prop}[Proposition $2.13$, \cite{bjorklund2016approximate}]\label{Les ensembles modèles sont des réseaux approximatifs uniformes}
 Denote $P_0(G,H,\Gamma,W_0)$ by $P_0$. Then: 
 \begin{enumerate}
  \item$P_0$ and $P_0^{-1}P_0$ are uniformly discrete;
  \item there is a finite subset $F \subset G$ such that $P_0^2 \subset F\cdot P_0$, and thus if $W_0$ is symmetric and contains the identity, $P_0$ is an approximate subgroup;
  \item if $\Gamma$ is a uniform lattice, then $P_0$ is relatively dense;
  \item if $P_0$ is relatively dense, then $\Gamma$ is a uniform lattice.
 \end{enumerate}

\end{Prop}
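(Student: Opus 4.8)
The plan is to handle the four assertions separately, each time translating a topological property of $P_0 \subseteq G$ into a statement about the lattice $\Gamma \subseteq G \times H$ through the identity $P_0 = \pi_G((G\times W_0)\cap\Gamma) = \tau^{-1}(W_0)$, and exploiting repeatedly that $\Gamma$ is discrete and closed while $\pi_H(\Gamma)$ is dense. For assertion (1), the key observation is that if $x=\pi_G(\gamma_1)$ and $y=\pi_G(\gamma_2)$ with $\gamma_1,\gamma_2\in(G\times W_0)\cap\Gamma$, then $x^{-1}y=\pi_G(\gamma_1^{-1}\gamma_2)$ has $H$-coordinate in the compact set $W_0^{-1}W_0$; hence $P_0^{-1}P_0\subseteq\pi_G((G\times W_0^{-1}W_0)\cap\Gamma)$. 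By Remark~\ref{Caractérisation des ensembles uniforméments discrets} it then suffices to check that $e$ is not an accumulation point of this set: a putative sequence $\gamma_n=(g_n,k_n)\in\Gamma$ with $g_n\to e$, $g_n\neq e$ and $k_n\in W_0^{-1}W_0$ would, after passing to a convergent subsequence $k_n\to k$, converge to $(e,k)\in\Gamma$; since $\Gamma$ is discrete this forces $\gamma_n$ eventually constant with $G$-coordinate $e$, i.e. $g_n=e$ eventually, a contradiction. Since $e\in P_0\subseteq P_0^{-1}P_0$, both sets are uniformly discrete.

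For assertion (2), the same coordinate computation gives $P_0^2\subseteq\tau^{-1}(W_0^2)$ with $W_0^2$ compact, so it is enough to cover the preimage $\tau^{-1}(K)$ of a compact set $K$ by finitely many left translates of $P_0=\tau^{-1}(W_0)$. Here I would use that $W_0$ has non-empty interior together with density of $\pi_H(\Gamma)$: the family $\{h\,\mathring W_0 : h\in\pi_H(\Gamma)\}$ is then an open cover of $H$, from which compactness extracts finitely many $h_i=\tau(f_i)$ covering $K$. Setting $F=\{f_1,\dots,f_m\}$ and using that $\tau$ is a homomorphism, any $x$ with $\tau(x)\in h_iW_0$ satisfies $\tau(f_i^{-1}x)\in W_0$, i.e. $x\in f_iP_0$; thus $\tau^{-1}(K)\subseteq F\cdot P_0$. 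When in addition $W_0=W_0^{-1}$ and $e_H\in W_0$, the equalities $P_0^{-1}=P_0$ and $e\in P_0$ are immediate, giving the approximate-subgroup conclusion.

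I would prove assertion (4) before (3). Assuming $P_0L=G$ for a compact $L$, I bring an arbitrary $(g,h)\in G\times H$ into a fixed compact set modulo $\Gamma$ in two steps. First, density of $\pi_H(\Gamma)$ yields $\gamma_1\in\Gamma$ such that $\gamma_1^{-1}(g,h)=:(g',h')$ has $h'\in W_0$. Second, relative density gives a factorisation $g'=p\ell$ with $p\in P_0$ and $\ell\in L$; choosing $\gamma_2=(p,s_2)\in\Gamma$ with $s_2\in W_0$ we get $\gamma_2^{-1}(g',h')=(\ell,s_2^{-1}h')\in L\times(W_0^{-1}W_0)$. Hence $(g,h)\in\Gamma\cdot(L\times W_0^{-1}W_0)$, a fixed compact set, so $\Gamma$ is cocompact and therefore a uniform lattice.

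The step I expect to be the main obstacle is assertion (3): relative density of $P_0$ starting from cocompactness of $\Gamma$. The difficulty is that cocompactness only confines both coordinates of a translate $\gamma^{-1}(g,h^*)$ to some compact box $A\times B\subseteq G\times H$, and $B$ need not lie inside the window $W_0$; one must correct the $H$-coordinate into $W_0$ using density. Concretely I would fix $h^*\in H$ and write $(g,h^*)=\gamma' c$ with $c\in A\times B$, so that the $H$-coordinate $s'$ of $\gamma'$ ranges over the fixed compact set $h^*B^{-1}$ while $(\pi_G(\gamma'))^{-1}g\in A$; then I cover $h^*B^{-1}$ by finitely many right translates $\mathring W_0\,h_i$ with $h_i=\tau(f_i)\in\pi_H(\Gamma)$ and replace $\gamma'$ by $\gamma'\phi_i^{-1}$, where $\phi_i=(f_i,h_i)\in\Gamma$, to push its $H$-coordinate into $W_0$. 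The delicate point, and the reason to track left versus right translates carefully in this non-abelian setting, is to choose the side of multiplication by $\phi_i$ so that the new $G$-coordinate $y=\pi_G(\gamma'\phi_i^{-1})\in P_0$ still satisfies $y^{-1}g\in f_iA$, keeping $g$ within the finite union $\bigcup_{i=1}^m f_iA$, the compact set realising relative density.
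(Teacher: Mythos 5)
The paper offers no proof of this proposition: it is imported verbatim from Bj\"orklund and Hartnick (\cite{bjorklund2016approximate}, Proposition $2.13$), so there is nothing in-paper to compare your argument against. That said, your proof is correct and is essentially the standard cut-and-project argument: the windowing inclusions $P_0^{-1}P_0\subseteq \tau^{-1}(W_0^{-1}W_0)$ and $P_0^2\subseteq\tau^{-1}(W_0^2)$, discreteness of $\Gamma$ against a compact window for $(1)$, a finite subcover by translates $h_i\mathring{W}_0$ with $h_i\in\pi_H(\Gamma)$ for $(2)$, and correct left/right bookkeeping in $(3)$ and $(4)$ (multiplying $\gamma'$ on the right by $\phi_i^{-1}$ is indeed the side that keeps $y^{-1}g\in f_iA$). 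The one step you should tighten is the last line of $(1)$: ``since $e\in P_0\subseteq P_0^{-1}P_0$, both sets are uniformly discrete'' points the wrong way --- a superset of a uniformly discrete set need not be uniformly discrete, and by Remark~\ref{Caractérisation des ensembles uniforméments discrets} uniform discreteness of $P_0^{-1}P_0$ requires that $e$ not be an accumulation point of $(P_0^{-1}P_0)^{-1}(P_0^{-1}P_0)$, not merely of $P_0^{-1}P_0$ itself. This is not a genuine gap, since your accumulation-point argument applies verbatim to $\tau^{-1}(K)$ for any compact $K\subseteq H$; either run it once more with the window $(W_0^{-1}W_0)^{-1}(W_0^{-1}W_0)$, or state the general claim that $\tau^{-1}(K)$ is uniformly discrete for every compact $K$ and apply it to both windows. (If you also want the statement for locally compact groups that are not second countable, replace the convergent-sequence step by the observation that $(V\times K)\cap\Gamma$ is finite for $V\subseteq G$ and $K\subseteq H$ compact, which yields the conclusion without extracting subsequences.)
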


\begin{Def}
 For a cut-and-project scheme $(G,H,\Gamma)$ and a compact neighbourhood of the identity $W_0 \subset H$, the set $P_0(G,H,\Gamma,W_0)$ is called a \emph{model set}. Moreover, any relatively dense subset of a model set is called a \emph{Meyer set}.
\end{Def}

\begin{Rem}
 According to Proposition \ref{Les ensembles modèles sont des réseaux approximatifs uniformes} model sets are uniform approximate lattices whenever $W_0$ is symmetric and $\Gamma$ is uniform. Moreover, under the same conditions Meyer sets are uniform approximate lattices according to Corollary \ref{relatively dense subset of a uniform approximate lattice}.
\end{Rem}

Conversely, we can state Meyer's theorem as follows.

\begin{Thm}[Theorem $IV$, \cite{meyer1972algebraic}]\label{Meyer's theorem} In compactly generated locally compact abelian groups, all approximate lattices are Meyer sets.
\end{Thm}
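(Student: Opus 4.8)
The plan is to produce, for a uniform approximate lattice $\Lambda$ in a compactly generated locally compact abelian group $G$, an explicit cut-and-project scheme $(G,H,\widetilde{\Gamma})$ together with a compact window $W_0 \subset H$ such that $\Lambda \subseteq P_0(G,H,\widetilde{\Gamma},W_0)$. Since $\Lambda$ is already relatively dense in $G$, it is then relatively dense in the model set containing it, hence a Meyer set. This is the abelian prototype of Theorem~\ref{Théorème principal}, and I would follow the same pattern used there, going back to Schreiber~\cite{schreiber1973approximations} (see also \cite{moody1997meyer}): realise the group generated by $\Lambda$ as a finitely generated group, pass to its real completion, and split off the internal group $H$ using the rigidity of abelian homomorphisms.

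First I would show that $\Gamma := \langle \Lambda \rangle = \bigcup_k \Lambda^k$ is finitely generated. By the structure theorem for compactly generated locally compact abelian groups we may write $G \cong \RR^n \times \ZZ^m \times K$ with $K$ compact; the essential case is $G = \RR^n$, the factor $\ZZ^m$ being absorbed via $\ZZ^m \subset \RR^m$ and the compact factor $K$ contributing only to the internal group and to a finite torsion part, so I assume $G = \RR^n$. The finite generation comes from a telescoping argument: for $\lambda \in \Lambda$, interpolate between $0$ and $\lambda$ by points spaced a fixed small distance apart, approximate each by an element of $\Lambda$ using relative density, and express $\lambda$ as a sum of consecutive differences, each lying in $\Lambda^2 = \Lambda - \Lambda$ and in a fixed compact set $C$. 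By Proposition~\ref{Caractérisation des réseaux approximatifs uniformes} the set $\Lambda^2 \cap C \subseteq \Lambda^3 \cap C$ is finite, so $\Gamma$ is generated by $\Lambda^2 \cap C$. Being a finitely generated torsion-free abelian group, $\Gamma \cong \ZZ^r$, and the inclusion $\iota \colon \ZZ^r \hookrightarrow \RR^n$ is given by a real matrix $A$ which is injective on $\ZZ^r$ and has $\iota(\ZZ^r) = \Gamma \supseteq \Lambda$ relatively dense in $\RR^n$, forcing $A$ to have rank $n$.

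Then the rigidity required is the elementary fact that $\iota$ is the restriction to $\ZZ^r$ of the $\RR$-linear map $A \colon \RR^r \to \RR^n$, the abelian analogue of Theorem~\ref{Lemme de rigidité}. Put $H_0 := \ker A \cong \RR^{r-n}$, fix a complement $E_u$ of $\ker A$ in $\RR^r$, and let $B \colon \RR^r \to H_0$ be the projection along $E_u$. Then $(A,B) \colon \RR^r \to \RR^n \times H_0$ is a linear isomorphism carrying $\ZZ^r$ onto a lattice $\widetilde{\Gamma}$, on which $\pi_G$ is injective. After replacing $H_0$ by the minimal quotient $H$ in which the image of $B(\ZZ^r)$ is dense, $(G,H,\widetilde{\Gamma})$ becomes a cut-and-project scheme. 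Granting that $B(\Lambda)$ is bounded (the crux, below), I may take $W_0$ to be a compact neighbourhood of $0$ containing $\overline{B(\Lambda)}$, whence $\Lambda \subseteq \tau^{-1}(W_0) = P_0(G,H,\widetilde{\Gamma},W_0)$, a model set by Proposition~\ref{Les ensembles modèles sont des réseaux approximatifs uniformes}.

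The point on which everything hinges, and which I expect to be the main obstacle, is that $B(\Lambda)$ is bounded in $H$: this is condition (iv) of Theorem~\ref{Lemme de rigidité} in the abelian setting, and it is exactly the Pisot phenomenon of the example $Y = X \cup (-X)$ of the introduction, where the internal coordinate is the Galois conjugate $\gamma'$ and boundedness follows from $|\gamma'| < 1$. It is not automatic from linear algebra, since a generic complement sends $\Lambda$ to a dense, unbounded subset of $H$; the correct $B$ must exploit the approximate-subgroup structure. Writing $S := B(\Lambda)$, the relation $\Lambda + \Lambda \subseteq F + \Lambda$ gives $S + S \subseteq S + B(F)$ with $B(F)$ finite and $S = -S \ni 0$, so $S$ is itself an approximate subgroup of $H$ whose generated subgroup $B(\ZZ^r)$ is dense; together with the fact that the $G$-component $A(\Lambda)$ is uniformly discrete and relatively dense, one must deduce that $S$ is bounded. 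I would prove this the way Meyer originally did, by harmonic analysis: one shows $\Lambda$ is \emph{harmonious}, i.e. that for every $\varepsilon > 0$ the set of $\chi \in \widehat{G}$ with $\sup_{\lambda \in \Lambda} |\chi(\lambda) - 1| \le \varepsilon$ is relatively dense in $\widehat{G}$, via a Fourier and pigeonhole argument resting on the finiteness of $\Lambda^3 \cap C$ from Proposition~\ref{Caractérisation des réseaux approximatifs uniformes} and on relative density. Harmoniousness says precisely that every abstract character of $\Gamma$ not induced from $\widehat{G}$ is small on $\Lambda$, which is the boundedness of the internal component; this Fourier step is the technical heart, the abelian precursor of the Malcev-completion and Baker--Campbell--Hausdorff machinery used in the nilpotent case.
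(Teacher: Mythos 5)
First, a point of orientation: the paper does not prove Theorem \ref{Meyer's theorem} at all --- it is quoted as Theorem IV of \cite{meyer1972algebraic}, and the only proof-like material in the paper is the remark that Schreiber's four-step strategy applies in the abelian case with steps (i), (ii), (iv) elementary and step (iii) supplied by Lemma \ref{Lemme de rigidité abélien}. Your skeleton --- realise $\langle\Lambda\rangle\cong\ZZ^r$, extend the inclusion to a surjective linear map $A:\RR^r\to\RR^n$, split $\RR^r$ as $\RR^n\times\ker A$, and take a compact window containing the internal projection of $\Lambda$ --- is exactly that strategy, and you correctly isolate the boundedness of $B(\Lambda)$ as the only non-elementary point. (A small mislabelling: extending $\iota$ to $A$ is the abelian analogue of Malcev rigidity, Theorem \ref{Rigidité de Malcev}, i.e.\ step (ii); the abelian analogue of Theorem \ref{Lemme de rigidité} is Lemma \ref{Lemme de rigidité abélien}, and it is what step (iii) requires.)

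The gap is at that crux. You ``fix a complement $E_u$'' and then ``grant'' that $B(\Lambda)$ is bounded, while conceding two sentences later that for a generic complement this is false; indeed two projections onto $\ker A$ differ by $L\circ A$ for some linear $L:\RR^n\to\ker A$, so their restrictions to $\Lambda$ differ by $L|_\Lambda$, which is unbounded unless $L=0$. The complement therefore cannot be fixed in advance: it must be produced by the argument, and the tool that produces it is precisely Lemma \ref{Lemme de rigidité abélien} applied to the isomorphism $j:\Gamma\to\ZZ^r\subset\RR^r$. That lemma gives a linear $\tilde\jmath:\RR^n\to\RR^r$ with $j-\tilde\jmath$ bounded on $\Lambda$; since $A\circ\tilde\jmath-\Id_{\RR^n}$ is linear and bounded on the relatively dense set $\Lambda$, it vanishes, so $\tilde\jmath$ is a linear section of $A$; taking $E_u=\tilde\jmath(\RR^n)$ yields $B=\Id-\tilde\jmath\circ A$ and $B\circ j=j-\tilde\jmath$, bounded on $\Lambda$. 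Your substitute --- harmoniousness --- is Meyer's original route and can be made to work, but your reduction of it (``harmoniousness says precisely \dots\ the boundedness of the internal component'') is not precise: harmoniousness concerns $\TT$-valued characters of $\Gamma$ being $\varepsilon$-approximable on $\Lambda$ by continuous characters of $G$, and passing from that to boundedness of the $\RR^{r-n}$-valued homomorphism $B\circ j$ (choosing approximating frequencies linearly in the dual parameter and upgrading bounds modulo $\ZZ$ to genuine bounds) is the several-page heart of \cite{meyer1972algebraic}; as written it is a placeholder, and it re-derives what the paper has already packaged as Lemma \ref{Lemme de rigidité abélien} (Moody \cite{moody1997meyer}, Prop.\ 8.6). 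A secondary, also unfinished, point is the reduction from $G\cong\RR^n\times\ZZ^m\times K$ to $\RR^n$: absorbing $\ZZ^m$ into $\RR^m$ preserves relative density, but you must then convert a cut-and-project scheme over $\RR^{n+m}$ back into one over $\RR^n\times\ZZ^m$, and the compact factor $K$ has to be moved into the internal group (the projection of $\Lambda$ away from $K$ is not obviously uniformly discrete; one uses finiteness of $(\Lambda-\Lambda)\cap K$). These steps are standard but not one-liners.
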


\section{Approximate Lattices in Nilpotent Lie Groups}

In this section we will show Theorem \ref{Lemme de rigidité} and Theorem \ref{Théorème principal}. 

Recall that we consider lattices in simply connected nilpotent Lie groups. Let $G$ be such a group and let $\mathfrak{g}$ be its Lie algebra. The exponential map $\exp:\mathfrak{g} \rightarrow G$ is then a diffeomorphism. Denote by $\log$ its inverse. The group structure on $G$ and the Lie algebra structure on $\mathfrak{g}$ are linked by the Baker-Campbell-Hausdorff formula:
$$\log(\exp X\exp Y) =
\sum_{n = 1}^\infty\frac {(-1)^{n-1}}{n}
\sum_{\begin{smallmatrix} r_1 + s_1 > 0 \\ \vdots \\ r_n + s_n > 0 \end{smallmatrix}}
\frac{[ X^{r_1} Y^{s_1} X^{r_2} Y^{s_2} \dotsm X^{r_n} Y^{s_n} ]}{\sum_{i = 1}^n (r_i + s_i) \cdot \prod_{i = 1}^n r_i! s_i!},
$$ where the sum is performed over all non-negative values of $s_i$ and $r_i$ and:

$$ [ X^{r_1} Y^{s_1} \dotsm X^{r_n} Y^{s_n} ] := [ \underbrace{X,[X,\dotsm[X}_{r_1} ,[ \underbrace{Y,[Y,\dotsm[Y}_{s_1} ,\,\dotsm\, [ \underbrace{X,[X,\dotsm[X}_{r_n} ,[ \underbrace{Y,[Y,\dotsm Y}_{s_n} ]]\dotsm]].$$

\subsection{Logarithms of  Approximate Lattices}

Thanks to the Baker-Campbell-Hausdorff formula we will be able to express the sum and Lie bracket of two elements of the Lie algebra $\mathfrak{g}$ using the logarithm map and products. 

\begin{Def}
 Let $n \in \NN$ and $w \in F_n$, an element of the free group of rank $n$ with $S=\{s_1,\ldots,s_n\}$ a set of generators. For any group $G$ and elements $g_1,\ldots,g_n \in G$ we denote by $w(g_1,\ldots,g_n)$ the image of $w$ by the only group homomorphism $f : F_n \rightarrow G$ such that $s_i \mapsto g_i$ for $i \in \{1,\ldots,n\}$. This is called a \emph{word in $n$ letters}. 
\end{Def}

As a consequence of the Baker-Campbell-Hausdorff formula, we get the following statement.

\begin{Lem}\label{Lemme technique sur l'existence d'un mot pour somme  et crochet}
 For $c \in \NN$, there exist words in two letters $w_c,w'_c$ and natural numbers $m_c,m'_c$, depending only on $c$, such that if $N$ is a simply connected nilpotent Lie group of nilpotency class $\leq c$, and $x,y \in N$ then:
 $$ m_c(\log(x) + \log(y)) = \log(w_c(x,y)) \text{ and } m'_c([\log(x),\log(y)])=\log(w'_c(x,y)).$$

\end{Lem}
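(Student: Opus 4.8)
The plan is to reduce the statement to a single universal computation in the free nilpotent Lie algebra of class $c$ on two generators and then transport the resulting identity to an arbitrary $N$. Let $L$ be the free nilpotent Lie algebra of class $c$ over $\RR$ on generators $X,Y$ (its bracket polynomials have rational coefficients), and let $\mathcal{G}$ be the simply connected nilpotent Lie group with Lie algebra $L$, i.e. $L$ equipped with the BCH product. Writing $x_0=\exp X$, $y_0=\exp Y$, any group word $w$ gives $\log(w(x_0,y_0))\in L$. I claim it suffices to produce words $w_c,w'_c$ and integers $m_c,m'_c>0$ with
$$\log(w_c(x_0,y_0)) = m_c(X+Y), \qquad \log(w'_c(x_0,y_0)) = m'_c[X,Y]$$
in $L$. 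Indeed, given any simply connected nilpotent $N$ of class $\leq c$ with Lie algebra $\mathfrak{n}$ and any $x,y\in N$, the universal property of $L$ yields a Lie algebra homomorphism $\Phi\colon L\to\mathfrak{n}$ with $\Phi(X)=\log x$, $\Phi(Y)=\log y$; the induced Lie group homomorphism $\phi\colon\mathcal{G}\to N$ satisfies $\phi\circ\exp=\exp\circ\,\Phi$ and $\phi(x_0)=x$, $\phi(y_0)=y$. Applying $\phi$ to the two identities and using that $\phi$ is a homomorphism gives precisely the asserted identities in $N$.

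Next I set up the induction. Grade $L=\bigoplus_{k=1}^{c}L_k$ by bracket length and put $L^{(k)}:=\bigoplus_{j\geq k}L_j$, an ideal. The two facts I will extract from the BCH formula are the following. First, for $g,h\in\mathcal{G}$ with $\log h\in L^{(k)}$, $k\geq2$, one has
$$\log(gh)\equiv \log(g)+\log(h)\pmod{L^{(k+1)}},$$
since every correction term in the BCH expansion involves a bracket with $\log h$ and hence lands in $L^{(k+1)}$; in particular such a right multiplication leaves the components of $\log(g)$ in degrees $<k$ untouched and merely adds $\log(h)$ in degree $k$. Second, a repeated BCH computation on the group commutator $[a,b]_{\mathrm{grp}}:=a^{-1}b^{-1}ab$ shows, by induction on length, that each length-$k$ iterated Lie bracket $\beta$ in $X,Y$ is congruent modulo $L^{(k+1)}$ to $\log(\gamma_\beta(x_0,y_0))$, where $\gamma_\beta$ is the correspondingly nested group commutator word.

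The core is then a one-degree-at-a-time cancellation. Suppose a word $w$ satisfies $\log(w(x_0,y_0))=m(X+Y)+z$ with $z\in L^{(k)}$, $k\geq2$, and write $z=z_k+z'$ with $z_k\in L_k$ and $z'\in L^{(k+1)}$. Expanding $z_k=\sum_i q_i\beta_i$ in a basis of iterated brackets, pick a common denominator $D$ with all $Dq_i\in\ZZ$, and set $\tilde w := w^{D}\cdot\prod_i\gamma_{\beta_i}(x_0,y_0)^{-Dq_i}$. By the two congruences above, raising to the $D$-th power scales the logarithm and the correction factor subtracts exactly $Dz_k$ in degree $k$ while disturbing nothing in lower degrees, so $\log(\tilde w(x_0,y_0))=Dm(X+Y)+\tilde z$ with $\tilde z\in L^{(k+1)}$. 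Starting from $w=xy$, where $\log(x_0y_0)=X+Y+z$ with $z\in L^{(2)}$, and iterating for $k=2,\dots,c$ eliminates the error since $L^{(c+1)}=0$, producing $w_c$ and $m_c$. For the bracket, the identical iteration starting from $w=[x,y]_{\mathrm{grp}}$, whose logarithm is $[X,Y]$ plus an error in $L^{(3)}$, produces $w'_c$ and $m'_c$ (the base class $c=1$ is trivial, with $w_1=xy$ and $w'_1=e$).

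The main obstacle is the bookkeeping behind the two congruences of the second paragraph: extracting from the BCH formula that right multiplication by an element with logarithm in $L^{(k)}$ acts as addition modulo $L^{(k+1)}$ without affecting lower degrees, and that nested group commutators realise nested Lie brackets modulo higher degree. Once these are established, the inductive cancellation is routine; the only price is that clearing denominators inflates $m_c$ and $m'_c$, which is harmless since we only need \emph{some} positive integer multiple.
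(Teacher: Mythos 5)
Your argument is correct. Note that the paper itself gives no proof of this lemma, stating it only ``as a consequence of the Baker--Campbell--Hausdorff formula''; your write-up supplies the standard justification behind that phrase: reduce to the free nilpotent group of class $c$ on two generators by universality, observe that all the relevant logarithms lie in the rational form of the free nilpotent Lie algebra (since the BCH coefficients are rational), and then kill the error one degree at a time using $D$-th powers to clear denominators and nested commutator words $\gamma_{\beta}$ to realise the basis brackets modulo higher degree. The two BCH facts you isolate (right multiplication by an element with logarithm in $L^{(k)}$ adds that logarithm modulo $L^{(k+1)}$ without disturbing lower degrees, because every nonzero correction term is a bracket of length at least two containing that logarithm; and nested group commutators realise nested Lie brackets modulo higher degree) are exactly the inputs needed, and the induction terminates because $L^{(c+1)}=0$. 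This is a complete proof of the statement as the paper uses it.
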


A direct consequence of this lemma is the following.

\begin{Cor}\label{Corollaire de l'existence d'un mot pour la somme et le crochet}
 Let $X \subset N$ be a symmetric subset of a simply connected nilpotent Lie group of nilpotency class $c$. Then there are integers $n_c,n'_c,m_c,m'_c \in \NN$ depending only on $c$ such that 
 $$ \log(X) + \log(X) \subset \frac{1}{m_c}\log(X^{n_c}) \text{ and } [\log(X),\log(X)] \subset \frac{1}{m'_c}\log(X^{n'_c}). $$
\end{Cor}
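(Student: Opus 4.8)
The plan is to deduce the corollary directly from Lemma~\ref{Lemme technique sur l'existence d'un mot pour somme et crochet} by unwinding what the words $w_c$ and $w'_c$ do on a symmetric subset. First I would fix the nilpotency class $c$ of $N$, so that the lemma supplies words $w_c, w'_c$ in two letters together with the integers $m_c, m'_c$. The key observation is that a word in two letters of some length $\ell$ — say $w_c$ is a product of $\ell$ symbols each of which is one of $s_1, s_1^{-1}, s_2, s_2^{-1}$ — evaluates, for any $x, y \in X$, to a product of $\ell$ elements drawn from $\{x, x^{-1}, y, y^{-1}\}$. Since $X$ is symmetric, each of these four elements lies in $X$, so $w_c(x,y) \in X^{\ell}$. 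Setting $n_c := \ell$ (the word length of $w_c$), we get $w_c(x,y) \in X^{n_c}$ for every pair $x, y \in X$, and similarly $w'_c(x,y) \in X^{n'_c}$ where $n'_c$ is the word length of $w'_c$.

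With this in hand the two inclusions follow immediately. For the first, take any two elements $u, v \in \log(X)$, write $u = \log(x)$ and $v = \log(y)$ with $x, y \in X$; the lemma gives $m_c(u + v) = m_c(\log(x) + \log(y)) = \log(w_c(x,y)) \in \log(X^{n_c})$, hence $u + v \in \tfrac{1}{m_c}\log(X^{n_c})$. As $u, v$ were arbitrary this yields $\log(X) + \log(X) \subset \tfrac{1}{m_c}\log(X^{n_c})$. The bracket inclusion is verbatim the same argument using $w'_c$, $m'_c$, $n'_c$: for $u = \log(x)$ and $v = \log(y)$ one has $m'_c[u,v] = \log(w'_c(x,y)) \in \log(X^{n'_c})$, so $[\log(X),\log(X)] \subset \tfrac{1}{m'_c}\log(X^{n'_c})$.

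There is essentially no hard part here — the corollary is a bookkeeping consequence of the lemma. The only point that requires a moment's care is the passage from ``word in two letters'' to ``product of $n_c$ elements of $X$'': one must use symmetry of $X$ to absorb the inverse letters $x^{-1}, y^{-1}$ back into $X$, and one must define $n_c$ and $n'_c$ as the respective word lengths of $w_c$ and $w'_c$ (which depend only on $c$, since the words do). Everything else is just rewriting the two identities of the lemma and quantifying over all $x, y \in X$. I would therefore present the argument in two short symmetric paragraphs, stating the word-length observation once and applying it to each identity in turn.
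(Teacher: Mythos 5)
Your proof is correct and matches the paper's approach exactly: the paper states this corollary without proof as ``a direct consequence'' of the preceding word lemma, and your two observations --- that symmetry of $X$ lets you absorb the inverse letters of $w_c(x,y)$ and $w'_c(x,y)$ so that these land in $X^{n_c}$ and $X^{n'_c}$ with $n_c, n'_c$ the word lengths (hence depending only on $c$) --- are precisely the bookkeeping the paper leaves implicit.
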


In particular, as $\log$ is a homeomorphism, if $X^n$ is discrete for all $n \in \NN$ then $\log(X^n)$ is discrete and so $\log(X)+ \cdots + \log(X)$ and $\log(X) + [\log(X),\log(X)]$ are discrete, according to Corollary \ref{Corollaire de l'existence d'un mot pour la somme et le crochet}.

Now let us show a result about relatively dense sets in nilpotent groups.

\begin{Lem}\label{Lorgarithme des ensembles relativement denses}
 Let $X$ be a relatively dense symmetric subset of a simply connected nilpotent Lie group $N$ and suppose $e \in X$. Then there is $n_0 \in \NN$ such that for all $n \geq n_0$ the set $\log(X^n)$ is relatively dense in $\mathfrak{g}$. Moreover, $n_0$ depends only on the nilpotency class of $N$. 
\end{Lem}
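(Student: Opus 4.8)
The plan is to induct on the nilpotency class $c$ of $N$, reducing the relative density of $\log(X^n)$ in $\mathfrak{g}$ to a relative density statement for a central quotient together with a separate filling of the last term of the lower central series. Write $\mathfrak{g}=\mathfrak{g}_1\supseteq\mathfrak{g}_2\supseteq\cdots\supseteq\mathfrak{g}_c\supseteq\mathfrak{g}_{c+1}=0$ for the lower central series and set $\mathfrak{z}:=\mathfrak{g}_c$, which is central since $[\mathfrak{g},\mathfrak{g}_c]\subseteq\mathfrak{g}_{c+1}=0$; let $Z=\exp(\mathfrak{z})$ be the corresponding closed central subgroup. When $c=1$ the group $N$ is abelian, $\exp$ is an isomorphism $(\mathfrak{g},+)\to N$, and relative density transfers verbatim, so $\log(X^n)$ is relatively dense for every $n\geq 1$; this starts the induction.

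For the inductive step I would first push everything to $N/Z$, which is simply connected nilpotent of class $c-1$. The image $\pi(X)$ is symmetric, contains the identity, and is relatively dense in $N/Z$; since $\bar\pi\circ\log=\log\circ\pi$, where $\bar\pi:\mathfrak{g}\to\mathfrak{g}/\mathfrak{z}$, the inductive hypothesis provides $n_1$, depending only on $c-1$, such that the projection of $\log(X^n)$ to $\mathfrak{g}/\mathfrak{z}$ is relatively dense for all $n\geq n_1$. It remains to fill the central direction: concretely, I want to produce inside some $\log(X^M)$, with $M$ depending only on $c$, a subset of $\mathfrak{z}$ that is relatively dense in $\mathfrak{z}$.

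This central filling is the crux of the argument. The key observations are that the $c$-fold iterated Lie bracket $\beta(x_1,\ldots,x_c):=[x_1,[x_2,\ldots,[x_{c-1},x_c]]]$ takes values in $\mathfrak{z}=\mathfrak{g}_c$, surjects onto $\mathfrak{z}$, and, because altering any argument by an element of $\mathfrak{g}_2$ produces a bracket landing in $\mathfrak{g}_{c+1}=0$, factors through a surjective multilinear map on $(\mathfrak{g}/\mathfrak{g}_2)^c$. Applying the abelian case to $N/N_2$ shows that the projection $S$ of $\log(X)$ to $\mathfrak{g}/\mathfrak{g}_2$ is relatively dense. Now multilinear images of relatively dense sets need not be relatively dense, and this is the main obstacle; I would get around it by fixing all but one argument of $\beta$ to suitably chosen bounded elements of $S$ and letting the remaining argument run over $S$, summing finitely many such terms so that the resulting \emph{linear} map is surjective onto $\mathfrak{z}$. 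A surjective linear image of a relatively dense set is relatively dense, which yields a relatively dense subset $D\subseteq\mathfrak{z}$ of this form. Finally, each value $\beta(s_1,\ldots,s_c)$ is the logarithm of an honest $c$-fold group commutator of elements of $X$, and, since $\mathfrak{z}$ is central, no higher Baker--Campbell--Hausdorff terms appear; so by Corollary \ref{Corollaire de l'existence d'un mot pour la somme et le crochet} (iterated for the brackets, and once more for the finitely many sums) there are constants $M_0,M$ depending only on $c$ with $D\subseteq\frac{1}{M_0}\log(X^M)$. Rescaling, $M_0 D\subseteq\log(X^M)$ is then a relatively dense subset of $\mathfrak{z}$.

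To conclude I would recombine the two densities. Given a target $w\in\mathfrak{g}$ and $n\geq n_1$, relative density in $\mathfrak{g}/\mathfrak{z}$ gives $u\in\log(X^n)$ with $\bar\pi(u)$ within bounded distance of $\bar\pi(w)$; writing $w=u+z+O(1)$ with $z\in\mathfrak{z}$ and choosing $d\in M_0 D$ within bounded distance of $z$, the centrality of $d$ makes the Baker--Campbell--Hausdorff formula exact, so $\log(\exp(u)\exp(d))=u+d\in\log(X^{n+M})$ lies within bounded distance of $w$. Hence $\log(X^{n+M})$ is relatively dense in $\mathfrak{g}$ for all $n\geq n_1$, and $n_0:=n_1+M$ depends only on the nilpotency class $c$, completing the induction.
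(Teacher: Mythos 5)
Your induction and the final reassembly step are sound, and the existence part of the conclusion would follow from your argument; but the route you take to fill the central direction is genuinely different from the paper's and it breaks the \emph{uniformity} claim. You produce a relatively dense subset of $\mathfrak{z}=\mathfrak{g}_c$ as a sum $T_1(S)+\cdots+T_k(S)$ of $k$ linearisations $T_j=\beta(s_1^{(j)},\ldots,s_{c-1}^{(j)},\cdot)$ of the iterated bracket, chosen so that $\sum_j \im(T_j)=\mathfrak{z}$. Each extra summand costs one more application of Corollary \ref{Corollaire de l'existence d'un mot pour la somme et le crochet}, so your constants $M_0,M$ depend on $k$. But $k$ cannot be bounded in terms of the nilpotency class alone: in the free $2$-step nilpotent Lie algebra on $d$ generators one has $\mathfrak{z}=\Lambda^2\RR^d$ of dimension $\binom{d}{2}$ while each $\im(T_s)=s\wedge\RR^d$ has dimension $d-1$, forcing $k\geq d/2$. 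So your $n_0=n_1+M$ depends on $\dim N$, and the ``Moreover, $n_0$ depends only on the nilpotency class'' clause — which is part of the statement and is echoed in Proposition \ref{Consequence logarithme des ensembles relativement denses} — is not established. (A smaller imprecision: $\beta(s_1,\ldots,s_c)$ is not literally the logarithm of a $c$-fold group commutator, since the intermediate commutators are not central; your fallback to the iterated Corollary \ref{Corollaire de l'existence d'un mot pour la somme et le crochet} does repair this, with constants that are genuinely class-dependent only.)

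The paper avoids brackets entirely in the central step, and this is where the two arguments really diverge. It quotients by the full centre $Z(N)$ and then observes that for any $z\in\mathfrak{z}(\mathfrak{n})$, relative density of $X^n$ gives $\exp(z)=xb$ with $x\in X^n$ and $b$ in a fixed compact set $L$; since $\exp(z)$ is central, $b=x^{-1}\exp(z)$ commutes with $x$, so $z=\log(x)+\log(b)$ and hence $\mathfrak{z}(\mathfrak{n})\subset\log(X^n)+\log(L)$ in one stroke. Combining this with the inductive step costs a single application of Corollary \ref{Corollaire de l'existence d'un mot pour la somme et le crochet}, so the resulting $n_0$ is $n_{c}n_{c-1}\cdots n_1$ and depends only on the class. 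If you replace your bracket construction by this commuting-factorisation trick, your proof closes completely; as written, it proves the existence statement for each fixed $N$ but not the asserted dependence of $n_0$ on the nilpotency class alone.
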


\begin{proof}
 Let us prove Lemma \ref{Lorgarithme des ensembles relativement denses} by induction on the nilpotency class $c$. If $c=1$ $\log$ is also a group homomorphism so we can take $n_0=1$. 
 
 Assume the lemma to be true for a given $c$ and let $N$ be a simply connected nilpotent Lie group with nilpotency class $c+1$. Let $p:N \rightarrow N / Z(N)$ be the natural projection where $Z(N)$ is the centre of $N$ and $\pi:\mathfrak{n} \rightarrow \mathfrak{n}/ \mathfrak{z}(\mathfrak{n})$ be the natural projection where $\mathfrak{z}(\mathfrak{n})$ is the centre of $\mathfrak{n}$ as a Lie algebra. Then $p(X)$ is relatively dense in $N/Z(N)$ and by the induction hypothesis there is $n_0 \in \NN$ such that for all $n \geq n_0$ we have $\log(p(X)^n)=\pi(\log(X^n))$ is relatively dense too. Thus, there is a compact subset $K \subset \mathfrak{n}$ such that $\mathfrak{n} = \log(X^n) + K + \mathfrak{z}(\mathfrak{n})$. 
 
 Let $ L \subset N$ be a compact subset such that $X^n\cdot L = N$. Then for any $z \in \mathfrak{z}(\mathfrak{n})$ there are $x \in X^n$ and $b \in L$ such that $\exp(z)=x b$. Thus, $x$ and $b$ commute so $z = \log(x) + \log(b)$. Hence, $\mathfrak{z}(\mathfrak{n}) \subset \log(X^n) + \log(L)$ so $$\log(X^n) + K + \log(X^n) + \log(L)= \mathfrak{n}.$$ But according to Corollary \ref{Corollaire de l'existence d'un mot pour la somme et le crochet} there are $m_{c+1},n_{c+1} \in \NN$ such that $$\log(X^n)+\log(X^n) \subset \frac{1}{m_{c+1}}\log(X^{n_{c+1}n}),$$
 so 
 $$\log(X^{n_{c+1}n}) + m_{c+1}(K+\log(L)) = \mathfrak{n}.$$
 Therefore, for all $n \geq n_{c+1}n_0$ the set $\log(X^n)$ is relatively dense.
 \end{proof}

 \begin{Rem}
  According to the proof of Lemma \ref{Lorgarithme des ensembles relativement denses}, if $N$ has nilpotency class $c$ then we can take $n_0$ to be $n_1n_2\cdots n_c$ where $n_c$ is the integer given by Corollary \ref{Corollaire de l'existence d'un mot pour la somme et le crochet}.
 \end{Rem}

As a consequence of the lemmas stated above we obtain:

\begin{Prop}\label{Consequence logarithme des ensembles relativement denses}
 Let $\Lambda \subset N$ be an approximate lattice in a simply connected nilpotent Lie group. Then, there is a natural number $n_0$, depending only on the nilpotency class of $N$, such that for all $n \geq n_0$ the sets $\log(\Lambda^{n})$ and $\log(\Lambda^{n})+[\log(\Lambda^{n}),\log(\Lambda^{n})]$ are approximate lattices in the Lie algebra $\mathfrak{n}$ with its additive group structure. 
\end{Prop}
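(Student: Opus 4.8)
The plan is to verify, for every $n$ larger than the integer $n_0$ supplied by Lemma \ref{Lorgarithme des ensembles relativement denses}, that the two sets
$$A := \log(\Lambda^n), \qquad B := \log(\Lambda^n) + [\log(\Lambda^n),\log(\Lambda^n)]$$
satisfy the hypotheses of Proposition \ref{Caractérisation des réseaux approximatifs uniformes} in the locally compact abelian group $(\mathfrak{n},+)$, and then to invoke that proposition. First I would record the trivial reductions. Since $\Lambda$ is symmetric and contains $e$, so does every power $\Lambda^n$; as $\log(g^{-1}) = -\log(g)$ this gives $A = -A$ and $0 \in A$, and bilinearity of the bracket together with $A = -A$ gives $B = -B$ and $0 \in B$. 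For relative density, Lemma \ref{Lorgarithme des ensembles relativement denses} applied to $X = \Lambda$ shows that $A = \log(\Lambda^n)$ is relatively dense once $n \geq n_0$; since $0 = [0,0] \in [A,A]$ we have $B \supseteq A$, so $B$ is relatively dense as well. Note that only relative density requires $n \geq n_0$, which is why the threshold depends on the nilpotency class alone.

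The substance of the argument is discreteness: by Proposition \ref{Caractérisation des réseaux approximatifs uniformes} it is enough to show that the finite sums $A + \cdots + A$ and $B + \cdots + B$ are discrete (equivalently, locally finite). Call a subset $S \subseteq \mathfrak{n}$ \emph{log-bounded} if $S \subseteq \tfrac1M \log(\Lambda^q)$ for some $M,q \in \NN$. Since $\Lambda$ is an approximate lattice, $\Lambda^q$ is discrete for every $q$ by Proposition \ref{Caractérisation des réseaux approximatifs uniformes}, so $\log(\Lambda^q)$ and each rescaling $\tfrac1M\log(\Lambda^q)$ are locally finite; as local finiteness passes to subsets, \emph{every log-bounded set is locally finite}. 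Since $A$ is visibly log-bounded, it then suffices to prove that the class of log-bounded sets is closed under addition and under taking brackets: for then $[A,A]$, hence $B$, and all their finite sums are log-bounded, being built from $A$ by finitely many such operations.

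The main obstacle — and the one genuinely delicate point — is closure under addition: a sum of two locally finite subsets of $\mathfrak{n}$ need not be locally finite (e.g. $\ZZ + \sqrt2\,\ZZ$), so one cannot add $\tfrac1{M_1}\log(\Lambda^{q_1})$ and $\tfrac1{M_2}\log(\Lambda^{q_2})$ termwise and hope to stay discrete; the differing denominators must first be reconciled inside a single set $\log(\Lambda^{q'})$. The key is the identity $\log(a^k) = k\log(a)$, valid for every $a \in N$ and $k \in \NN$ because the powers of $a$ lie in a one-parameter subgroup. Given an element $\tfrac1{M_1}\log(a) + \tfrac1{M_2}\log(b)$ with $a \in \Lambda^{q_1}$ and $b \in \Lambda^{q_2}$, I would rewrite it as $\tfrac1{M_1 M_2}\bigl(\log(a^{M_2}) + \log(b^{M_1})\bigr)$, observe that $a^{M_2} \in \Lambda^{q_1 M_2}$ and $b^{M_1} \in \Lambda^{q_2 M_1}$ both lie in $\Lambda^{q'}$ for $q' = \max(q_1 M_2, q_2 M_1)$, and then apply the sum part of Corollary \ref{Corollaire de l'existence d'un mot pour la somme et le crochet} to $X = \Lambda^{q'}$ to land in $\tfrac1{M_1 M_2 m_c}\log(\Lambda^{n_c q'})$, a log-bounded set. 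The bracket case is identical, using $[\tfrac1{M_1}\log a, \tfrac1{M_2}\log b] = \tfrac1{M_1 M_2}[\log a, \log b]$ together with the bracket part of Corollary \ref{Corollaire de l'existence d'un mot pour la somme et le crochet}. With closure established, the finite sums $A + \cdots + A$ and $B + \cdots + B$ are log-bounded and therefore locally finite, so Proposition \ref{Caractérisation des réseaux approximatifs uniformes} upgrades $A$ and $B$ to approximate lattices in $(\mathfrak{n},+)$ for every $n \geq n_0$.
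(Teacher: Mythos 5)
Your proof is correct and follows essentially the same route as the paper: symmetry and $0\in$ are immediate, relative density comes from Lemma \ref{Lorgarithme des ensembles relativement denses}, discreteness of all finite sums comes from Corollary \ref{Corollaire de l'existence d'un mot pour la somme et le crochet}, and Proposition \ref{Caractérisation des réseaux approximatifs uniformes} concludes. Your ``log-bounded'' bookkeeping, reconciling denominators via $\log(a^k)=k\log(a)$ before reapplying the corollary, is just a more explicit version of an iteration the paper leaves implicit, and it is a welcome clarification rather than a different argument.
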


\begin{proof}
 First of all, for all $n \in \NN$ both $\log(\Lambda^{n})$ and $\log(\Lambda^{n})+[\log(\Lambda^n),\log(\Lambda^n)]$ are symmetric subsets containing $0$. Now, according to Lemma \ref{Lorgarithme des ensembles relativement denses} there is $n_0 \in \NN$ such that for any integer $n\geq n_0$ the set $\log(\Lambda^n)$ is relatively dense and so is $\log(\Lambda^{n})+[\log(\Lambda^n),\log(\Lambda^n)]$. As a consequence of Corollary \ref{Corollaire de l'existence d'un mot pour la somme et le crochet} there is $m \in \NN$ such that 
 $$ \left(\log(\Lambda^{n})+[\log(\Lambda^{n}),\log(\Lambda^{n})]\right) +\cdots+ \left( \log(\Lambda^{n})+[\log(\Lambda^{n}),\log(\Lambda^{n})]\right) \subset \log(\Lambda^m),$$
 hence is discrete. Similarly, 
 $$ \log(\Lambda^{n})+\cdots+ \log(\Lambda^{n}) \subset \log(\Lambda^m),$$
 so is a discrete set. 
 
 So for all $n \geq n_0$ the sets $\log(\Lambda^{n})$ and $\log(\Lambda^{n})+[\log(\Lambda^{n}),\log(\Lambda^{n})]$ are approximate lattices according to Proposition \ref{Caractérisation des réseaux approximatifs uniformes}.
\end{proof}

\subsection{Rigidity}

Now, let us turn to the proof of Theorem \ref{Lemme de rigidité}. Recall its statement.

\begin{Thm*}
 Let $\Lambda \subset G$ be an approximate lattice in a simply connected nilpotent Lie group and $\Gamma: = \langle \Lambda \rangle$ be the subgroup generated by $\Lambda$. Let $f: \Gamma \rightarrow N$ be an abstract group homomorphism from $\Gamma$ to a simply connected nilpotent Lie group. Then there are unique group homomorphisms $\tilde{f}: G \rightarrow N$ and $\rho: \Gamma \rightarrow N$ such that :
\begin{enumerate}[label=(\roman*)]
 \item $\tilde{f}$ is a Lie group homomorphism;
 \item $f=\rest{\tilde{f}}{\Gamma} \cdot \rho$;
 \item The image of $\rho$ lies in the centraliser $C(\im(\tilde{f}))$ of $\im(\tilde{f})$;
 \item $\rest{\rho}{\Lambda}$ is bounded.
\end{enumerate}
\end{Thm*}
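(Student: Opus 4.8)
The plan is to build the Lie group homomorphism $\tilde f$ through its differential and to read off the defect $\rho$ on the Lie algebra side, where the Baker--Campbell--Hausdorff formula converts the multiplicative problem into an additive one to which the abelian rigidity lemma (Lemma \ref{Lemme de rigidité abélien}) applies. Writing $\mathfrak g$ and $\mathfrak n$ for the Lie algebras of $G$ and $N$, a Lie group homomorphism $\tilde f:G\to N$ is the same datum as a Lie algebra homomorphism $\phi=d\tilde f:\mathfrak g\to\mathfrak n$, with $\tilde f=\exp_N\circ\,\phi\,\circ\log_G$; and once $\tilde f$ is fixed the candidate defect is forced to be $\rho(\gamma)=\tilde f(\gamma)^{-1}f(\gamma)$. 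With this choice condition (ii) holds tautologically, while condition (iii) is exactly equivalent to $\rho$ being a homomorphism (one checks $\rho(\gamma\gamma')=\tilde f(\gamma')^{-1}\rho(\gamma)\tilde f(\gamma')\,\rho(\gamma')$, which reduces to $\rho(\gamma)\rho(\gamma')$ precisely when $\im\rho\subset C(\im\tilde f)$). The whole content is therefore to choose $\phi$ so that (iii) and the boundedness (iv) are satisfied.

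I would dispatch uniqueness first, since it is soft. If $\tilde f_1,\tilde f_2$ both satisfy (i)--(iv), then on $\Gamma$ the element $h:=\tilde f_1^{-1}\tilde f_2$ is a product of the two bounded maps $\rho_1,\rho_2$, hence bounded on $\Lambda$; using (iii) to see that $\im\tilde f_1$ and $\im\tilde f_2$ commute, $h$ extends to a Lie homomorphism $G\to N$. A Lie homomorphism bounded on the relatively dense set $\Lambda$ is bounded on $\Lambda\cdot K=G$, so its image is a bounded, hence trivial, connected subgroup of the simply connected nilpotent group $N$. Thus $h\equiv e$, which gives uniqueness of $\tilde f$ and then of $\rho$.

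For existence I would induct on the nilpotency class of $N$. The base case, $N$ abelian, is essentially the abelian rigidity lemma: identifying $N$ with $\mathfrak n$, the map $f$ is additive, and feeding Lemma \ref{Lemme de rigidité abélien} the approximate lattice $\log(\Lambda^{n})\subset\mathfrak g$ (an approximate lattice for $n\ge n_0$ by Proposition \ref{Consequence logarithme des ensembles relativement denses}) produces a unique linear $\phi:\mathfrak g\to\mathfrak n$ with $f-\phi\circ\log_G$ bounded on $\Lambda$; since $f$ kills commutators and, by Proposition \ref{Consequence logarithme des ensembles relativement denses}, the bracket set $[\log\Lambda^n,\log\Lambda^n]$ is part of an approximate lattice, $\phi$ is bounded on a relatively dense subset of $[\mathfrak g,\mathfrak g]$ and hence vanishes there, so $\phi$ is genuinely a Lie algebra homomorphism. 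For the inductive step I would pass to $N':=N/Z(N)$, of smaller class, apply the inductive hypothesis to $p\circ f$ with $p:N\to N'$ to obtain $\tilde f'$ and $\rho'$, lift the Lie algebra homomorphism $d\tilde f'$ across the central extension $\mathfrak z(\mathfrak n)\to\mathfrak n\to\mathfrak n'$, and then fix the remaining central freedom by a second application of Lemma \ref{Lemme de rigidité abélien}, this time to the approximate lattice $\log(\Lambda^n)+[\log(\Lambda^n),\log(\Lambda^n)]$, so that the bracket terms produced by Baker--Campbell--Hausdorff are themselves controlled via Corollary \ref{Corollaire de l'existence d'un mot pour la somme et le crochet}.

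The hard part will be the inductive step, and specifically the interplay of two points. First, a linear lift of $d\tilde f'$ need not respect brackets, so one must correct it within the central fibre to make $\phi$ a Lie algebra homomorphism; this is exactly where one exploits that $[\log\Lambda^n,\log\Lambda^n]$ is part of an approximate lattice, so that abelian rigidity pins the quadratic (bracket) correction as well. Second, and most delicate, one must arrange \emph{simultaneously} that the residual $\rho$ is bounded on $\Lambda$ and that its image is central in $\im\tilde f$. The genuine obstruction to controlling $f$ along commutator directions is the gap between $[G,G]\cap\Gamma$ and the commutator subgroup $[\Gamma,\Gamma]$ — an element of $\Lambda^k$ whose logarithm lies in $[\mathfrak g,\mathfrak g]$ need not be a product of commutators \emph{of elements of} $\Gamma$, so $f$ on it is not tautologically trivial. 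The crux is to show, using that the bracket set is relatively dense in $[\mathfrak g,\mathfrak g]$ and sits inside an approximate lattice, that $f$ on these directions differs from $\tilde f$ only by a bounded amount, and that this bounded discrepancy is absorbed into the central correction $\rho$ rather than into the homomorphic part $\tilde f$. Verifying (iii) and (iv) together at this step is where the real work lies; the reductions above are comparatively formal.
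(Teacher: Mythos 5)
Your formal reductions are sound (recovering $\rho$ as $\tilde f^{-1}f$, the equivalence of (iii) with $\rho$ being a homomorphism, the passage to Lie algebras via Baker--Campbell--Hausdorff), and your abelian base case is essentially right. But the existence argument has a genuine gap at exactly the point you flag yourself: the inductive step on the nilpotency class of $N$ is a plan, not a proof. Two things are left unresolved. First, ``lift $d\tilde f'$ across the central extension $\mathfrak z(\mathfrak n)\to\mathfrak n\to\mathfrak n'$'' is not free: a linear lift exists, but its bracket defect is a $\mathfrak z(\mathfrak n)$-valued bilinear map (a $2$-cocycle), and Lemma \ref{Lemme de rigidité abélien} rigidifies \emph{homomorphisms} from the group generated by an approximate lattice, not cocycles; you do not say how the ``quadratic correction'' becomes an object that lemma can see. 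Second, the simultaneous verification of (iii) and (iv) --- that the residual discrepancy along commutator directions is bounded \emph{and} centralises $\im\tilde f$ --- is the actual content of the theorem, and you explicitly defer it (``where the real work lies''). The paper closes this gap with no induction on the class of $N$, by a device absent from your proposal: Lemma \ref{Extension à l'anneau de Lie engendré} shows that $\log\circ f\circ\exp$ extends to a genuine \emph{Lie ring homomorphism} $\phi$ on the Lie ring generated by $\log(\Lambda)$. The words of Corollary \ref{Corollaire de l'existence d'un mot pour la somme et le crochet} express multiples of sums and brackets of logarithms as logarithms of group words, so the homomorphism property of $f$ forces additivity and bracket-compatibility of $\phi$; this is precisely what controls $f$ on elements whose logarithms lie in $[\mathfrak g,\mathfrak g]$ even though they are not products of commutators of $\Gamma$. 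One application of Lemma \ref{Lemme de rigidité abélien} to $\phi$ on the approximate lattice $\log(\Lambda)+[\log(\Lambda),\log(\Lambda)]$ then gives $\tilde\phi$, and both the fact that $\tilde\phi$ respects brackets and the fact that $\im(\phi-\tilde\phi)$ commutes with $\im\tilde\phi$ fall out of a single boundedness principle: a bilinear (resp.\ linear) map that is $\mathcal{O}(N(x_1)+N(x_2))$ (resp.\ bounded) on a relatively dense set vanishes.

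There is also a soft gap in your uniqueness argument: $h=\tilde f_1^{-1}\tilde f_2$ is a pointwise quotient of homomorphisms and is not itself a homomorphism unless $\im\tilde f_1$ and $\im\tilde f_2$ commute, which condition (iii) does not give you (it only says $\im\rho_i$ centralises $\im\tilde f_i$). The paper's Lemma \ref{Egalité de deux morphismes à distance bornée sur un réseau approximatif} repairs this by induction on $\dim N$: modulo $Z(N)$ the two maps agree by the induction hypothesis, so $g(\cdot)^{-1}f(\cdot)$ takes values in the centre, is therefore a homomorphism, and a bounded homomorphism into $Z(N)\simeq\RR^m$ is trivial. Your observation that boundedness on $\Lambda$ propagates to all of $G$ by relative density is correct and supplies the other half of that lemma.
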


We will need an abelian version of this result.

\begin{Lem}\label{Lemme de rigidité abélien}
 Let $\Lambda \subset \RR^n$ be an approximate lattice and write $\Gamma := \langle \Lambda \rangle $. Let $\phi: \Gamma \rightarrow \RR^m$ be a group homomorphism. Then there is a continuous homomorphism $\tilde{\phi}: \RR^n \rightarrow \RR^m$ such that $\rest{(\tilde{\phi}-\phi)}{\Lambda}$ is bounded. 
\end{Lem}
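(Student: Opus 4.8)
The plan is to use Meyer's theorem (Theorem~\ref{Meyer's theorem}) to place $\Lambda$ inside a cut-and-project scheme, thereby reducing the statement to the rigidity of homomorphisms on honest lattices in locally compact abelian groups. By Theorem~\ref{Meyer's theorem}, $\Lambda$ is a Meyer set, so there is a cut-and-project scheme $(\RR^n,H,\Gamma_0)$ and a compact neighbourhood $W_0 \subset H$ of the identity with $\Lambda \subseteq P_0 := \tau^{-1}(W_0) = \pi_{\RR^n}\!\left((\RR^n \times W_0)\cap \Gamma_0\right)$, where $\tau$ is the star map. Since $\Lambda$ is relatively dense, so is $P_0$, and the last assertion of Proposition~\ref{Les ensembles modèles sont des réseaux approximatifs uniformes} guarantees that $\Gamma_0$ is a \emph{uniform} lattice. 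Writing $\Gamma_1 := \pi_{\RR^n}(\Gamma_0)$, the projection $\pi_{\RR^n}$ identifies $\Gamma_1$ with the uniform lattice $\Gamma_0$, and $\Gamma = \langle \Lambda\rangle \subseteq \Gamma_1$. Because $\RR^m$ is a divisible abelian group, it is an injective $\ZZ$-module, so the homomorphism $\phi$ extends (non-canonically) to a homomorphism $\phi_1 : \Gamma_1 \to \RR^m$ along the inclusion $\Gamma \subseteq \Gamma_1$; it then suffices to approximate $\phi_1$ on the full lattice $\Gamma_1 \cong \Gamma_0$, and to transfer the bound back to $\Lambda$.

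The heart of the argument is the following classical fact, which I would isolate as an auxiliary claim: if $\Gamma_0$ is a uniform lattice in a compactly generated locally compact abelian group $A$, then every homomorphism $\psi : \Gamma_0 \to \RR^m$ is the restriction of a continuous homomorphism $\Phi : A \to \RR^m$. Here $A = \RR^n \times H$ is compactly generated because it contains a uniform lattice (a standard Milnor--\v Svarc type argument). To prove the claim I would invoke the structure theorem $A \cong \RR^p \times \ZZ^q \times K$ with $K$ compact. Since $\RR^m$ is torsion free and $\Gamma_0 \cap K$ is finite, $\psi$ kills $\Gamma_0 \cap K$ and hence factors through the uniform lattice $\bar\Gamma := \Gamma_0/(\Gamma_0 \cap K)$ of $\RR^p \times \ZZ^q$. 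The subgroup $\bar\Gamma \cap \RR^p$ is a full-rank lattice of $\RR^p$, on which $\psi$ extends uniquely to an $\RR$-linear map $L : \RR^p \to \RR^m$; since $\bar\Gamma/(\bar\Gamma\cap\RR^p)$ is free abelian (it is a finite-index subgroup of $\ZZ^q$) and $\RR^m$ is divisible, a splitting of this quotient together with a suitable choice of values on generators extends $L$ to a continuous homomorphism $\RR^p \times \ZZ^q \to \RR^m$ agreeing with $\psi$ on $\bar\Gamma$; composing with the quotient $A \to \RR^p \times \ZZ^q$ yields the desired $\Phi$. This is where essentially all the work lies, and it is exactly the ``well-known result from the abelian theory'' the statement alludes to.

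Applying this claim to $\psi = \phi_1$ on $A = \RR^n \times H$, I obtain a continuous homomorphism $\Phi : \RR^n \times H \to \RR^m$ restricting to $\phi_1$ on $\Gamma_1$. Decomposing $\Phi = \Phi_1 \circ \pi_{\RR^n} + \Phi_2 \circ \pi_H$, where $\Phi_1 : \RR^n \to \RR^m$ is $\RR$-linear and $\Phi_2 : H \to \RR^m$ is a continuous homomorphism, I set $\tilde\phi := \Phi_1$. For $\lambda \in \Lambda$ one has $\phi(\lambda) = \phi_1(\lambda) = \Phi(\lambda, \tau(\lambda)) = \tilde\phi(\lambda) + \Phi_2(\tau(\lambda))$, so $(\tilde\phi - \phi)(\lambda) = -\Phi_2(\tau(\lambda))$. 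Finally $\tau(\lambda) \in W_0$ for every $\lambda \in \Lambda$ and $W_0$ is compact, whence $\Phi_2(\tau(\Lambda)) \subseteq \Phi_2(W_0)$ is bounded by continuity of $\Phi_2$. Therefore $\rest{(\tilde{\phi}-\phi)}{\Lambda}$ is bounded, as required.

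The only genuine obstacle is the abelian lattice-rigidity claim of the second paragraph (and, within it, the handling of the $\ZZ^q$ and compact factors once the vector part has been taken care of by linearity). Given that claim, the passage through Meyer's theorem, the extension of $\phi$ to the full lattice via injectivity of $\RR^m$, and the boundedness at the end using compactness of the window $W_0$ are all routine.
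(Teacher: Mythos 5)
Your argument has a genuine logical problem: it is circular. You deduce Lemma \ref{Lemme de rigidité abélien} from Meyer's theorem (Theorem \ref{Meyer's theorem}), but this lemma is precisely the non-trivial ingredient in the known proofs of Meyer's theorem. The paper says so explicitly when it outlines Schreiber's four-step proof of Theorem \ref{Meyer's theorem}: in the abelian case steps $(i)$, $(ii)$ and $(iv)$ are elementary, ``while step $(iii)$ is a consequence of Lemma \ref{Lemme de rigidité abélien}''. The source cited for the lemma (Moody, Proposition $8.6$) proves it directly from the Delone and approximate-subgroup properties of $\Lambda$, with no appeal to any cut-and-project structure, exactly so that it can then be fed into the construction of the internal space $H$. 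Unless you can point to a proof of Theorem \ref{Meyer's theorem} that does not pass through this rigidity statement (I am not aware of one), your reduction establishes nothing: you are deriving the input from its output.

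There is also a concrete false step inside your auxiliary claim. You assert that $A=\RR^n\times H$ is compactly generated ``because it contains a uniform lattice (a standard Milnor--\v{S}varc type argument)''. That implication is false: any discrete abelian group that is not finitely generated, e.g.\ $\bigoplus_{i\in\NN}\ZZ$, is a cocompact lattice in itself and is not compactly generated. Milnor--\v{S}varc goes the other way: \emph{if} the ambient group is compactly generated, \emph{then} a cocompact lattice in it is finitely generated. To invoke the structure theorem $A\cong\RR^p\times\ZZ^q\times K$ you would first need to know that $\Gamma_0$ is finitely generated, or that $H$ may be chosen compactly generated; neither is part of the statement of Theorem \ref{Meyer's theorem} as given, and in the standard treatments both facts are obtained alongside (not prior to) the rigidity lemma. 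The remaining mechanics --- extending $\phi$ to $\pi_{\RR^n}(\Gamma_0)$ by divisibility of $\RR^m$, splitting $\Phi$ along the two factors, and bounding $(\tilde{\phi}-\phi)$ on $\Lambda$ by compactness of the window $W_0$ --- are correct, but they rest on the two problems above. A repair would have to prove the lemma directly, for instance by choosing a basis of $\RR^n$ inside $\Lambda$ and showing that $\phi$ differs from a linear map by a bounded error using only that $\Lambda$ is relatively dense and that $\Lambda^k$ is uniformly discrete for all $k$.
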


A proof of this lemma can be found in \cite[Proposition $8.6$]{moody1997meyer}

\begin{proof}[Proof of existence]
 Let us start with a lemma.
 
\begin{Lem}\label{Extension à l'anneau de Lie engendré}
Denote by $A$ the Lie ring generated by $\log(\Lambda)$. Then $\log \circ f \circ \exp$ extends to a Lie ring homomorphism $\phi: A \rightarrow \mathfrak{n}$.
\end{Lem}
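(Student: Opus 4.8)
The plan is to build $\phi$ by first extending $\psi:=\log\circ f\circ\exp$ from its natural domain $\log(\Gamma)$ to the larger set $\widehat{L}:=\bigcup_{m\in\NN}\tfrac1m\log(\Gamma)$, which I will show is a Lie subring of $\mathfrak{g}$ containing $A$, and then restricting. Throughout I fix an integer $c$ bounding the nilpotency classes of both $G$ and $N$, so that the words $w_c,w'_c$ and the integers $m_c,m'_c$ of Lemma \ref{Lemme technique sur l'existence d'un mot pour somme  et crochet} act \emph{simultaneously} in $G$ and in $N$.

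First I would record the homogeneity of $\psi$: for $\gamma\in\Gamma$ and $k\in\ZZ$ one has $k\log(\gamma)=\log(\gamma^k)\in\log(\Gamma)$, and since $f$ is a homomorphism and $\log(g^k)=k\log(g)$, it follows that $\psi(k\log\gamma)=k\,\psi(\log\gamma)$. This lets me define $\phi:\widehat{L}\to\mathfrak{n}$ by $\phi(\tfrac1m\log\gamma):=\tfrac1m\log f(\gamma)$. Well-definedness is the first thing to check: if $\tfrac1m\log\gamma=\tfrac1{m'}\log\gamma'$ then $\log(\gamma^{m'})=\log(\gamma'^{m})$, so $\gamma^{m'}=\gamma'^{m}$ by injectivity of $\log$, whence $m'\log f(\gamma)=m\log f(\gamma')$ and the two candidate values agree. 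The same computation shows $\phi$ is $\QQ$-homogeneous on $\widehat{L}$, and by construction $\phi|_{\log(\Gamma)}=\psi$.

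Next I would verify that $\widehat{L}$ is a Lie subring and that $\phi$ respects its operations. Applying Lemma \ref{Lemme technique sur l'existence d'un mot pour somme  et crochet} with $x=\gamma^{m'}$ and $y=\gamma'^{m}\in\Gamma$ gives $\log\gamma^{m'}+\log\gamma'^{m}=\tfrac1{m_c}\log w_c(\gamma^{m'},\gamma'^{m})$, and similarly for the bracket, so $\widehat{L}$ is closed under $+$, $-$ and $[\cdot,\cdot]$; since it contains $\log(\Lambda)$ it contains the generated Lie ring $A$. For the homomorphism property the crucial point is that $f$ commutes with the words: for $a=\log x$, $b=\log y$ with $x,y\in\Gamma$,
\[
\phi(a+b)=\tfrac1{m_c}\log f\bigl(w_c(x,y)\bigr)=\tfrac1{m_c}\log w_c\bigl(f(x),f(y)\bigr)=\log f(x)+\log f(y)=\phi(a)+\phi(b),
\]
where the middle equality uses that $f$ is a homomorphism and the last uses Lemma \ref{Lemme technique sur l'existence d'un mot pour somme  et crochet} \emph{in $N$}. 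The identical computation with $w'_c,m'_c$ yields $\phi([a,b])=[\phi(a),\phi(b)]$. These identities, a priori only on $\log(\Gamma)$, upgrade to all of $\widehat{L}$ by clearing denominators and invoking $\QQ$-homogeneity, so $\phi$ is a Lie ring homomorphism on $\widehat{L}$; its restriction to $A\subseteq\widehat{L}$ is the desired $\phi$, and it extends $\log\circ f\circ\exp$ since the two agree on $A\cap\log(\Gamma)\supseteq\log(\Lambda)$.

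The main obstacle, and the only genuine subtlety, is the interplay between the denominators $\tfrac1{m_c},\tfrac1{m'_c}$ forced by the word identities and the requirement that $\phi$ be \emph{simultaneously} additive, bracket-preserving and homogeneous: this is precisely why I enlarge the domain from $\log(\Gamma)$ to the $\QQ$-saturation $\widehat{L}$ and establish well-definedness there, rather than attempting to define $\phi$ directly on $A$ from a presentation by generators. Once the two word identities of Lemma \ref{Lemme technique sur l'existence d'un mot pour somme  et crochet} are transported through $f$, the remainder is bookkeeping, and I expect no essential difficulty beyond choosing the class bound $c$ uniformly for $G$ and $N$ so that the same words govern both sides.
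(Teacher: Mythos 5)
Your proof is correct. It runs on the same engine as the paper's --- the word identities of Lemma \ref{Lemme technique sur l'existence d'un mot pour somme  et crochet} transported through the group homomorphism $f$, with a single nilpotency bound $c$ chosen to govern both $G$ and $N$ --- but it organises the well-definedness step differently. The paper extends $\log\circ f\circ\exp$ additively to finite sums $\log\gamma_1+\cdots+\log\gamma_n$, and to see that two equal sums have equal images it constructs, by induction, iterated words $w_{c,n}$ witnessing an $n$-fold sum of logarithms by a single element of $\Gamma$; rational multiples and the bracket are handled afterwards by clearing denominators. You instead take as domain the saturation $\widehat{L}=\bigcup_{m}\tfrac{1}{m}\log(\Gamma)$, where every element is represented by a single pair $(m,\gamma)$, so well-definedness collapses to the two-element check $\gamma^{m'}=\gamma'^{m}\Rightarrow f(\gamma)^{m'}=f(\gamma')^{m}$, and additivity and bracket-preservation then follow from the binary words $w_c,w'_c$ alone, upgraded to all of $\widehat{L}$ by $\QQ$-homogeneity. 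What this buys is the elimination of the inductive family $w_{c,n}$; the price is the (easy) verification that $\widehat{L}$ is a Lie subring containing $A$, which is essentially the content of Corollary \ref{Corollaire de l'existence d'un mot pour la somme et le crochet} that the paper also invokes. Both arguments are complete; yours is arguably the tidier packaging of the same idea.
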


\begin{proof}
 According to Corollary \ref{Corollaire de l'existence d'un mot pour la somme et le crochet} we have $ A \subset \QQ(\log(\Gamma))$.

 Assume $G$ and $N$ are of nilpotency class less than or equal to $c$ and $w_c, m_c$ are as in Lemma \ref{Lemme technique sur l'existence d'un mot pour somme  et crochet}. Define by induction the word $w_{c,n}$ for $n \geq 2$:
 \begin{align*}
  & w_{c,2}=w_c ;\\
  & \forall n \geq 2, w_{c,n+1}(x_1,\ldots,x_{n+1}) = w_c(w_{c,n}(x_1,\ldots,x_n), x_{n+1}^{m_c^{n-1}}).
 \end{align*}
 Then, for $g_1,\ldots,g_n \in G$, we have $$\log(w_{c,n}(g_1,\ldots,g_n)) = m_c^{n-1}(\log(g_1) +\cdots+\log(g_n)),$$
 and the same is true for elements of $N$. Therefore, for $\gamma_1,\ldots,\gamma_n, \gamma'_1,\ldots,\gamma'_n \in \Gamma$ such that 
 $$\log(\gamma_1)+\cdots + \log(\gamma_n) = \log(\gamma'_1)+\cdots + \log(\gamma'_n)$$
 we have 
 $$w_{c,n}(\gamma_1,\ldots,\gamma_n)=w_{c,n}(\gamma'_1,\ldots,\gamma'_n).$$
 Then 
 $$w_{c,n}(f(\gamma_1),\ldots,f(\gamma_n))=w_{c,n}(f(\gamma'_1),\ldots,f(\gamma'_n))$$
 because $f$ is a group homomorphism. Hence, 
 $$\phi(\log(\gamma_1))+\cdots + \phi(\log(\gamma_n)) = \phi(\log(\gamma'_1))+\cdots + \phi(\log(\gamma'_n)),$$
 where $\phi$ denotes $\log \circ f \circ \exp$. So we can extend $\log\circ f\circ \exp$ to a group homomorphism $A \rightarrow \mathfrak{n}$, again denoted by $\phi$.
 
 It remains to prove that $\phi$ is a Lie ring homomorphism. For any $x_1,x_2 \in A$ there is a natural number $m$ such that $mx_1,mx_2 \in \log(\Gamma)$. Let $\gamma_1,\gamma_2 \in \Gamma$ be such that $\log(\gamma_1)=mx_1$ and $\log(\gamma_2)=mx_2$, and let $w'_c$ and $m'_c$ be as in Lemma \ref{Lemme technique sur l'existence d'un mot pour somme  et crochet}. Then 
 \begin{align*}
 \phi([x_1,x_2])= & \frac{1}{m'_cm^2}\phi(m'_c[\log(\gamma_1),\log(\gamma_2)])\\
		= &\frac{1}{m'_cm^2}\phi(\log(w'_c(\gamma_1,\gamma_2))) \\
		= &\frac{1}{m'_cm^2}\log(w'_c(f(\gamma_1),f(\gamma_2))) \\
		= &\frac{1}{m^2}[\log(f(\gamma_1)),\log(f(\gamma_2))]= [\phi(x_1),\phi(x_2)].
 \end{align*}
\end{proof}

According to Proposition \ref{Consequence logarithme des ensembles relativement denses}, up to considering $\Lambda^k$ for $k$ large enough, we can assume that $\log(\Lambda)$ and $\log(\Lambda) + [\log(\Lambda),\log(\Lambda)]$ are approximate lattices. Denote by $\phi$ the Lie ring homomorphism given by Lemma \ref{Extension à l'anneau de Lie engendré}, and $\tilde{\phi}$ the one given by Lemma \ref{Lemme de rigidité abélien} applied to $\phi$ and $\log(\Lambda) + [\log(\Lambda),\log(\Lambda)]$.

Now, let us show that $\tilde{\phi}$ is a Lie algebra homomorphism. Set

\begin{equation*}
 B:=\overline{(\phi-\tilde{\phi})(\log(\Lambda) + [\log(\Lambda),\log(\Lambda)])}.
\end{equation*}

Let $x_1,x_2 \in \log(\Lambda)$ and $N$ be a norm on $\mathfrak{n}$. There are $b \geq 0$ and $C \geq 0$ such that $B \subset B_{N}(0,b)$ and for all $y_2 \in \mathfrak{n},y_1 \in B$ :
$$ N([y_1,y_2]) \leq C N(y_2) $$
Therefore, 
\begin{align*}
N(\tilde{\phi}([x_1,x_2])-[\tilde{\phi}(x_1),\tilde{\phi}(x_2)]) & \leq b + N(\phi([x_1,x_2])-[\phi(x_1),\phi(x_2)] - [\tilde{\phi}(x_1)-\phi(x_1),\phi(x_2)] - \\
	& [\phi(x_1),\tilde{\phi}(x_2)-\phi(x_2)] - [\tilde{\phi}(x_1)-\phi(x_1),\tilde{\phi}(x_2)-\phi(x_2)])\\
	& \leq b + C(N(\phi(x_1))+N(\phi(x_2)))+ Cb \\
	& \leq b + C(N(\tilde{\phi}(x_1))+N(\tilde{\phi}(x_2)))+ 3Cb.
\end{align*}
Since $\log(\Lambda)$ is an approximate lattice in $\mathfrak{g}$ and $\tilde{\phi}$ is a linear map, we have
$$\tilde{\phi}([x_1,x_2])-[\tilde{\phi}(x_1),\tilde{\phi}(x_2)]= \mathcal{O}(N(x_1)+ N(x_2)).$$
So the bilinear form 
\begin{align*}
 \mathfrak{g}\times \mathfrak{g} \ \ & \longrightarrow \quad \mathfrak{n} \\
 (x_1,x_2)& \mapsto \tilde{\phi}([x_1,x_2])-[\tilde{\phi}(x_1),\tilde{\phi}(x_2)]
\end{align*}
is null, and hence $\tilde{\phi}$ is a Lie algebra homomorphism.

Finally, $\im(\tilde{\phi})$ and $\im(\phi-\tilde{\phi})$ commute in $\mathfrak{n}$. Indeed, choose $x_1,x_2 \in \log(\Lambda)$ and let $r$ denote $\phi - \tilde{\phi}$ then:
\begin{align*}
r([x_1,x_2]) & = \phi([x_1,x_2])-\tilde{\phi}([x_1,x_2]) \\
		&= [\phi(x_1),\phi(x_2)]- [\tilde{\phi}(x_1),\phi(x_2)] + [\tilde{\phi}(x_1),\phi(x_2)] - [\tilde{\phi}(x_1),\tilde{\phi}(x_2)]\\ 
		&=[r(x_1),\phi(x_2)]+[\tilde{\phi}(x_1),r(x_2)]
\end{align*}

So $[\tilde{\phi}(x_1),r(x_2)]= r([x_1,x_2])- [r(x_1),\phi(x_2)]$. As a consequence, for any given $x_2 \in \log(\Lambda)$ the linear map, 
\begin{align*}
 \mathfrak{g}& \rightarrow \mathfrak{n} \\
 x_1 &\mapsto [\tilde{\phi}(x_1),r(x_2)]
\end{align*}
is bounded over $\log(\Lambda)$. But $\log(\Lambda)$ is relatively dense in $\mathfrak{g}$. So $ [\tilde{\phi}(\cdot),r(x_2)]=0$. Hence, $\im(r)$ and $\im(\tilde{\phi})$ commute. 

Let us check that $\tilde{f}:=\exp \circ \tilde{\phi} \circ \log$ and $\rho := \exp \circ r \circ \log$ satisfy all conditions of Theorem \ref{Lemme de rigidité}. Condition $(iv)$ is satisfied since $r$ is bounded on $\log(\Lambda)$. The map $\tilde{\phi}$ is a homomorphism of Lie algebras, so for $x,y \in \mathfrak{g}$ we have,
\begin{align*}
\log\left(\tilde{f}\left(\exp x\exp y\right)\right) &=\tilde{\phi}\left(\sum_{n = 1}^\infty\sum_{\begin{smallmatrix} r,s \in (\ZZ_{\geq 0})^n  \\ r_i + s_i > 0 \end{smallmatrix}}\alpha_{r,s}[ x^{r_1} y^{s_1} x^{r_2} y^{s_2} \dotsm x^{r_n} y^{s_n} ]\right) \\
                                                      &=\sum_{n = 1}^\infty\sum_{\begin{smallmatrix} r,s \in (\ZZ_{\geq 0})^n  \\ r_i + s_i > 0 \end{smallmatrix}}\alpha_{r,s}[ \tilde{\phi}(x)^{r_1} \tilde{\phi}(y)^{s_1} \dotsm \tilde{\phi}(x)^{r_n} \tilde{\phi}(y)^{s_n} ] \\
                                                     & = \log\left(\exp \tilde{\phi}(x)\exp \tilde{\phi}(y)\right)\\
                                                     & = \log\left(\tilde{f}\left(\exp x\right)\tilde{f}\left(\exp y\right)\right)                                                     
\end{align*}
where the $\alpha_{r,s}$'s are the coefficients of the Baker-Campbell-Hausdorff formula. So $\tilde{f}$ is a Lie group homomorphism and $(i)$ is satisfied. 

Now, let $x,y \in \mathfrak{n}$ be such that $[x,y]=0$. Then according to the Baker-Campbell-Hausdorff formula once again, we know that 
$$\exp(x)\exp(y)=\exp(y)\exp(x)=\exp(x+y).$$
Thus, the image of $\rho$ lies in the centraliser $C(\im(\tilde{f}))$ of $\im(\tilde{f})$ since all elements of $\im(r)$ commute with all elements of $\im(\tilde{\phi})$.

Finally, for all $\gamma \in \Gamma$, $\phi(\log\left(\gamma\right))=\tilde{\phi}(\log\left(\gamma\right)) + r(\log\left(\gamma\right))$. But $\tilde{\phi}(\log\left(\gamma\right))$ and $r(\log\left(\gamma\right))$ commute so 
$$ f(\gamma) = \exp\left(\tilde{\phi}(\log\left(\gamma\right)) + r(\log\left(\gamma\right))\right) = \tilde{f}(\gamma)\rho(\gamma).$$
 \end{proof}
\begin{proof}[Proof of uniqueness]

\begin{Lem}\label{Egalité de deux morphismes à distance bornée sur un réseau approximatif}
Let $f,g:G \rightarrow N$ be Lie group homomorphisms. If $\rest{f(\cdot)g(\cdot)^{-1}}{\Lambda}$ is bounded, then $f=g$.
\end{Lem}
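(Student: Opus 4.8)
The plan is to translate everything into the Lie algebras via $\log$, reduce to the case of an \emph{abelian} target by induction on the nilpotency class of $N$, and in that base case exploit the elementary fact that a linear map which is bounded on a relatively dense subset of a vector space must vanish.

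First I would isolate that elementary fact. Suppose $A$ is a simply connected abelian Lie group, so $\mathfrak{a} \cong \RR^d$, and let $h : G \to A$ be a Lie group homomorphism, written $h = \exp \circ \eta \circ \log$ with $\eta : \mathfrak{g} \to \mathfrak{a}$ linear. Since $A$ is abelian, $\eta$ vanishes on $[\mathfrak{g},\mathfrak{g}]$, so the Baker--Campbell--Hausdorff formula collapses to $\eta(\log(\lambda_1\cdots\lambda_n)) = \sum_{i=1}^n \eta(\log \lambda_i)$ for all $\lambda_1,\ldots,\lambda_n \in \Lambda$. Hence if $h$ is bounded on $\Lambda$, say $\|\eta(\log\lambda)\| \leq M$ for $\lambda \in \Lambda$, then $\eta$ is bounded by $nM$ on $\log(\Lambda^n)$. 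By Lemma \ref{Lorgarithme des ensembles relativement denses} the set $\log(\Lambda^n)$ is relatively dense for $n$ large enough, and a linear map bounded on a relatively dense subset of $\mathfrak{g}$ is identically zero: for $v \in \mathfrak{g}$ and $t>0$ write $tv = d + k$ with $d \in \log(\Lambda^n)$ and $k$ in a fixed compact set, so $t\|\eta(v)\| = \|\eta(d)+\eta(k)\|$ stays bounded as $t \to \infty$, forcing $\eta(v)=0$. Thus $\eta = 0$ and $h$ is trivial.

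Next I would run an induction on the nilpotency class $c$ of $N$, the case $c=0$ being vacuous. For the inductive step, let $Z = \gamma_c(N)$ be the last nontrivial term of the lower central series; it is central, and $N/Z$ has strictly smaller class. Writing $p : N \to N/Z$ for the projection, the maps $p \circ f$ and $p \circ g$ are Lie group homomorphisms with $p\bigl(f(\cdot)g(\cdot)^{-1}\bigr)$ still bounded on $\Lambda$, so the induction hypothesis gives $p \circ f = p \circ g$; equivalently $h := f(\cdot)g(\cdot)^{-1}$ takes values in the central subgroup $Z$. Using that every value of $h$ is central, a direct computation gives $h(xy) = f(x)f(y)g(y)^{-1}g(x)^{-1} = f(x)h(y)g(x)^{-1} = h(x)h(y)$, so $h$ is a Lie group homomorphism into the simply connected abelian group $Z$, bounded on $\Lambda$ by hypothesis. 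The first step then forces $h$ to be trivial, i.e. $f=g$.

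The only genuinely delicate point is the passage from ``bounded on $\Lambda$'' to ``bounded on the relatively dense set $\log(\Lambda^n)$'': boundedness on $\Lambda$ does not survive multiplication in general, and this is precisely why one must first peel off the non-abelian part, so that the relevant map becomes additive on products and the Baker--Campbell--Hausdorff correction terms vanish. Once additivity holds, relative density of $\log(\Lambda^n)$ from Lemma \ref{Lorgarithme des ensembles relativement denses} closes the argument.
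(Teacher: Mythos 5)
Your proof is correct, and it follows the same overall skeleton as the paper's (peel off a central subgroup, reduce to a bounded homomorphism into a vector group), but it handles the crucial analytic step differently. The paper's first move is to upgrade ``$\rest{f(\cdot)g(\cdot)^{-1}}{\Lambda}$ bounded'' to ``$f(\cdot)g(\cdot)^{-1}$ bounded on all of $G$'': writing $x=b\lambda$ with $b$ in a fixed compact set and $\lambda\in\Lambda$ gives $f(x)g(x)^{-1}=f(b)\bigl(f(\lambda)g(\lambda)^{-1}\bigr)g(b)^{-1}$. After that, the induction (which the paper runs on $\dim N$ via the full centre $Z(N)$ rather than on the nilpotency class via $\gamma_c(N)$ --- an immaterial difference) terminates with a homomorphism $G\to Z(N)\simeq\RR^m$ whose image is bounded, and triviality is immediate from $\psi(x^k)=k\psi(x)$, with no further input about $\Lambda$. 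You instead keep only boundedness on $\Lambda$ and, in the abelian target case, use additivity of $\eta=\log\circ h\circ\exp$ on products to bound $\eta$ on $\log(\Lambda^n)$, then invoke Lemma \ref{Lorgarithme des ensembles relativement denses} to get relative density of $\log(\Lambda^n)$ in $\mathfrak{g}$ and kill the linear map. Both arguments are sound; the paper's preliminary step is the more economical one, since it only uses relative density of $\Lambda$ in $G$ directly and avoids any appeal to the machinery on logarithms of powers of $\Lambda$, whereas your route correctly identifies (and resolves) the obstruction that boundedness on $\Lambda$ does not a priori propagate to $\Lambda^n$ before the target has been made abelian.
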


Pick some compact subset $K \subset G$ such that $K\Lambda=G$. Then, for all $x \in G$ there are $\lambda \in \Lambda$ and $b \in K$ such that $x=b\lambda$, so $$f(x)g(x)^{-1}=f(b)f(\lambda)g(\lambda)^{-1}g(b)^{-1}.$$
Therefore, the map $g \mapsto f(g)g(g)^{-1}$ is bounded.

We proceed by induction on $n$ the dimension of $N$ as a Lie group. If $n=1$ the claim is true. Now, assume that the induction hypothesis is true up to a given $n \in \NN$. Let $\pi:N \rightarrow N/Z(N)$ be the canonical projection, then $\pi\circ f=\pi \circ g$ by the induction hypothesis. So $g(g)^{-1}f(g) \in Z(N)$ for all $g \in G$. Therefore for $g_1,g_2 \in G$ we have 
\begin{align*}
 g(g_1g_2)^{-1}f(g_1g_2)& = g(g_2)^{-1}g(g_1)^{-1}f(g_1)f(g_2) \\
			& = g(g_2)^{-1}f(g_2)g(g_1)^{-1}f(g_1).
\end{align*}
So the map $g \mapsto g(g)^{-1}f(g)$ is a group homomorphism. In addition, it has bounded image and target $Z(N)\simeq \RR^m$. As a conclusion, $x \mapsto g(x)^{-1}f(x)$ is the trivial homomorphism. 
\end{proof}

Note that if $\Lambda$ is a uniform lattice then Theorem \ref{Lemme de rigidité} becomes as follows.

\begin{Thm}[\cite{mal1949class}]\label{Rigidité de Malcev}
 Let $\Lambda \subset G$ be a uniform lattice in a simply connected nilpotent Lie group, and $f : \Lambda \rightarrow N$ a group homomorphism with target a simply connected nilpotent Lie group. Then, $f$ extends to a unique Lie group homomorphism $\tilde{f}: G \rightarrow N$. 
\end{Thm}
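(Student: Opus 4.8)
The plan is to read Theorem~\ref{Rigidité de Malcev} off Theorem~\ref{Lemme de rigidité}, since a uniform lattice is a fortiori a uniform approximate lattice. The decisive simplification is that for a genuine subgroup $\Lambda$ one has $\Gamma := \langle \Lambda \rangle = \Lambda$, so the correction homomorphism $\rho$ furnished by Theorem~\ref{Lemme de rigidité} will be bounded on all of $\Gamma$ rather than merely on a relatively dense subset. I expect that single observation to carry the whole argument.

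First I would invoke Theorem~\ref{Lemme de rigidité} for $f : \Gamma \to N$, obtaining a Lie group homomorphism $\tilde{f} : G \to N$ together with a group homomorphism $\rho : \Gamma \to N$ satisfying $f = \rest{\tilde{f}}{\Gamma}\cdot \rho$, with $\im(\rho) \subset C(\im(\tilde{f}))$ and $\rest{\rho}{\Lambda}$ bounded. The map $\tilde{f}$ is the candidate extension; what remains is to show it already agrees with $f$ on $\Lambda$, that is, that $\rho$ is trivial.

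The main step, and the one I regard as the crux, is to kill $\rho$. Because $\Lambda = \Gamma$, condition~(iv) says precisely that the subgroup $\rho(\Gamma) = \im(\rho)$ is bounded in $N$. Its closure is then a compact subgroup of $N$; but a simply connected nilpotent Lie group is diffeomorphic to its Lie algebra under $\exp$, hence torsion-free with no nontrivial compact subgroup. Therefore $\rho \equiv e$ and $f = \rest{\tilde{f}}{\Lambda}$, which is the sought extension. Uniqueness is then immediate from Lemma~\ref{Egalité de deux morphismes à distance bornée sur un réseau approximatif}: any two Lie group homomorphisms $\tilde{f}_1, \tilde{f}_2$ restricting to $f$ on $\Lambda$ satisfy $\rest{\tilde{f}_1(\cdot)\tilde{f}_2(\cdot)^{-1}}{\Lambda} \equiv e$, which is in particular bounded, forcing $\tilde{f}_1 = \tilde{f}_2$.
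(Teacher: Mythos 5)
Your proof is correct, but it is not the paper's: the paper gives no proof of Theorem~\ref{Rigidité de Malcev} at all, simply citing \cite[Theorem 2.11]{raghunathan1972discrete}, and merely remarks beforehand that Theorem~\ref{Lemme de rigidité} ``becomes'' Malcev rigidity when $\Lambda$ is a genuine lattice. What you have done is carry out that reduction in detail, and the logic is sound and non-circular: the proof of Theorem~\ref{Lemme de rigidité} (via Lemma~\ref{Extension à l'anneau de Lie engendré}, Proposition~\ref{Consequence logarithme des ensembles relativement denses} and the abelian rigidity Lemma~\ref{Lemme de rigidité abélien}) nowhere invokes Theorem~\ref{Rigidité de Malcev}, so you may legitimately use it here. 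Your key step is also right: since $\Lambda=\Gamma$, condition (iv) makes $\im(\rho)$ a bounded subgroup, whose closure is a compact subgroup of the simply connected nilpotent group $N$ and hence trivial --- for instance because $\exp$ is a homeomorphism, so for $g\neq e$ the powers $g^n=\exp(n\log g)$ leave every compact set; your torsion-freeness argument also works, since a compact subgroup of a Lie group is a compact Lie group and any nontrivial one has torsion. Uniqueness via Lemma~\ref{Egalité de deux morphismes à distance bornée sur un réseau approximatif} is exactly as you say. As for what each route buys: the paper's citation keeps the classical statement logically independent of its new machinery (and of the analytic input hidden in Lemma~\ref{Lemme de rigidité abélien}), which matters since Theorem~\ref{Rigidité de Malcev} is then used as a black box in the proof of Theorem~\ref{Théorème principal}; your derivation instead makes the paper self-contained on this point and substantiates the author's claim that Theorem~\ref{Lemme de rigidité} genuinely generalises Malcev rigidity rather than merely resembling it.
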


This is the well known Malcev rigidity Lemma, for a proof see \cite[Theorem $2.11$]{raghunathan1972discrete}.

\subsection{Proof of Theorem \ref{Théorème principal}}


The proof of Theorem \ref{Théorème principal} is similar to a proof of Theorem \ref{Meyer's theorem} due to Schreiber (see \cite[Theorem 2]{schreiber1973approximations}). The proof has four steps:
\begin{enumerate}[label=(\roman*)]
 \item we first embed the group $\langle \Lambda \rangle$ in some simply connected group $N$;
 \item then we use the rigidity of lattices to obtain a homomorphism $\pi:N \rightarrow G$;
 \item thanks to the rigidity of approximate lattices we find a section to $\pi$;
 \item finally, with this section we show that $N \simeq G \times H$ for some group $H$ and we get the desired cut-and-project scheme.
\end{enumerate}
In the case of Theorem \ref{Meyer's theorem}, there is $k \in \NN$ such that the groups $\langle \Lambda \rangle$ and $N$ are isomorphic to $\ZZ^k$ and $\RR^k$ respectively. Therefore, steps $(i)$,$(ii)$ and $(iv)$ are elementary while step $(iii)$ is a consequence of Lemma \ref{Lemme de rigidité abélien}. In the nilpotent case, we have to rely on a number of theorems due to Malcev concerning lattices in nilpotent groups to deal with steps $(i)$ and $(ii)$. Step $(iii)$ is a consequence of Theorem \ref{Lemme de rigidité}. Moreover, step $(iv)$ also becomes more intricate when $N$ is not a vector space. Indeed, in the case of Theorem \ref{Meyer's theorem} step $(iv)$ essentially reduces to the following statement that fails to be true for nilpotent Lie groups : every subspace of a vector space has a complement.
\begin{proof}[Proof of Theorem \ref{Théorème principal}.]
 The subgroup $\langle \Lambda \rangle$ generated by $\Lambda$ is finitely generated \cite[Theorem $3.4$]{bjorklund2016approximate}, nilpotent and torsion-free. According to a theorem of Malcev it is therefore isomorphic as an abstract group to a lattice $\Gamma$ in a simply connected nilpotent Lie group $N$ (see \cite[Theorem $2.18$]{raghunathan1972discrete}). Let $i: \langle \Lambda \rangle \rightarrow \Gamma$ be an isomorphism.
 
 According to Malcev rigidity (Theorem \ref{Rigidité de Malcev}) there is a unique Lie group homomorphism $\pi: N \rightarrow G$ extending $i^{-1}$. The morphism $\pi$ has connected co-compact image in $G$, so $\pi$ is onto. 
 
 Now, let $\tilde{i}$ and $\rho$ be given by Theorem \ref{Lemme de rigidité} applied to $i$. First of all, let us prove that $\tilde{i}$ is a section of $\pi$. Indeed, both $\pi \circ \tilde{i}$ and $\Id_G$ satisfy the conditions $(i)-(iv)$ of Theorem \ref{Lemme de rigidité} applied to $\pi \circ i$, so $\pi \circ \tilde{i}= \Id_G$. In particular, $\im(\rho) \subset \ker(\pi)$.
 
 Define $H:= \ker(\pi)$ and let us show that $N = H\times \tilde{i}(G)$. The group homomorphism $\rho \circ i^{-1}$ extends to a unique morphism $\tilde{\rho}: N \rightarrow N$ according to Theorem \ref{Rigidité de Malcev}. Moreover, by construction $\im(\rho)$ is co-compact in $\im(\tilde{\rho})$. Since $\im(\tilde{\rho})$ and $C(\im(\tilde{i}))$ are connected the subgroup $\im(\tilde{\rho}) \cap C(\im(\tilde{i}))$ is connected (\cite[Lemma $2.4$]{raghunathan1972discrete}). In addition, since $\im(\rho) \subset C(\im(\tilde{i}))$ the subgroup $\im(\tilde{\rho}) \cap C(\im(\tilde{i}))$ contains $\im(\rho)$ as a co-compact subgroup. Hence, $$\im(\tilde{\rho}) \cap C(\im(\tilde{i}))= \im(\tilde{\rho})$$ according to \cite[Theorem $2.1$]{raghunathan1972discrete}. So the inclusion $\im(\tilde{\rho}) \subset C(\im(\tilde{i}))$ holds. Similarly one can prove that $\im(\tilde{\rho}) \subset H$.
 
 Now, the map
 \begin{align*}
   & N  \rightarrow N \\
& n  \mapsto \tilde{\rho}(n)(\tilde{i}\circ\pi)(n)
 \end{align*}
 is a group homomorphism that induces the identity on $\Gamma$. So for all $n \in N$, $\tilde{\rho}(n)(\tilde{i}\circ\pi)(n)=n$ according to Theorem \ref{Rigidité de Malcev}. 
 
 Now, we see that $\im(\tilde{\rho}) = H$ and $N = H \times \tilde{i}(G)$. Furthermore, $\pi_H = \tilde{\rho}$ and $\pi_{\tilde{i}(G)}=p$. 
 
 Finally, $\rho(\Lambda)$ is bounded so any compact neighbourhood $W_0$ of $e$ containing $\rho(\Lambda)$ works. 
 
\end{proof}

\subsection{From Theorem \ref{Théorème principal} to Corollary \ref{Théorème de Meyer pour les groupes de Lie nilpotents}}

Recall the statement of Corollary \ref{Théorème de Meyer pour les groupes de Lie nilpotents}. 

\begin{Cor*}
 Let $\Lambda \subset G$ be an approximate lattice in a connected nilpotent Lie group $G$. Then there exists a simply connected nilpotent Lie group $H$ such that $\Lambda$ is a Meyer set given by a cut-and-project scheme $(G,H,\Gamma)$. 
\end{Cor*}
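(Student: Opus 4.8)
The plan is to bootstrap from the simply connected case, which is exactly Theorem \ref{Théorème principal}, by passing to the universal cover of $G$ and then pushing the resulting cut-and-project scheme back down. Let $p\colon \tilde{G}\to G$ be the universal covering. Since $G$ is connected nilpotent, $\tilde{G}$ is a simply connected nilpotent Lie group and $D:=\ker(p)$ is a discrete central subgroup, isomorphic to $\mathbb{Z}^{r}$; concretely $D$ is a lattice in the real span $V_{D}\cong\mathbb{R}^{r}$ inside $Z(\tilde{G})$, and $V_{D}/D$ is the maximal compact (central torus) subgroup of $G$. So the non-simply-connectedness of $G$ is entirely encoded by $D$.

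The first step is to lift $\Lambda$ to an approximate lattice $\tilde{\Lambda}\subset\tilde{G}$ with $p(\tilde{\Lambda})=\Lambda$. Uniform discreteness of any such lift is immediate: $p$ is a local homeomorphism and, by Proposition \ref{Caractérisation des réseaux approximatifs uniformes}, each $\Lambda^{k}$ is discrete, so distinct lifted points stay separated near the identity. Relative density lifts by choosing a relatively compact lift of a compact transversal to $p$. The subtle point is the approximate subgroup property: a product $\tilde{\lambda}_{1}\tilde{\lambda}_{2}$ lies over $\lambda_{1}\lambda_{2}\in\Lambda^{2}\subset F\cdot\Lambda$, so it differs from the chosen lift of $\lambda_{1}\lambda_{2}$ by an element of $D$, and one must check this $D$-valued cocycle takes only finitely many values; this follows from discreteness of the powers $\Lambda^{k}$ together with relative density. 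Note that, since $\tilde{G}$ is torsion free, $\langle\tilde{\Lambda}\rangle$ is a finitely generated torsion-free nilpotent group, which is precisely the setting of Theorem \ref{Théorème principal}.

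Applying Theorem \ref{Théorème principal} to $\tilde{\Lambda}$ now yields a simply connected nilpotent Lie group $H$, an isomorphism of $\langle\tilde{\Lambda}\rangle$ onto a lattice $\tilde{\Gamma}<\tilde{G}\times H$, a star map $\tau$, and a compact neighbourhood $\tilde{W}_{0}\subset H$ with $\tilde{\Lambda}\subset\pi_{\tilde{G}}\bigl((\tilde{G}\times\tilde{W}_{0})\cap\tilde{\Gamma}\bigr)$. It remains to descend the triple $(\tilde{G},H,\tilde{\Gamma})$ along $\tilde{G}\to\tilde{G}/D=G$. I would quotient $\tilde{G}\times H$ by $D\times\{e\}$ and set $\Gamma$ to be the image of $\tilde{\Gamma}$. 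The projection $\pi_{H}$ is untouched, so its image remains dense and $\tilde{W}_{0}$ still bounds the window; $\pi_{G}$ stays injective on $\Gamma$ because it was injective upstairs; and the image of $\tilde{\Lambda}$ is exactly $\Lambda$, landing in $\pi_{G}\bigl((G\times\tilde{W}_{0})\cap\Gamma\bigr)$. Once $\Gamma$ is shown to be a lattice, Proposition \ref{Les ensembles modèles sont des réseaux approximatifs uniformes}(4) shows it is a uniform lattice (as $\Lambda$ is relatively dense), and Corollary \ref{relatively dense subset of a uniform approximate lattice} then exhibits $\Lambda$ as a Meyer set in $G$ given by $(G,H,\Gamma)$.

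The main obstacle is precisely this last descent, that is, controlling $D$ so that $\Gamma$ is a genuine \emph{discrete} subgroup with \emph{simply connected} internal space $H$. The image of $\tilde{\Gamma}$ in $G\times H$ is discrete if and only if $\tilde{\Gamma}\cdot(D\times\{e\})$ is discrete in $\tilde{G}\times H$, and unwinding this shows the condition is that $D$ must appear inside $\langle\tilde{\Lambda}\rangle$ with discrete — ideally trivial — internal coordinate $\tau(D)$; otherwise $\{e\}\times\tau(D)$ lies in the closure and the quotient collapses lattice points. Here is where the rigidity Theorem \ref{Lemme de rigidité} enters: I would use it to adjust the star map on the subgroup generated by the $D$-valued cocycles so that $\tau$ vanishes on $D$, making $D\times\{e\}\subset\tilde{\Gamma}$ and the quotient clean. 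This is exactly the geometric content coming from the central torus $V_{D}/D$ of $G$, and it is the step that has no analogue in the simply connected case; the density of $\pi_{H}(\Gamma)$, discreteness of $\Gamma$, and boundedness of the window all fall out once $D$ is absorbed correctly (the abelian prototype being Theorem \ref{Meyer's theorem}).
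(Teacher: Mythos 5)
Your overall strategy---pass to the universal cover $p\colon\tilde{G}\to G$, apply Theorem \ref{Théorème principal} there, then descend the cut-and-project scheme modulo $D=\ker(p)$---is the same as the paper's. But the execution has a genuine gap at the very first step: you lift $\Lambda$ by a \emph{section} (one chosen lift per point, hence the ``$D$-valued cocycle'' you need to control), and such a lift cannot in general be made into an approximate lattice. Already for $G=\RR\times(\RR/\ZZ)$, $\tilde{G}=\RR^2$, $D=\{0\}\times\ZZ$ and $\Lambda=\ZZ\times\{0\}$, a section-lift is $\{(n,f(n)):n\in\ZZ\}$ with $f\colon\ZZ\to\ZZ$; the approximate subgroup property forces $f$ to be a quasimorphism, hence at bounded distance from a linear map, and then the lift lies in a bounded neighbourhood of a line and is \emph{not} relatively dense in $\RR^2$. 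So your claims that ``relative density lifts'' and that the cocycle ``takes only finitely many values'' cannot both hold when $D$ is infinite. The paper avoids this entirely by taking the \emph{full preimage} $p^{-1}(\Lambda)$ (Proposition \ref{Réseaux approximatifs uniformes et revêtements}): since $D\cdot p^{-1}(\Lambda)=p^{-1}(\Lambda)$, one gets $p^{-1}(\Lambda)^2\subset p^{-1}(F\cdot\Lambda)\subset\tilde{F}\cdot p^{-1}(\Lambda)$ for any finite set $\tilde{F}$ of lifts of $F$, uniform discreteness follows from discreteness of $p^{-1}(\Lambda^2)$, and relative density follows from your own ``compact lift of a compact transversal'' argument---there is no cocycle to control.

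The second problem is the descent. You propose to invoke Theorem \ref{Lemme de rigidité} to ``adjust the star map so that $\tau$ vanishes on $D$,'' but the cut-and-project scheme handed to you by Theorem \ref{Théorème principal} is fixed, and the rigidity theorem decomposes homomorphisms \emph{into} simply connected nilpotent groups; it is not a tool for modifying $\tau$. The point you are missing is that no adjustment is needed: with the full preimage, $D\subset p^{-1}(\Lambda)$ (because $e\in\Lambda$), so $\tau(D)$ lands inside the window $\tilde{W}_0$ and is therefore a subgroup of $H$ with compact closure; since $H$ is simply connected nilpotent it has no non-trivial compact subgroups, whence $\tau(D)=\{e\}$, i.e.\ $D\times\{e\}\subset\tilde{\Gamma}$ and the quotient $\tilde{\Gamma}/(D\times\{e\})$ is automatically a uniform lattice in $G\times H$ with the same window. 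This use of the absence of compact subgroups of $H$ is the one piece of the argument with no analogue in the simply connected case, and it is absent from your proposal.
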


The proof relies on the following general result. 

\begin{Prop}\label{Réseaux approximatifs uniformes et revêtements}
Let $p:G \rightarrow B$ be a covering of locally compact groups, and $\Lambda \subset B$ a uniform approximate lattice. Then
\begin{enumerate}
 \item $p^{-1}(\Lambda)$ is a uniform approximate lattice in $G$;
 \item if $p^{-1}(\Lambda)$ is a Meyer set with respect to a cut-and-project scheme $(G,H,\Gamma)$ such that $H$ has no non-trivial compact subgroup, then $\Lambda$ is a Meyer set with respect to the cut-and-project scheme $(B,H,\Gamma/(\ker(p)\times \{e\}))$. 
\end{enumerate}

\end{Prop}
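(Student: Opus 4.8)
The plan is to treat the two parts separately, deriving (1) from the characterisation of uniform approximate lattices and reducing (2) to a single structural observation about $\ker(p)$. Throughout write $\Delta:=\ker(p)$, a discrete normal subgroup of $G$ with $B=G/\Delta$, and set $q:=p\times\mathrm{id}_H\colon G\times H\to B\times H$, which is again a covering with $\ker q=\Delta\times\{e\}$.

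For part (1), I would verify the hypotheses of Proposition \ref{Caractérisation des réseaux approximatifs uniformes} for $p^{-1}(\Lambda)$. Symmetry and $e\in p^{-1}(\Lambda)$ are immediate since $p$ is a homomorphism and $\Lambda$ is symmetric with $e\in\Lambda$. For relative density I would lift a compact set: given compact $K\subset B$ with $\Lambda K=B$, a standard covering-space argument (a finite cover of $K$ by evenly covered opens together with continuous local sections) yields a compact $\tilde K\subset G$ with $p(\tilde K)=K$; then for $g\in G$ one writes $p(g)=\lambda k$ and picks $\tilde k\in\tilde K$ over $k$, so that $g\tilde k^{-1}\in p^{-1}(\Lambda)$ and hence $p^{-1}(\Lambda)\,\tilde K=G$. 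For discreteness I would use that $p$ is a homomorphism, so $(p^{-1}(\Lambda))^{k}\subseteq p^{-1}(\Lambda^{k})$; since $\Lambda^{k}$ is discrete and $p$ is a local homeomorphism, $p^{-1}(\Lambda^{k})$ is discrete, and therefore so is its subset $(p^{-1}(\Lambda))^{6}$. Proposition \ref{Caractérisation des réseaux approximatifs uniformes} then gives (1).

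For part (2), the heart of the matter is to show $\Delta\times\{e\}\subseteq\Gamma$; this is the only step using the hypothesis that $H$ has no non-trivial compact subgroup, and I expect it to be the main obstacle, everything else being formal. Since $e\in\Lambda$ we have $\Delta=p^{-1}(\{e\})\subseteq p^{-1}(\Lambda)$, and as $p^{-1}(\Lambda)$ is a Meyer set for $(G,H,\Gamma)$ there is a compact window $W_{0}$ with $p^{-1}(\Lambda)\subseteq P_{0}(G,H,\Gamma,W_{0})=\tau^{-1}(W_{0})\subseteq\pi_G(\Gamma)$. Because $\Delta$ is a subgroup and $\tau$ a homomorphism, $\tau(\Delta)$ is a subgroup of $H$ contained in the compact set $W_{0}$, so its closure is a compact subgroup of $H$, hence trivial; thus $\tau(\Delta)=\{e\}$. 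Since $\pi_G|_{\Gamma}$ is injective, the unique element of $\Gamma$ lying over $\delta\in\Delta$ is $(\delta,\tau(\delta))=(\delta,e)$, giving $\Delta\times\{e\}\subseteq\Gamma$ and legitimising $\Gamma':=q(\Gamma)=\Gamma/(\Delta\times\{e\})$.

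It then remains to check that $(B,H,\Gamma')$ is a cut-and-project scheme and that $\Lambda$ is a relatively dense subset of one of its model sets. Discreteness of $\Gamma'$ follows since $\ker q=\Delta\times\{e\}\subseteq\Gamma$ forces $q^{-1}(\Gamma')=\Gamma$, and a sheetwise argument (the deck group of $q$ is $\Delta\times\{e\}$, itself inside $\Gamma$) maps a $\Gamma$-isolating neighbourhood to a $\Gamma'$-isolating one; the identity $(B\times H)/\Gamma'\cong(G\times H)/\Gamma$ shows $\Gamma'$ is a lattice, uniform because $p^{-1}(\Lambda)\subseteq P_{0}$ is relatively dense and Proposition \ref{Les ensembles modèles sont des réseaux approximatifs uniformes}$(4)$ forces $\Gamma$ uniform. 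Injectivity of $\pi_B|_{\Gamma'}$ reduces to the observation that two elements of $\Gamma$ with the same image under $\pi_B\circ q=p\circ\pi_G$ differ by an element over some $\delta\in\Delta$, which by the previous paragraph equals $(\delta,e)$, so their $H$-coordinates coincide; and $\pi_H(\Gamma')=\pi_H(\Gamma)$ is dense because $\pi_H\circ q=\pi_H$. Finally, the star map $\tau'$ of the new scheme satisfies $\tau'\circ p=\tau$ on $\pi_G(\Gamma)$, so for $\lambda\in\Lambda$ any lift $\tilde\lambda\in p^{-1}(\lambda)\subseteq p^{-1}(\Lambda)$ gives $\lambda\in\pi_B(\Gamma')$ with $\tau'(\lambda)=\tau(\tilde\lambda)\in W_{0}$; hence $\Lambda\subseteq(\tau')^{-1}(W_{0})=P_{0}(B,H,\Gamma',W_{0})$. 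As $\Lambda$ is relatively dense in $B$, it is a relatively dense subset of a model set, i.e. a Meyer set for $(B,H,\Gamma')$, as claimed.
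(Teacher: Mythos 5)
Your proposal is correct and follows essentially the same route as the paper's proof: lifting a compact set for relative density, using $(p^{-1}(\Lambda))^k\subseteq p^{-1}(\Lambda^k)$ for discreteness, and in part (2) observing that $\overline{\tau(\ker p)}$ is a compact subgroup of $H$, hence trivial, so that $\ker(p)\times\{e\}\subseteq\Gamma$ and one may push $\Gamma$ forward along $p\times\Id_H$. You in fact verify a few points the paper leaves implicit (injectivity of $\pi_B$ on $\Gamma'$, density of $\pi_H(\Gamma')$, and the compatibility $\tau'\circ p=\tau$), which is welcome but not a different argument.
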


\begin{proof}
\begin{enumerate}
\item First of all, $p^{-1}(\Lambda)$ is symmetric. For all $n \in \NN$, the set $\Lambda^n$ is discrete. As $p$ is a local homeomorphism $p^{-1}(\Lambda)^n$ is also discrete. According to Proposition \ref{Caractérisation des réseaux approximatifs uniformes} it suffices to prove that $p^{-1}(\Lambda)$ is relatively dense. Let $K \subset B$ be a compact set such that $\Lambda K= B$. As $p$ is a covering and $G$ is locally compact there is a compact subset $L \subset G$ such that $K \subset p(L)$. Now, for all $x \in G$ there is $\lambda \in \Lambda$ and $k \in K$ such that $p(x)=\lambda k$. Let $l \in L$ be such that $p(l)=k$, then $p(xl^{-1}) = \lambda \in \Lambda$ so $p^{-1}(\Lambda) L = G$.

\item Pick some compact set $W_0\subset G$ such that $p^{-1}(\Lambda)\subset\pi_G(\left(G\times W_0\right)\cap \Gamma )$. As $\ker(p)\subset p^{-1}(\Lambda)$ we have $\overline{\tau(\ker(p))} \subset W_0$, so $\overline{\tau(\ker(p))}$ is compact. By assumption,  $\tau(\ker(p))=\{e\}$ which implies $$\ker(p\times \Id_H)=\ker(p)\times \{e\} \subset \Gamma.$$
We then get 
$$\Lambda\subset\pi_G(\left(B\times W_0\right)\cap \left(p\times\Id_H(\Gamma)\right) ).$$
But $p$ is a covering map so $p\times\Id_H : G \times H \rightarrow B \times H$ is a surjective continuous homomorphism and is a local homeomorphism. Since $p\times\Id_H$ is a local homeomorphism the subgroup $p\times\Id_H(\Gamma)$ is discrete. Moreover, $p\times\Id_H(\Gamma)$ is co-compact as $p\times\Id_H$ is a surjective continuous homomorphism and $\Gamma$ is co-compact. So $p\times\Id_H(\Gamma)$ is a uniform lattice in $B\times H$.
\end{enumerate}
\end{proof}

\begin{proof}[Proof of Corollary \ref{Théorème de Meyer pour les groupes de Lie nilpotents}.]
 Let $N$ be the universal cover of the connected group $G$ and $p:N \rightarrow G$ be the associated covering map. The group $N$ is a simply connected nilpotent Lie group, and $p^{-1}(\Lambda) \subset N$ is an approximate lattice according to Part $1$ of Proposition \ref{Réseaux approximatifs uniformes et revêtements}. So, by Theorem \ref{Théorème principal}, the approximate lattice $p^{-1}(\Lambda)$ is a Meyer set with respect to a cut-and-project scheme $(N,H,\Gamma)$ where $H$ is a simply connected nilpotent Lie group. In particular, $H$ has no non-trivial subgroup. According to Part $2$ of Proposition \ref{Réseaux approximatifs uniformes et revêtements} the approximate subgroup is a Meyer set with respect to the cut-and-project scheme $(G,H,\Gamma/(\ker(p)\times \{e\}))$.
\end{proof}

As an easy consequence we show the following corollary. 

\begin{Cor}\label{Intersection uniform approximate lattice and derived subgroup}
 Let $\Lambda \subset G$ be an approximate lattice in a nilpotent Lie group and $p: G \rightarrow G/[G;G]$ the canonical projection. Then,
 \begin{enumerate}
  \item the set $p(\Lambda)$ is an approximate lattice in $G/[G,G]$;
  \item $\Lambda^2 \cap [G,G]$ is an approximate lattice in $[G,G]$.
 \end{enumerate}
\end{Cor}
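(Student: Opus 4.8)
The plan is to handle both statements with one device, Proposition~\ref{Caractérisation des réseaux approximatifs uniformes}: a relatively dense symmetric subset containing the identity is a uniform approximate lattice as soon as all of its powers (equivalently its sixth power) are discrete, so the approximate-subgroup axiom never has to be checked by hand. For each part I therefore only verify relative density in the relevant group and discreteness of all powers. We may assume $G$ is simply connected, the general connected case following by passing to the universal cover as in Proposition~\ref{Réseaux approximatifs uniformes et revêtements}. Write $D:=[G,G]$ and let $p:G\rightarrow G/D$ be the projection.

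For the first part relative density is immediate: if $\Lambda K=G$ with $K$ compact then $p(\Lambda)\,p(K)=G/D$ with $p(K)$ compact, and $p(\Lambda)$ is symmetric and contains $e$. The substance is the discreteness of $p(\Lambda)^n=p(\Lambda^n)$. Here I would invoke the model-set structure from Theorem~\ref{Théorème principal} (resp. Corollary~\ref{Théorème de Meyer pour les groupes de Lie nilpotents}): there is a cut-and-project scheme $(G,H,\Gamma)$ with $H$ simply connected nilpotent and a compact window $W_0$ such that $\Lambda\subseteq\tau^{-1}(W_0)$, whence $\Lambda^n\subseteq\tau^{-1}(W_0^n)=\pi_G\big((G\times W_0^n)\cap\Gamma\big)$ since $\tau$ is a homomorphism on $\pi_G(\Gamma)\supseteq\langle\Lambda\rangle$. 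I then push everything into the abelianisation of $N:=G\times H$. As $[N,N]=[G,G]\times[H,H]$ is rational with respect to the lattice $\Gamma$ (the commutator subgroup of a simply connected nilpotent group is rational for every lattice), the image $q(\Gamma)$ under $q:=p\times p_H:N\rightarrow(G/[G,G])\times(H/[H,H])$ is a lattice in this vector group, where $p_H:H\rightarrow H/[H,H]$. For $\gamma\in\Gamma$ with $\pi_H(\gamma)\in W_0^n$ the point $q(\gamma)$ lies in $q(\Gamma)$ and has second coordinate in the compact set $p_H(W_0)^n$; thus $p(\Lambda^n)$ is contained in the projection to the first factor of $q(\Gamma)$ intersected with a bounded slab. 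Such a projection is uniformly discrete: if two of its points were close, their difference would be an element of the discrete set $q(\Gamma)$ confined to a fixed bounded region, hence one of finitely many values. So $p(\Lambda^n)$ is discrete and Proposition~\ref{Caractérisation des réseaux approximatifs uniformes} applies.

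For the second part set $\Delta:=\Lambda^2\cap D$. It is symmetric, contains $e$, and every power satisfies $\Delta^k\subseteq\Lambda^{2k}$, which is discrete because $\Lambda$ is a uniform approximate lattice; discreteness is thus free and only relative density of $\Delta$ in $D$ remains. Fix $d\in D$. Relative density of $\Lambda$ gives $\lambda\in\Lambda$ and $k\in K$ with $d=\lambda k$; applying $p$ and using $p(d)=e$ shows $p(\lambda)=p(k)^{-1}\in p(\Lambda)\cap p(K^{-1})$. By the first part $p(\Lambda)$ is uniformly discrete, so this intersection is a finite set $\{t_1,\dots,t_m\}$, and for each $t_j$ I fix once and for all some $\nu_j\in\Lambda$ with $p(\nu_j)=t_j$. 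If $p(\lambda)=t_j$ then $\delta:=\lambda\nu_j^{-1}\in\Lambda^2$ satisfies $p(\delta)=e$, so $\delta\in\Delta$, and $d=\delta\,(\nu_j k)$ with $\nu_j k=\delta^{-1}d\in D$ lying in the compact set $(\{\nu_1,\dots,\nu_m\}\cdot K)\cap D$. Hence $D=\Delta\cdot C$ for a fixed compact $C$, i.e. $\Delta$ is relatively dense, and Proposition~\ref{Caractérisation des réseaux approximatifs uniformes} again gives the claim.

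The main obstacle is the discreteness in the first part. Projecting an approximate lattice through a quotient map does not preserve discreteness in general — the image of a lattice under a linear projection with irrational kernel is the standard counterexample — so one genuinely needs the cut-and-project description of $\Lambda$ together with the rationality of $[N,N]$, and this is where the real content sits. I would also flag the mild subtlety in the second part: approximating $d$ by \emph{conjugating} an element of $\Delta$ would land in $\Lambda^4\cap D$ rather than $\Lambda^2\cap D$, whereas using fixed coset representatives $\nu_j$ keeps the construction inside $\Lambda^2\cap D$ and so matches the power in the statement.
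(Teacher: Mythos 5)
Your argument is correct, and the two parts relate to the paper's proof in different ways. For part (1) you and the paper do essentially the same thing: both pass to the cut-and-project description of $\Lambda$, push the scheme down to the abelianisations via $p\times p_H$, and use the fact that $[G,G]\times[H,H]$ is rational with respect to $\Gamma$ (the paper cites Raghunathan, Corollary~1 to Theorem~2.3 together with Theorem~1.13) to see that the image of $\Gamma$ is a lattice in the vector group, whence uniform discreteness of $p(\Lambda)$; your extra care with $p(\Lambda^n)\subset\tau^{-1}(W_0^n)$ for all $n$ is a harmless variant of the paper's appeal to Proposition~\ref{Les ensembles modèles sont des réseaux approximatifs uniformes} plus the fact that $p(\Lambda)$ is already an approximate subgroup. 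For part (2) you genuinely diverge: the paper simply quotes \cite[Theorem~$3.3$]{bjorklund2018spectral}, a general statement that once the projection of a uniform approximate lattice to a quotient is a uniform approximate lattice, the intersection $\Lambda^2\cap\ker p$ is one in the kernel, whereas you reprove the relevant instance from scratch. Your argument is sound: discreteness of all powers is free from $\Delta^k\subset\Lambda^{2k}$, the finiteness of $p(\Lambda)\cap p(K)^{-1}$ follows from the uniform discreteness established in part (1), and your use of fixed coset representatives $\nu_j$ correctly keeps the construction inside $\Lambda^2$ rather than $\Lambda^4$ --- this is exactly the standard proof of such intersection theorems, and making it explicit buys self-containedness at the cost of a paragraph. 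Two minor remarks: the detour through the universal cover is unnecessary, since Corollary~\ref{Théorème de Meyer pour les groupes de Lie nilpotents} already furnishes the cut-and-project scheme for connected $G$ (as you note parenthetically); and in part (2) one should observe that $[G,G]$ is closed so that $(\{\nu_1,\dots,\nu_m\}\cdot K)\cap[G,G]$ is indeed compact, which is automatic in the simply connected case.
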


\begin{proof}
According to Corollary \ref{Théorème de Meyer pour les groupes de Lie nilpotents} there are a nilpotent Lie group $H$, a lattice $\Gamma < G \times H$ and a symmetric compact neighbourhood $W_0$ of $e$ in $H$ such that $\Lambda \subset \pi_G\left(\left(G\times W_0\right) \cap \Gamma\right)$.
 \begin{enumerate} 
 
 \item As $\Lambda$ is a relatively dense approximate subgroup, so is $p(\Lambda)$. Let $q: H \rightarrow H/[H,H]$ be the natural projection. We have
  $$p(\Lambda) \subset \pi_{G/[G,G]}((G/[G,G] \times q(W_0)) \cap (p \times q)(\Gamma))$$ where $\pi_{G/[G,G]}$ is the projection to the first factor. We know that $\Gamma \cap [G,G]\times [H,H]$ is a uniform lattice in $[G,G]\times [H,H]$ (\cite[Corollary $1$ to Theorem $2.3$]{raghunathan1972discrete}). By \cite[Theorem $1.13$]{raghunathan1972discrete} this implies that $\left(p \times q \right)(\Gamma)$ is a uniform lattice in $G/[G,G] \times H/[H,H]$. Thus, $p(\Lambda)$ is uniformly discrete according to Part $1$ of Proposition \ref{Les ensembles modèles sont des réseaux approximatifs uniformes}. The set $p(\Lambda)$ is therefore an approximate lattice according to Proposition \ref{Caractérisation des réseaux approximatifs uniformes}. 
  
  \item Since $p(\Lambda)$ is an approximate lattice, the set $\Lambda^2 \cap [G,G]$ is an approximate lattice in $[G,G]$ according to \cite[Theorem $3.3$]{bjorklund2018spectral}.
 
 \end{enumerate}
\end{proof}

\begin{Rem}
 According to \cite[Lemma $10.3$]{MR3090256}, for any locally compact group $G$, any uniform approximate lattice $\Lambda \subset G$ and any subgroup $H \subset G$, the set $\Lambda^2 \cap H$ is a uniformly discrete approximate subgroup. It is therefore natural to ask whether $\Lambda^2 \cap H$ is a uniform approximate lattice in $H$. Corollary \ref{Intersection uniform approximate lattice and derived subgroup} answers this question positively when $G$ is a simply connected nilpotent Lie group and $H$ is the derived subgroup. It turns out that similar results hold when $G$ is a Lie group and $H$ is the centraliser of $\Lambda$, or under reasonable hypotheses when $G$ is a Lie group and $H$ is its largest connected nilpotent subgroup (see \cite{machado2019goodmodels}).
\end{Rem}

\section{Criterion for Existence}

%
%
%
%
%
%
%

In this section we prove Theorem \ref{Existence d'un réseau approximatif}. Let us first recall the definition of structure constants. 

\begin{Def}
 Let $\mathfrak{g}$ be a Lie algebra over a field $K$, and $(e_i)$ a basis of $\mathfrak{g}$ as a $K$-vector space. The coordinates in the basis $(e_i)$ of the elements $([e_i,e_j])_{i,j}$ are called \emph{structure constants}. 
\end{Def} 

To prove Theorem \ref{Existence d'un réseau approximatif} we will need to work with Lie algebras over varying ground fields.

\begin{Def}[Chapter 1, \S 1.9, \cite{MR0132805}]
 Let $K\hookrightarrow L  $ be a field extension and $\mathfrak{g}$ a $K$-Lie algebra. Then the $L$-vector space $\mathfrak{g}\otimes L$ can be endowed with a $L$-Lie algebra structure by extending linearly the Lie bracket of $\mathfrak{g}$. We will denote by $\mathfrak{g}(L)$ the $L$-Lie algebra obtained this way. 
\end{Def}

\begin{Rem}\label{Remarque algebres de Lie}
This operation is well behaved. For instance we have $(\mathfrak{g}\oplus\mathfrak{h})(L)=\mathfrak{g}(L)\oplus\mathfrak{h}(L)$ (\cite[Chapter 2, \S 3.7, Corollary 3]{MR0274237}) and for any extension $ L \hookrightarrow M$ we have $(\mathfrak{g}(L))(M)=\mathfrak{g}(M)$ (\cite[Chapter 2, \S 5.1, Proposition 2]{MR0274237}). Moreover, a $L$-Lie algebra $\mathfrak{g}$ has a basis with structure constants lying in a subfield $K\subset L$ if and only if there is a $K$-Lie algebra $\mathfrak{h}$ such that $\mathfrak{h}(L)\simeq \mathfrak{g}$ (\cite[Chapter 1, \S 1.9]{MR0132805}). Finally, we will often identify $\mathfrak{g}$ with its image by the map 
 \begin{align*}
  \mathfrak{g} &\rightarrow \mathfrak{g}(L) \\
  x &\mapsto x \otimes 1.
 \end{align*}
\end{Rem}

If a simply connected nilpotent Lie group $G$ contains an approximate lattice, then there is a simply connected nilpotent Lie group $H$ such that $G \times H$ contains a uniform lattice, according to Theorem \ref{Théorème principal}. Thus, $\mathfrak{g}\oplus \mathfrak{h}$ has a basis with rational structure constants by Theorem \ref{CNS existence d'un réseau uniforme}. 

\begin{Def}
 Let $\mathfrak{g}$ be a non-trivial $K$-Lie algebra, then $\mathfrak{g}$ is \emph{indecomposable} if there are no non-trivial $K$-Lie algebras $\mathfrak{g_1},\mathfrak{g_2}$ such that $\mathfrak{g_1} \oplus \mathfrak{g_2} \simeq \mathfrak{g}$. 
\end{Def}

In fact, any Lie algebra can be written in an essentially unique manner as a product of indecomposable Lie subalgebras.

\begin{Prop}[Theorem $3.3$, \cite{fisher2013automorphisms}]\label{Decomposition en indecomposables}
 Let $\mathfrak{g}$ be a $K$-Lie algebra, then there are indecomposable $K$-Lie subalgebras $I_1,\ldots,I_r\subset \mathfrak{g}$ such that:
 $$ \mathfrak{g} = I_1\oplus \cdots \oplus I_r.$$
 Moreover, if $J_1,\ldots,J_s\subset \mathfrak{g}$ are other indecomposable $K$-Lie subalgebras such that $ \mathfrak{g} = J_1 \oplus \cdots \oplus J_s $ then $s=r$ and there is a bijection $\sigma:\{1,\ldots,r\} \rightarrow \{1,...,r\}$ such that $\forall i \in \left\{1,\ldots,r\right\}$:
 $$ I_i \simeq J_{\sigma(i)}.$$ 
\end{Prop}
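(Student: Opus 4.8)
The plan is to recognise this as a Krull--Schmidt (Remak) theorem and to prove it by passing to the adjoint module. The first observation is that direct-product decompositions of $\mathfrak{g}$ into ideals are exactly the same data as direct-sum decompositions of $\mathfrak{g}$ viewed as a module over itself via the adjoint action. Indeed, a subspace $I\subseteq\mathfrak{g}$ is a submodule for $\mathrm{ad}$ precisely when it is an ideal, and if $\mathfrak{g}=I\oplus J$ is a decomposition into ideals then $[I,J]\subseteq I\cap J=0$, so the two factors commute and the sum is a direct product of Lie algebras. Under this dictionary a subalgebra $I_k$ is indecomposable as a Lie algebra if and only if it is indecomposable as an $\mathrm{ad}$-module, and the projection $\pi_{I_k}\colon\mathfrak{g}\to I_k$ is simultaneously a module endomorphism (it commutes with every $\mathrm{ad}(x)$) and a Lie algebra homomorphism (the cross terms $[I_i,I_j]$, $i\neq j$, vanish). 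This double nature of the projections is what will let the argument return Lie isomorphisms rather than mere linear ones.

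Existence is then routine, by induction on $\dim_K\mathfrak{g}$: if $\mathfrak{g}$ is indecomposable we are done, and otherwise we split off a nontrivial ideal factor and apply the inductive hypothesis to each factor.

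For uniqueness I would run the classical exchange argument inside the associative endomorphism ring $E:=\End_{\mathrm{ad}}(\mathfrak{g})$ of $\mathfrak{g}$ as an adjoint module. Because $\mathfrak{g}$ is finite-dimensional it has finite length as a module, so Fitting's lemma applies and the endomorphism ring of each indecomposable summand is local. Given two decompositions $\mathfrak{g}=I_1\oplus\cdots\oplus I_r=J_1\oplus\cdots\oplus J_s$, write, inside $\End_{\mathrm{ad}}(I_i)$,
$$ \Id_{I_i}=\sum_{j}\left(\rest{\pi_{I_i}}{J_j}\right)\circ\left(\rest{\pi_{J_j}}{I_i}\right). $$
Since the left-hand side is a unit of the local ring $\End_{\mathrm{ad}}(I_i)$, some summand $\left(\rest{\pi_{I_i}}{J_{j_0}}\right)\circ\left(\rest{\pi_{J_{j_0}}}{I_i}\right)$ is a unit. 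Hence $p:=\rest{\pi_{J_{j_0}}}{I_i}$ is a split monomorphism of modules, so its image is a nonzero direct summand of the indecomposable module $J_{j_0}$; therefore $p\colon I_i\to J_{j_0}$ is a module isomorphism. The crucial point is that $p$ is also a Lie algebra homomorphism, being a restriction of a projection, and a bijective Lie homomorphism is a Lie isomorphism, so $I_i\simeq J_{j_0}$ as Lie algebras. One then exchanges $J_{j_0}$ for $I_i$ in the second decomposition and iterates, which yields $r=s$ and the matching bijection $\sigma$.

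The main obstacle is precisely the one just finessed: the category of Lie algebras under direct product is not additive --- the direct sum is a product but not a coproduct, and a sum of Lie algebra homomorphisms need not be a Lie algebra homomorphism --- so one cannot feed Lie algebras directly into the module-theoretic Krull--Schmidt machinery. The whole argument hinges on moving to the adjoint module, where addition of endomorphisms is legitimate and Fitting's lemma together with the exchange argument are available, while keeping track of the fact that the projections producing the exchange are genuine Lie homomorphisms. Verifying this correspondence carefully --- ideals versus submodules, the coincidence of the two notions of indecomposability, and the observation that the isomorphism produced by the exchange is the Lie-homomorphic projection --- is the heart of the proof.
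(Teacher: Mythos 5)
The paper does not prove this proposition at all: it is imported verbatim as Theorem~3.3 of the cited reference \cite{fisher2013automorphisms}, so there is no in-paper argument to compare against. Your proposal is a correct, self-contained proof, and it follows the standard Krull--Schmidt/Azumaya route: the dictionary between Lie-direct-sum decompositions into ideals and direct-sum decompositions of the adjoint module is accurate (a direct summand ideal is centralised by its complement, so its ad-$\mathfrak{g}$-submodules are exactly its own ideals, which is the point you need for the two notions of indecomposability to agree); the projections are both module maps and Lie homomorphisms for the reason you give; finite dimension gives finite length, hence Fitting's lemma and locality of $\End_{\mathrm{ad}}(I_i)$; and the identity $\Id_{I_i}=\sum_j\bigl(\rest{\pi_{I_i}}{J_j}\bigr)\circ\bigl(\rest{\pi_{J_j}}{I_i}\bigr)$ forces some $\rest{\pi_{J_{j_0}}}{I_i}$ to be a split monomorphism onto a nonzero summand of the indecomposable $J_{j_0}$, hence a bijective Lie homomorphism. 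The only places left at the level of ``standard'' are the final exchange-and-induct step that upgrades ``each $I_i$ is isomorphic to some $J_{j}$'' to a genuine bijection $\sigma$ with $r=s$, and the locality of the endomorphism ring of an indecomposable finite-length module; both are routine and correctly invoked. What your approach buys over the paper's citation is a transparent reduction to module theory that works over any field and makes clear exactly where the Lie structure (as opposed to the linear structure) enters, namely only in checking that the exchanged projection is a Lie morphism.
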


\begin{Prop}\label{Algèbres absolument indécomposables}
 Let $\mathfrak{g}$ be a $\RR \cap \overline{\QQ}$-Lie algebra. Then $\mathfrak{g}(\RR)$ is indecomposable if and only if $\mathfrak{g}$ is indecomposable. 
\end{Prop}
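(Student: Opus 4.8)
The plan is to reinterpret indecomposability through the \emph{centroid} of $\mathfrak{g}$ and then to exploit the fact that $K:=\RR\cap\overline{\QQ}$ is a real closed field sitting inside the real closed field $\RR$. One implication is immediate: if $\mathfrak{g}=\mathfrak{g}_1\oplus\mathfrak{g}_2$ with $\mathfrak{g}_1,\mathfrak{g}_2$ non-trivial, then by Remark~\ref{Remarque algebres de Lie} we get $\mathfrak{g}(\RR)=\mathfrak{g}_1(\RR)\oplus\mathfrak{g}_2(\RR)$, and each factor is non-trivial since $\dim_{\RR}\mathfrak{g}_i(\RR)=\dim_{K}\mathfrak{g}_i\geq 1$. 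Thus $\mathfrak{g}$ decomposable forces $\mathfrak{g}(\RR)$ decomposable, so ``$\mathfrak{g}(\RR)$ indecomposable $\Rightarrow\mathfrak{g}$ indecomposable'' is trivial, and the content is the converse.

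I would introduce the centroid $C(\mathfrak{g})=\{f\in\End_K(\mathfrak{g}): f([x,y])=[f(x),y]\ \text{for all}\ x,y\}$, a finite-dimensional associative $K$-algebra containing $K\cdot\Id$, and establish the standard dictionary between Lie-algebra direct product decompositions of $\mathfrak{g}$ and idempotents of $C(\mathfrak{g})$. The projection attached to a decomposition $\mathfrak{g}=\mathfrak{g}_1\oplus\mathfrak{g}_2$ lies in $C(\mathfrak{g})$ and is idempotent; conversely any idempotent $e\in C(\mathfrak{g})$ yields $\mathfrak{g}=e\mathfrak{g}\oplus(1-e)\mathfrak{g}$ as a sum of ideals with $[e\mathfrak{g},(1-e)\mathfrak{g}]=0$, because $e(1-e)=0$ gives $[ex,(1-e)y]=e(1-e)[x,y]=0$. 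Hence $\mathfrak{g}$ is indecomposable if and only if $C(\mathfrak{g})$ has no idempotent other than $0$ and $1$; call such an algebra \emph{connected}.

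Since $C(\mathfrak{g})$ is the solution space inside $\End_K(\mathfrak{g})$ of the linear conditions $f([x,y])=[f(x),y]$, and extension of scalars along a field is exact, one gets $C(\mathfrak{g}(\RR))=C(\mathfrak{g})\otimes_K\RR$. The proposition therefore reduces to showing that $C(\mathfrak{g})$ connected over $K$ implies $C(\mathfrak{g})\otimes_K\RR$ connected over $\RR$. This is where real-closedness enters: $K=\RR\cap\overline{\QQ}$ is real closed (every positive element has a square root in $K$, and every odd-degree polynomial over $K$ has a root in $K$ by the intermediate value theorem), and $K\subseteq\RR$ with both fields real closed. By the model completeness of the theory of real closed fields, $K$ is an elementary substructure of $\RR$ (written $K\preceq\RR$). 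The property ``the $K$-algebra with structure constants $(c^{k}_{ij})$ has no idempotent besides $0$ and $1$'' is first-order in the language of ordered fields, with parameters the structure constants of $C(\mathfrak{g})$, all of which lie in $K$; hence it holds over $\RR$ exactly when it holds over $K$, which is the preservation of connectedness we need. Equivalently, avoiding model theory: connectedness means $C(\mathfrak{g})/\mathrm{rad}$ is a division algebra, which over the real closed field $K$ is $K$, $K(\sqrt{-1})$ or the quaternion algebra $\mathbb{H}_{K}$ by Frobenius; each stays a division algebra after $\otimes_K\RR$, namely $\RR$, $\CC$ or $\mathbb{H}_{\RR}$, so $C(\mathfrak{g})\otimes_K\RR$ remains connected.

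The main obstacle is precisely this last step, the preservation of connectedness of the centroid under $K\hookrightarrow\RR$; the two supporting inputs (the idempotent dictionary and the compatibility $C(\mathfrak{g}(\RR))=C(\mathfrak{g})\otimes_K\RR$) are routine. It is worth stressing that real-closedness is essential, and that the analogous statement fails for the passage to the algebraic closure $\overline{\QQ}=K(\sqrt{-1})$: a Lie algebra whose centroid has semisimple quotient $K(\sqrt{-1})$ is indecomposable over $K$ yet decomposes over $K(\sqrt{-1})$, since $K(\sqrt{-1})\otimes_K K(\sqrt{-1})\cong K(\sqrt{-1})\times K(\sqrt{-1})$ is disconnected.
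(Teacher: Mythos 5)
Your proposal is correct, but it reaches the conclusion by a different route than the paper, and it is worth recording the comparison. The paper's proof is more direct: it encodes a nontrivial direct-sum decomposition of $\mathfrak{g}(K)$ as a system of polynomial equations in the entries of a pair of complementary projections $A,B$ (idempotent, singular, compatible with the bracket), observes that the coefficients of this system are the structure constants and hence lie in $\RR\cap\overline{\QQ}$, and then invokes the transfer principle for real closed fields (\cite[Proposition 4.1.1]{MR1659509}) to conclude that the system is solvable over $\RR\cap\overline{\QQ}$ if and only if it is solvable over $\RR$. Your first argument --- the dictionary between decompositions and idempotents of the centroid, the base-change identity $C(\mathfrak{g}(\RR))=C(\mathfrak{g})\otimes_K\RR$ (rank of a linear system is insensitive to field extension), and then model completeness of RCF applied to the first-order statement ``there is an idempotent other than $0$ and $1$'' --- is the same transfer mechanism in different packaging; it buys a cleaner separation between the algebra (centroid) and the logic (transfer), at the cost of one extra layer of formalism. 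Your second argument is genuinely different and arguably more illuminating: it eliminates the transfer principle entirely by noting that absence of nontrivial idempotents in a finite-dimensional algebra is equivalent to $C(\mathfrak{g})/\mathrm{rad}$ being a division algebra, classifying those over a real closed field as $K$, $K(\sqrt{-1})$ or $\mathbb{H}_K$ by Frobenius, and checking each survives $\otimes_K\RR$. Two small points you should make explicit there: the radical commutes with the base change $K\hookrightarrow\RR$ (true here because in characteristic zero every finite-dimensional semisimple algebra is separable, so $(A/\mathrm{rad}\,A)\otimes_K\RR$ stays semisimple), and your one-sided centroid condition $f([x,y])=[f(x),y]$ already implies the two-sided one by antisymmetry of the bracket, which is what your computation $[ex,(1-e)y]=e(1-e)[x,y]$ silently uses. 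Your closing remark that the statement fails over $\overline{\QQ}$ correctly identifies why real-closedness (rather than mere algebraicity) is the operative hypothesis, which the paper leaves implicit.
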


\begin{proof}

Let $(e_1,\ldots,e_n)$ be a $\RR \cap \overline{\QQ}$-basis of $\mathfrak{g}$. For any extension $K$ of $\RR \cap \overline{\QQ}$, the $K$-Lie algebra $\mathfrak{g}(K)$ is indecomposable if and only if there are two matrices $A,B \in \mathcal{M}_{n,n}(K)$ such that:
 \begin{equation}\label{equation indecomposable}
\begin{cases}
& A + B = \Id_{\mathfrak{g}(K)} \\
& \det(A)=\det(B)=0\\
& A^2=A \text{ and } B^2=B \\
& A([e_i,e_j]) = [Ae_i,Ae_j], \forall i,j \in \{1,\ldots,n\} \\
& B([e_i,e_j]) = [Ae_i,Be_j], \forall i,j \in \{1,\ldots,n\}
\end{cases}
\end{equation}
Since $(e_1,\ldots,e_n)$ has structure constants in $\RR \cap \overline{\QQ}$ this system is a system of polynomial equations with coefficients in $\RR \cap \overline{\QQ}$. Moreover, $\RR \cap \overline{\QQ} $ and $\RR$ are two real closed fields (\cite[Definition $1.2.1$]{MR1659509}) so (\ref{equation indecomposable}) has a solution in $\RR \cap \overline{\QQ}$ if and only if (\ref{equation indecomposable}) has a solution in $\RR$ according to \cite[Proposition $4.1.1$]{MR1659509}.
\end{proof}

\begin{proof}[Proof of Theorem \ref{Existence d'un réseau approximatif}, necessity.]
 Let $G$ be a simply connected nilpotent Lie group containing an approximate lattice and $\mathfrak{g}$ be its Lie algebra. According to Theorems \ref{Théorème principal} and \ref{CNS existence d'un réseau uniforme} there is a Lie algebra $\mathfrak{h}$ and $(e_1,\ldots,e_n)$ a basis of $\mathfrak{g} \oplus \mathfrak{h}$ such that $(e_1,\ldots,e_n)$ has structure constants in $\QQ$. The $\QQ$-algebra $\mathfrak{k}$ generated by $(e_1,\ldots,e_n)$ is such that $\mathfrak{k}(\RR) \simeq \mathfrak{g} \oplus \mathfrak{h}$. Let $\mathfrak{k}_1,...,\mathfrak{k}_r \subset \mathfrak{k}(\RR\cap\overline{\QQ}) $ be indecomposable $\left(\RR\cap\overline{\QQ}\right)$-Lie subalgebras given by Proposition \ref{Decomposition en indecomposables} such that $$\mathfrak{k}(\RR\cap\overline{\QQ}) = \mathfrak{k}_1\oplus \cdots \oplus \mathfrak{k}_r.$$ 
 By remark \ref{Remarque algebres de Lie},
 $$ \mathfrak{k}_1(\RR)\oplus \cdots \oplus \mathfrak{k}_r(\RR) = \mathfrak{k}(\RR).$$
 Now, there is a set $I \subset \{1,\ldots, r \}$ such that $\mathfrak{g}\simeq \bigoplus\limits_{i \in I} \mathfrak{k}_i\left(\RR\right) $. Indeed, write 
 $$\mathfrak{g}=\bigoplus\limits_{i=1}^sI_i \text{ and } \mathfrak{h}=\bigoplus\limits_{i=1}^{s'}J_i$$
 the decompositions into indecomposable Lie subalgebras given by Proposition \ref{Decomposition en indecomposables}. Then 
 $$ \mathfrak{k}(\RR)= \left(\bigoplus\limits_{i=1}^sI_i\right) \oplus \left(\bigoplus\limits_{i=1}^{s'}J_i\right)$$
 is a decomposition into indecomposable Lie subalgebras. But the $\mathfrak{k}_i(\RR)$'s are indecomposable as well according to Proposition \ref{Algèbres absolument indécomposables}. By Proposition \ref{Decomposition en indecomposables}, there is $I \subset \{1,\ldots, r \}$ such that 
 $$\bigoplus\limits_{i \in I} \mathfrak{k}_i\left(\RR\right)\simeq \bigoplus\limits_{i=1}^sI_i = \mathfrak{g}.$$
 Since the $\mathfrak{k}_i$'s are $\RR\cap\overline{\QQ}$-Lie algebras, any basis of $\bigoplus\limits_{i \in I} \mathfrak{k}_i$ will give rise to a basis of $\mathfrak{g}$ with structure constants in $\RR \cap \overline{\QQ}$ (see Remark \ref{Remarque algebres de Lie}).
 
\end{proof}

\begin{proof}[Proof of Theorem \ref{Existence d'un réseau approximatif}, sufficiency.]
 Let $G$ be such a simply connected nilpotent Lie group. Then $G$ can be seen as the group of real points of an algebraic group $\mathbb{G}$ defined over a number field $K$. Let $Res_{K/\QQ}\mathbb{G}$ denote the Weil restriction of $\mathbb{G}$, then the group of real points $Res_{K/\QQ}\mathbb{G}(\RR)$ is a simply connected nilpotent Lie group, defined over $\QQ$ and isomorphic to a product $G \times H$ (see \cite[Sections $11$ and $12$]{springer2010linear}). As it is defined over $\QQ$, it contains a uniform lattice $\Gamma < G \times H$ according to Malcev's theorem.  
 
 Now let $L$ be the Zariski closure of $\Gamma \cap H$. As $\Gamma \cap H$ is normal in $\Gamma$, $L$ is normal in $H$. The image of $\Gamma$ in $G \times H/L$ is still a uniform lattice so we can assume that $\Gamma \cap H = \{e\}$. Then $(G,\overline{\Gamma},\Gamma)$ is a cut-and-project scheme, so $G$ contains an approximate lattice. 
\end{proof}

\begin{Rem}
 The sufficiency part of Theorem \ref{Existence d'un réseau approximatif} was already mentioned in \cite{bjorklund2016approximate}. 
\end{Rem}



\section*{Acknowledgments} 
This work is part of a Master's Thesis defended in July, 2018. I am deeply grateful to my supervisor, Emmanuel Breuillard, for his guidance and encouragements. 
I would also like to thank Tobias Hartnick for his thorough reading and valuable advice. Finally, I would like to express my very great appreciation to the anonymous reviewer for their many valuable and constructive suggestions.

\bibliographystyle{amsplain}


\begin{dajauthors}
\begin{authorinfo}[sm]
  Simon Machado\\
  Department of Pure Mathematics and Mathematical Statistics\\
  University of Cambridge\\
  Cambridge, UK\\
  sm2344\imageat{}cam\imagedot{}ac\imagedot{}uk \\
\end{authorinfo}
\end{dajauthors}

\end{document}